\theoremstyle{plain}
\theoremstyle{plain}
\newtheorem{theorem}{Theorem}[section]
\newtheorem{lemma}[theorem]{Lemma}
\newtheorem{corollary}[theorem]{Corollary}
\theoremstyle{definition}
\newtheorem{defin}[theorem]{Definition}
\newtheorem{remark}[theorem]{Remark}
\newtheorem{example}[theorem]{Example}
\theoremstyle{remark}
\def\bk{\color{black}}
\numberwithin{equation}{section}
\def\dis{\displaystyle}
\DeclareMathOperator{\R}{\mathbb{R}}
\newcommand{\car}[1]{\raise1pt\hbox{$\chi$}_{#1}}
\newcommand{\DM }{\mathcal{DM}^\infty }
\def\re{\mathbb{R}}
\newcommand{\res}{\!\!\mathop{\hbox{
			\vrule height 7pt width .5pt depth 0pt
			\vrule height .5pt width 6pt depth 0pt}}
	\nolimits}
\begin{document}
	\title[The elliptic nonlinear transparent media equation]{Existence and regularity of solutions for the elliptic \\  nonlinear transparent media equation}

	\author[F. Balducci]{Francesco Balducci}
	\address{Francesco Balducci
		\hfill \break\indent
		Dipartimento di Scienze di Base e Applicate per l' Ingegneria, Sapienza Universit\`a di Roma
		\hfill \break\indent
		Via Scarpa 16, 00161 Roma, Italy}
	\email{\tt francesco.balducci@uniroma1.it}
	\author[F. Oliva]{Francescantonio Oliva}
	\address{Francescantonio Oliva
		\hfill \break\indent
		Dipartimento di Scienze di Base e Applicate per l' Ingegneria, Sapienza Universit\`a di Roma
		\hfill \break\indent
		Via Scarpa 16, 00161 Roma, Italy}
	\email{\tt francescantonio.oliva@uniroma1.it}
	\author[F. Petitta]{Francesco Petitta}
	\address{Francesco Petitta
		\hfill \break\indent
		Dipartimento di Scienze di Base e Applicate per l' Ingegneria, Sapienza Universit\`a di Roma
		\hfill \break\indent
		Via Scarpa 16, 00161 Roma, Italy}
	\email{\tt francesco.petitta@uniroma1.it}
	\author[M. F. Stapenhorst]{Matheus F. Stapenhorst}
	\address{Matheus F. Stapenhorst
		\hfill \break\indent
		Departamento de Matem\'atica e Computação
		Universidade Estadual Paulista,
		\hfill \break\indent
		Rua Roberto Símonsen 305, 19060-900 Presidente Prudente, SP, Brazil}
	\email{\tt m.stapenhorst@unesp.br}
	\makeatletter
	\@namedef{subjclassname@2020}{%
		\textup{2020} Mathematics Subject Classification}
	\makeatother
	
	\keywords{Nonlinear elliptic equations, Total variation, Transparent media equation, Torsion problem} \subjclass[2020]{35J25, 35J60,  35J70, 35D30}

	\begin{abstract}
		In this paper we study existence and regularity of solutions to Dirichlet problems as 
		$$
		\begin{cases}
			\dis -\operatorname{div}\left(|u|^m\frac{D u}{|D u|}\right) = f & \text{in}\;\Omega,\\
			u=0 & \text{on}\;\partial\Omega,
		\end{cases}
		$$
		where $\Omega$  is an open bounded subset of $\R^N$ ($N\ge 2$) with Lipschitz boundary, $m>0$, and $f$ belongs to the Lorentz space $L^{N,\infty}(\Omega)$. In particular,  we explore the regularizing effect given by the degenerate coefficient $|u|^m$ in order to get  non-trivial and bounded solutions with no smallness assumptions on the size of the data.   \end{abstract}
	
	\maketitle
	\tableofcontents
	\section{Introduction}

	Consider the Dirichlet problem 
	\begin{equation}\label{prob}
		\begin{cases}
			\dis - \operatorname{div}\left(|u|^m\frac{Du}{|Du|}\right)=f & \text{in $\Omega$,}\\
			u=0 & \text{on $\partial \Omega$},
		\end{cases}
	\end{equation}
	where $\Omega\subset\mathbb{R}^N$ ($N\geq 2$) is a bounded domain with Lipschitz boundary, $m>0$,  and $f\in L^{N,\infty}(\Omega)$. 
	
	\medskip
	
	The class of problems as  in \eqref{prob} when $m=0$ (i.e. the case of the $1$-Laplacian)
	\begin{equation}\label{1La}
		\begin{cases}
			\dis - \operatorname{div}\left(\frac{Du}{|Du|}\right)=f & \text{in $\Omega$,}\\
			u=0 & \text{on $\partial \Omega$},
		\end{cases}
	\end{equation}
	has been widely studied in a series of papers \cite{K, CT,Demengel2005, MST1, MST2009} as an outcome  study of the asymptotic behaviour of the solutions to the problem
	\begin{equation}
		\label{PLa} 
		\begin{cases}
			-\Delta_p u =f&\quad \mbox{ in } \Omega,\\
			u=  0& \quad \mbox{ on }\partial\Omega,
		\end{cases}
	\end{equation} 
	as $p\to 1^+$ whenever the norm of $f$ is small. In \cite{K}, for instance,   the author studied the existence of solutions for case $f=1$ provided $\Omega$ has a  suitably small size. In particular, it is shown that the variational problem associated to  \eqref{PLa}    could admit  a non-trivial minimizer. 
	Most notably, it is known that solutions $u_{p} $ of the problem \eqref{PLa}
	converge to zero as $p\to1^+$ if the norm $\|f\|_{L^{N,\infty}(\Omega)}$  is small  while they blow up if the same norm is sufficiently large, see for example \cite{CT}.

	Furthermore, in \cite{MST1}, the authors demonstrated that the limit $u$ (whether trivial or non-trivial) of the solutions $u_p$ to \eqref{PLa} is a solution to \eqref{1La}, as previously defined in \cite{acm2001, AndreuMazonMollCaselles2004, AndreuCasellesMazon2004}. They also highlighted that the smallness of the norm of the datum plays a critical role in ensuring, in certain special cases, the existence of a non-trivial solution.

	Moreover,  the $L^1$ data case, even in presence of lower order terms, has also been dealt with (see for instance \cite{MST2009,  lops}). \bk Observe that solutions of problems as in \eqref{1La} are naturally set in the space of functions of bounded variation and they could admit a non-trivial jump part (see for instance \cite{K, MST1}).

	\medskip 
	Problems as in \eqref{prob} but in presence of an absorption zero order term and nonnegative data as
	\begin{equation}\label{gmp}
		\begin{cases}
			u - \dis \operatorname{div}\left(u^m\frac{Du}{|Du|}\right)=f & \text{in $\Omega$,}\\
			u=0 & \text{on $\partial \Omega$},
		\end{cases}
	\end{equation}
	have been  considered    in \cite{GMP} for   $0\leq f\in L^{\infty}(\Omega)$ (with any size),  motivated by the study of the resolvent equation of the associated evolution problem.  The fact that no restrictions are needed on the size of the datum to deduce existence of solutions is an easy consequence  of  the absorption character of the lower order term. In fact, even for $m=0$ (see for instance \cite{dass,O, OPS}) the existence of  a bounded solution of problem \eqref{gmp} can be proven without  any  smallness hypothesis on $f\in L^N(\Omega)$.

	This result is sharp for slightly more general data $f \in L^{N,\infty}(\Omega)$, although in this case, solutions of \eqref{gmp} are not expected to be bounded unless certain smallness assumptions on $\|f\|_{L^{N,\infty}(\Omega)}$ are imposed.

	\medskip 	
	Problems involving the $1$-Laplace operator as in \eqref{1La}   naturally appear in a variety of physical applications as for instance in image processing, 	but also   in the study of torsional creep of  a cylindrical bar of constant cross section in $\re^2$;  for an   account on these and further applications one may refer  to \cite{ROF, OsSe, K, ka},  but also to \cite{Sapiro, M, BCRS}, and to   the monograph \cite{AndreuCasellesMazon2004}. 	
	
	\medskip 
	
	Problems   as in \eqref{prob} (or \eqref{gmp} and it associated evolution equation)  enter in the study of the so-called nonlinear heat equation in transparent media ($m\geq1$) and its operator can be shown to be the formal limit of the porous medium relativistic operator 
	\begin{equation}\label{pmrhe}
		\varrho{\rm div } \left(\frac{{|u|^m}\nabla u}{\sqrt{u^2+\varrho^2|\nabla u|^2}}\right), \quad m>1\,,
	\end{equation}
	as the kinematic viscosity $\varrho$ tends to $\infty$ (see \cite{ACMMevo, GMP} and references therein). Equation  \eqref{pmrhe} was introduced in \cite{ros} in order to study heat diffusion in neutral gases for $m=\frac{3}{2}$.

	\medskip	
	A  further motivation comes from \cite{AV}  where the authors  pointed out  that the differential operator in \eqref{prob} is the formal limit as $p\to 1^+$ of the one of the porous medium equation in the pseudo-linear  regime, i.e., say $u\geq 0$ for simplicity, one has  
	$$
	\Delta_p u^{\frac{1}{p-1}} \to \operatorname{div}\left(u\frac{Du}{|Du|}\right),\ \ \text{as}\ \ p\to1^{+}, 
	$$
	and more in general
	$$
	\Delta_p u^{\frac{m}{p-1}} \to \operatorname{div}\left(u^m\frac{Du}{|Du|}\right),\ \ \text{as}\ \ p\to1^{+}, 
	$$
	for any $m>0$.
	
	\medskip 
	
	A further motivation in the study of problems as \eqref{prob} comes from  the connection with some 1-Laplace type   problems having natural growth in   the lower order term of order one.  
	Indeed, for a nonnegative smooth $u$ one formally has
	\begin{equation}\label{equ comp}
		\begin{aligned}
			-\operatorname{div}\left(u^{m}\frac{D u}{|D u|}\right)&=-\sum_{i}\frac{\partial}{\partial x_i}\left(\frac{u^{m}u_{x_i}}{|D u|}\right)\\
			&=-\frac{m}{|D u|}\sum_{i}u^{m-1}u_{x_i}^2-u^{m}\sum_{i}\frac{\partial}{\partial x_{i}}\left(\frac{u_{x_i}}{|D u|}\right)\\
			&=-mu^{m-1}|D u|-u^{m}\Delta_1 u
		\end{aligned}
	\end{equation}
	which gives that 
	$$
	-\operatorname{div}\left(u^{m}\frac{D u}{|D u|}\right)=f,$${ is formally equivalent to }$$-\Delta_{1}u= m \frac{|D u|}{u}+\frac{f}{u^m}. 
	$$

	Hence, the equation we focus on in this paper  formally  represents a borderline, and new,  case  of the  singular elliptic equations involving the 1-Laplace operator and  natural growth   gradient terms studied in  \cite{GOP} where the authors considered boundary value problems governed by 
	$$
	-\Delta_{1}u= m\frac{|D u|}{u^\theta}+\frac{f}{u^m}, 
	$$ \bk 
	with $0<\theta<1$ and $m> 0$. 
	
	\medskip 
	
	The goal of our work consists in studying the  existence and the qualitative properties of  solutions to problems as in \eqref{prob} under minimal assumptions on the data. Of particular interest with respect to the case of the $1$-Laplacian is the investigation of the regularizing effects provided by the nonlinear coefficient $|u|^m$.  
	
	\smallskip
	
	We shall see that a  bounded non-trivial  solution to \eqref{prob} does exist for any $f\in L^{N,\infty}(\Omega)$ no matter  the size of $\|f\|_{L^{N,\infty}(\Omega)}$ (to be compared with the mentioned results of \cite{K, CT, MST1}). 
	
	\smallskip
	
	As already mentioned, in many known results the regularizing effect on the existence and smoothness of solutions for problems as in \eqref{prob} are driven by the presence of suitable lower order absorption terms (see   \cite{GMP,DGOP,lops,O, OPS} and references therein).  In the present work instead we exploit the sole effect of the coefficient $|u|^m$ in order to avoid the request of any smallness assumptions on the data. Moreover, using an idea of \cite{GMP} we are able to prove that solutions  of \eqref{prob} (or suitable truncations of them) are globally in  $BV(\Omega)$ (here $BV(\Omega)$ denotes the space of functions of bounded variation on $\Omega$) but  without  any  jump part  (we will provide  more precise definitions later); this is a typical  feature that solutions of problems as in \eqref{1La}  usually enjoy  in presence of first order terms with natural growth (see for instance \cite{ads, dass, GOP, BOP}). 
	
	\smallskip
	
	We stress that, through a fine truncation argument,   a  convenient definition of solutions to  problem 
	\eqref{prob} can be given without any sign assumption on the datum $f\in L^{N,\infty}(\Omega)$ (to be compared with the results in \cite{GMP}). We stress that this notion of solution is  inspired by the entropy notion  introduced in \cite{acmNA} (see also \cite{acmJEMS}) in order to deal with general  flux-saturated operators in $\re^N$ and in \cite{acmMA} for the associated Dirichlet problem (see also the survey \cite{calvo} for further details on the subject). 
	With this definition at hand we shall  prove existence of a non-trivial solution without jump part under minimal hypothesis on the data.

	\smallskip

	In this case it is noteworthy to deal with the appropriate definition of the sets $\{u>0\}\cap\partial\Omega$
	and $\{u<0\}\cap\partial\Omega$ even for functions that, a priori, do not admit any trace in the classical sense (see Definition \ref{defin set bd} below) in order to give sense to the homogeneous boundary datum. 
	
	\medskip 
	
	Our general strategy, in order to show that problem \eqref{prob} has a non-trivial solution,  could be briefly summarized as follows:  first, we obtain a sequence $u_{\varepsilon}$ in $H_{0}^{1}(\Omega)$ of solutions of the auxiliary problem
	\begin{equation} \label{e.problemapprox2}
		\begin{cases}
			\dis -\operatorname{div}\left(|u_{\varepsilon}|^{m}\frac{\nabla u_{\varepsilon}}{|\nabla u_{\varepsilon}|_\varepsilon}+\varepsilon \nabla u_{\varepsilon}\right)= f &\text{in $\Omega$,} \\
			u_{\varepsilon} =  0 &\text{ on 
				$\partial \Omega$},
		\end{cases}
	\end{equation}
	where
	$$
	|\xi|_{\varepsilon}=\sqrt{|\xi|^2+\varepsilon^2}\text{ for all }\xi\in \mathbb{R}^{N}.
	$$
	Next we show that the sequence $u_{\varepsilon}$ is uniformly bounded in $L^{\infty}(\Omega)$ and that the sequence $|u_{\varepsilon}|^{m+1} $ is uniformly bounded in $BV(\Omega)$ with respect to $\varepsilon$. 	
	As a consequence of the fact that $BV(\Omega)$ is compactly embedded in $L^{r}(\Omega)$ for all $1\leq r<\frac{N}{N-1}$, we can detect  a function $u$ such that, up to a subsequence,
	$$
	u_{\varepsilon}\to u\text{ in }L^{r}(\Omega)\text{ for all }1\leq r<\frac{N}{N-1}(m+1).
	$$
	Finally we pass to the limit  in \eqref{e.problemapprox2}   by showing that $u$ is a solution of problem \eqref{prob} in a suitable  sense (see Definition \ref{def sol changing}). As we said, our results  should  be compared with other ones in the literature.  In this work, it is worth mentioning that, even without any assumption on the size of $\|f\|_{L^{N,\infty}(\Omega)}$, we still reach a non-trivial and finite   solution. \bk  
	
	\medskip
	
	Let us outline the structure of the paper. 
	In Section \ref{s.prelim}, we give some necessary preliminary definitions and we briefly  summarize the Anzellotti-Chen-Frid  pairing theory. 
	In Section \ref{s.existence} we introduce our notion of solution to problem \eqref{prob} and, for the sake of exposition,  we state our main result in case of a  nonnegative datum $f$.
	Section \ref{s.approxsolutions} is devoted to basic a priori estimates on the sequence  $u_{\varepsilon}$ of problem \eqref{e.problemapprox2} that, furthermore,  are shown to be  uniformly bounded in $L^{\infty}(\Omega)$. We also obtain an estimate in $BV(\Omega)$ for a suitable power of the solution and we detect a limit function $u$.
	In Section \ref{s.proof}, we pass to the limit in \eqref{e.problemapprox2} and we show that problem \eqref{prob} has a solution for a nonnegative $f$.
	Section \ref{s.signchanging} contains  a (technical) extension of the previous  results to the case of a possible  sign changing datum $f$ in $L^{N,\infty}(\Omega)$.  Finally,  in order to illustrate our results,  in Section \ref{sec7} we construct some explicit examples   including the solution to the torsion problem associated to the transparent media equation.  
	
	\section{Preliminary tools}\label{s.prelim}
	\subsection{Basic notation}
	Let $\Omega \subset \mathbb{R}^N$ ($N \ge 2$) be a bounded open set with Lipschitz continuous boundary. We denote  $\mathcal{H}^{N-1}(\Gamma)$ \bk as the  measure of  an  $(N-1)$-dimensional set    $\Gamma$, while $|E|$ indicates the $N$-dimensional Lebesgue measure of $E$.
	
	The space $\mathcal{M}(\Omega)$ is the set of Radon measures with finite total variation on $\Omega$.
	\\Let us introduce the following truncation function, for $-\infty \leq a < b \leq \infty$:
	$$T_a^b(s):=\max(\min(b,s),a).$$
	In the particular case  $a= -b$ with $b>0$ we will use the standard truncating function:
	\begin{equation}\label{troncata}
		T_b(s):=\max(\min(b,s),-b).
	\end{equation}
	Moreover, we will also use for every $k>0$:
	\begin{equation}\label{tronc 2}
		G_k(s):=s-T_k(s).
	\end{equation} 
	For the sake of simplicity, and to avoid ambiguity, we will often use the following agreement:
	$$\int_{\Omega} f := \int_{\Omega} f(x) \, \ensuremath{ d}x.$$
	If not otherwise specified, we denote by $C$ several positive constants whose values may change from line to line and, occasionally, within the same line. These values will depend only on the   data but never   on the indices of the sequences we introduce over time. Also, for simplicity, we will not relabel an extracted compact subsequence when no confusion is possible.
	
	\subsection{Lorentz spaces} The Lorentz space $L^{p,q}(\Omega)$ is defined as the space of measurable functions $u$ such that the quantity
	\begin{equation*}\label{deflorentz}
		\|u\|_{L^{p,q}(\Omega)}=\dis \begin{cases}
			\dis \left(\int_{0}^{\infty}[t^{1/p}u^{*}(t)]^q\frac{\,dt}{t}\right)^{\frac{1}{q}} & 1<p<\infty,\,1<q<\infty, \\
			\dis \sup_{t>0} t^{\frac{1}{p}}u^{*}(t) & 1<p<\infty,\,q=\infty,
		\end{cases}
	\end{equation*}
	is finite. Here, $u^{*}(t)$ denotes the decreasing rearrangement of $u$, given by
	$$
	u^{*}(s)=\sup\{t>0: |\{|u|>t\}|>s\}.
	$$
	We recall that $L^{p,p}(\Omega)=L^{p}(\Omega)$. Moreover, for all $1<q<p<s<\infty$, the following inclusions hold:
	$$
	L^{s}(\Omega)\subset L^{p,1}(\Omega)\subset L^{p,q}(\Omega)\subset  L^{p}(\Omega)\subset L^{p,s}(\Omega)\subset L^{p,\infty}(\Omega)\subset L^{q}(\Omega).
	$$
	A Sobolev type inequality in Lorentz spaces holds; in particular there exists a constant $\widetilde{\mathcal{S}}_1>0$ such that
	\begin{equation}\label{des lorentz}
		\|u\|_{L^{1^*,1}(\Omega)} \le \widetilde{\mathcal{S}}_1 \|\nabla u\|_{L^1(\Omega)}\text{ for every }u\in W_{0}^{1,1}(\Omega),
	\end{equation}
	where $1^*=\frac{N}{N-1}$ stands for  the standard Sobolev conjugate exponent. 
	
	\medskip 
	
	Let us also mention that, for all $1\leq p_{1},p_{2}<\infty$ and $1\leq q_{1},q_{2}\leq\infty$ with $\frac{1}{p_1}+\frac{1}{p_2}=\frac{1}{q_1}+\frac{1}{q_2}=1$, a Young inequality applies in Lorentz spaces, i.e.  
	\begin{equation}\label{holder lor}
		\|fg\|_{L^{1}(\Omega)}\leq C\|f\|_{L^{p_1,q_1}(\Omega)}\|g\|_{L^{p_2,q_2}(\Omega)}.
	\end{equation}
	For more details see \cite{AL, ON, Z}.
	\subsection{BV and TBV spaces}
	For an introduction on  $BV$  spaces we refer to \cite{AFP} from where most of our notations are taken. Briefly, we recall that the set $BV(\Omega)$ consists   of those  functions $u \in L^1(\Omega)$ whose distributional gradient is in $\mathcal{M}(\Omega)^N$, and  it  is a Banach space  endowed with the norm:  $$\|u\|_{BV(\Omega)} =\int_{\partial\Omega} |u| \, d\mathcal{H}^{N-1} + \int_{\Omega} |Du|,$$ where $\int_\Omega |Du|$ denotes the total variation of the measure $Du$ over $\Omega$, i.e. 
	\begin{equation*}
		\label{TV}
		\int_\Omega |D u|=\sup\left\{\int_{\Omega}u\,\mbox{div}\,\phi : \phi\in C^1_c(\Omega,\mathbb{R}^N),\quad\|\phi\|_{L^\infty(\Omega)^N}\leq 1\right\}.
	\end{equation*}
	We also recall that an equivalent norm for $BV(\Omega)$ is given by
	$$\||u|\|_{BV(\Omega)} =\int_{\Omega} |u|   + \int_{\Omega} |Du|.$$ 
	Recall that, for $u\in L^1(\Omega)$, $u$ has an approximate limit at $x\in \Omega$ if there exists $\widetilde{u}(x)$ such that 
	$$
	\lim_{\rho\downarrow 0} \fint_{B_{\rho}(x)}|u(y)-\widetilde{u}(x))|\,dy=0,
	$$
	where $\fint_{E}f=\frac{1}{|E|}\int_{E}f$; such points are called Lebesgue points of $u$ and the set of these points is denoted by $L_u$. The set where this property does not hold is denoted by $S_u$. This is a $\mathcal{L}^N-$negligible Borel set \cite[Proposition 3.64]{AFP} . We say that $x$ is an approximate jump point of $u$ if there exists $u^+(x)\neq u^{-}(x)$ and $ \nu\in S^{N-1}$  such that
	\begin{eqnarray*}
		\lim_{\rho \downarrow 0}\fint_{B^+_\rho(x, \nu) }|u(y)-u^+(x)|\, dy&=&0,\\
		\lim_{\rho \downarrow 0}\fint_{B^{-}_\rho(x, \nu) }|u(y)-u^{-}(x)|\, dy&=&0,
	\end{eqnarray*}
	where
	\begin{eqnarray*}
		B^+_\rho (x, \nu)&=&\{y\in B_\rho(x)\>:\> \langle y-x, \nu\rangle >0\},\\
		B^{-} _\rho (x, \nu)&=&\{y\in B_\rho(x)\>:\> \langle y-x, \nu\rangle <0\}.
	\end{eqnarray*}
	The set of approximate jump points is denoted by $J_u$. The set $J_u$ is a Borel subset of $S_u$ \cite[Proposition 3.69]{AFP} and $\mathcal{H}^{N-1}(S_u \setminus J_u) = 0,$ if $u \in BV(\Omega)$. Moreover, up to a $(N-1)$-negligible set $J_u$ is an $\mathcal{H}^{N-1}$-  rectifiable set and an orientation $\nu_u (x)$ is defined for $\mathcal{H}^{N-1}$-a.e. $x \in J_u$. 
	
	For $u\in L^1(\Omega)$, $u^*\>:\>\Omega\setminus(S_u\setminus J_u)\to \mathbb{R}$ is called the precise representative of $u$ if 
	\begin{equation*}
		\label{CanRep}
		u^*(x)=
		\left\{
		\begin{array}{lcr}
			\widetilde{u}(x)&\mbox{ if }& x\in \Omega\setminus S_u,\\
			\dis \frac{u^+(x)+u^{-}(x)}{2}& \mbox{ if }& x\in J_u.
		\end{array}
		\right.
	\end{equation*}
	Let us stress that any $u \in BV(\Omega)$ can be characterized by its precise representative $u^*$, which coincides with its Lebesgue representative on $L_u$ and that $u^*$ is well-defined $\mathcal{H}^{N-1}$-a.e. because the set $S_u \setminus J_u$ is $\mathcal{H}^{N-1}$-negligible.
	
	\medskip 
	For $u\in BV(\Omega)$, we write
	$$
	Du=D^{a}u+D^{j}u+D^{c}u,
	$$
	where $D^a, D^j$ and $D^{c}$ denote respectively the Lebesgue, the jump and the Cantor parts of $Du$. This decomposition is made up of mutually  orthogonal parts. Moreover, sets of finite $\mathcal{H}^{N-1}$ measure are read only by the jump part $D^{j} u$, i.e., $(D^{a}u)(E)=(D^{c}u)(E)=0$ for all $\mathcal{H}^{N-1}$ measurable sets $E$ with $\mathcal{H}^{N-1}(E)<\infty$. 
	
	Especially, when $D^j u =0$, it means that $\mathcal{H}^{N-1}(J_u)=0$ or, equivalently, that $Du= \widetilde{D}u$ where $\widetilde{D}u=D^a u + D^c u$.
	Let us specify we will write $u$ instead of $u^*$ when we integrate against a measure absolutely continuous with respect to $\mathcal{H}^{N-1}$ as no ambiguity is possible.
	
	\medskip 
	
	 Let us also denote by 
	$$DBV(\Omega):=\{u \in BV(\Omega) : D^j u=0\}.$$ 
	
	\medskip
	
	Now let us recall some  weak lower semicontinuity properties in $BV(\Omega)$ (see  \cite[Proposition 3.6]{AFP}).  
	If one  considers a sequence $u_n \in BV(\Omega)$ such that $u_n \to u$ strongly in $L^1(\Omega)$ with $u \in BV(\Omega)$. Then \begin{equation}\label{sci BV}
		\int_\Omega |Du| \varphi + \int_{\partial\Omega} |u| \varphi\, d\mathcal{H}^{N-1} \le \liminf_{n \to \infty} \int_\Omega |Du_n|\varphi +\int_{\partial\Omega} |u_n| \varphi \, d\mathcal{H}^{N-1}  \quad \text{for all $ 0 \le \varphi \in C^1(\overline{\Omega})$,}
	\end{equation}  
	notice in particular that \eqref{sci BV} holds with $\varphi=1$, and that 
	\begin{equation}\label{sci BV0}
		\int_\Omega |Du| \varphi   \le \liminf_{n \to \infty} \int_\Omega |Du_n|\varphi   \quad \text{for all  $0 \le\varphi \in C^1_c (\Omega)$.}
	\end{equation}  
	We also  recall the Sobolev embedding $BV(\Omega)\hookrightarrow L^{1^*}(\Omega)$, that is 
	\begin{equation}\label{e.Lstar}
		\|u\|_{L^{1^*}(\Omega)} \le \mathcal{S}_1 \|u\|_{BV(\Omega)}\text{ for every }u\in BV(\Omega),
	\end{equation}
	where  $\mathcal{S}_1>0$ is the best constant of this embedding. We further remark that the embedding $BV(\Omega)\hookrightarrow L^{r}(\Omega)$ with $1\leq r<1^*$ is compact (for more details \cite[Theorem 3.23]{AFP}). 
	
	\medskip 
	
	Let us recall the chain rule formula for functions in $BV(\Omega)$ (for instance see \cite[Theorem 3.99]{AFP}).
	\begin{theorem}\label{t chain rule}
		Let  $u \in BV(\Omega)$ and let $\Phi : \mathbb{R} \to \mathbb{R}$ be a Lipschitz function. Then $v=\Phi(u) \in BV(\Omega)$ and it holds 
		\begin{equation*}
			Dv=\Phi'(\widetilde{u})\widetilde{D}u + \left(\Phi(u^+)-\Phi(u^-)\right)\nu_u \mathcal{H}^{N-1}\res J_u.
		\end{equation*}
		Moreover if $D^j u =0$ then 
		\begin{equation}\label{chain rule senza j}
			\widetilde{D}v= \Phi'(\widetilde{u}) \widetilde{D}u.
		\end{equation}
	\end{theorem}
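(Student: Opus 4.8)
The plan is to follow the classical route for chain rules in $BV$ (see \cite{AFP}): first show that $v=\Phi(u)\in BV(\Omega)$, then reduce the general Lipschitz case to the case $\Phi\in C^1$, and finally prove the formula for $\Phi\in C^1$ by treating separately the absolutely continuous, jump and Cantor parts of $Dv$. For the membership, let $u_\varepsilon=u\ast\rho_\varepsilon$ be a standard mollification: then $u_\varepsilon\to u$ in $L^1_{\mathrm{loc}}(\Omega)$ and a.e., hence $\Phi(u_\varepsilon)\to\Phi(u)$ in $L^1_{\mathrm{loc}}(\Omega)$ (note $|\Phi(s)|\le|\Phi(0)|+\mathrm{Lip}(\Phi)|s|$, so $\Phi(u)\in L^1(\Omega)$ since $\Omega$ is bounded). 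The classical chain rule gives $\nabla(\Phi(u_\varepsilon))=\Phi'(u_\varepsilon)\nabla u_\varepsilon$, so that on every open set $\Omega'$ compactly contained in $\Omega$ one has $\int_{\Omega'}|\nabla(\Phi(u_\varepsilon))|\le\mathrm{Lip}(\Phi)\int_{\Omega'}|\nabla u_\varepsilon|\le\mathrm{Lip}(\Phi)\,|Du|(\Omega)$; by lower semicontinuity of the total variation, $v\in BV(\Omega)$ and $|Dv|\le\mathrm{Lip}(\Phi)\,|Du|$ as measures.

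Granting the $C^1$ case (proved below), one reduces the Lipschitz case by taking $\Phi_k=\Phi\ast\rho_{1/k}\in C^\infty$, so that $\mathrm{Lip}(\Phi_k)\le\mathrm{Lip}(\Phi)$, $\Phi_k\to\Phi$ locally uniformly, and $\Phi_k'\to\Phi'$ $\mathcal{L}^1$-a.e.\ and boundedly; since $v_k:=\Phi_k(u)\to v$ in $L^1(\Omega)$, it is enough to pass to the limit in $Dv_k=\Phi_k'(\widetilde{u})\widetilde{D}u+(\Phi_k(u^+)-\Phi_k(u^-))\nu_u\mathcal{H}^{N-1}\res J_u$. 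The jump term converges in total variation by dominated convergence, using $|\Phi_k(u^+)-\Phi_k(u^-)|\le\mathrm{Lip}(\Phi)|u^+-u^-|$ and the $\sigma$-finiteness of $\mathcal{H}^{N-1}\res J_u$. For the diffuse term one invokes that $\widetilde{D}u$ charges no level set of zero length, i.e.\ $|\widetilde{D}u|\bigl(\widetilde{u}^{-1}(B)\bigr)=0$ whenever $\mathcal{L}^1(B)=0$ — a consequence of the coarea formula for $BV$ functions, the absolutely continuous part being handled by the area formula — so that $\Phi_k'(\widetilde{u})\to\Phi'(\widetilde{u})$ $|\widetilde{D}u|$-a.e., and dominated convergence applies again.

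It remains to prove the formula for $\Phi\in C^1$ (with bounded derivative, say). Writing $Dv=D^av+D^jv+D^cv$: the absolutely continuous part follows from the $\mathcal{L}^N$-a.e.\ approximate differentiability of $u$, with approximate gradient the density $\nabla u$ of $D^au$, and the classical chain rule at such points, yielding $D^av=\Phi'(\widetilde{u})\nabla u\,\mathcal{L}^N=\Phi'(\widetilde{u})D^au$. For the jump part, continuity of $\Phi$ guarantees that at $\mathcal{H}^{N-1}$-a.e.\ point of $J_u$ the function $v$ has one-sided approximate limits $\Phi(u^+),\Phi(u^-)$ along the same direction $\nu_u$, whence $J_v=\{x\in J_u:\Phi(u^+(x))\neq\Phi(u^-(x))\}$ and $D^jv=(\Phi(u^+)-\Phi(u^-))\nu_u\,\mathcal{H}^{N-1}\res J_u$, with the convention that the integrand vanishes on $J_u\setminus J_v$. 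The identity $D^cv=\Phi'(\widetilde{u})D^cu$ is the crux: one obtains it by slicing, restricting $u$ to $\mathcal{H}^{N-1}$-a.e.\ line parallel to a coordinate direction — where the one-dimensional chain rule $D^c(\Phi\circ w)=\Phi'(\widetilde{w})D^cw$ for $w\in BV$ of a real interval holds — and then reassembling via the one-dimensional characterisation of the components of $Dv$; alternatively, one argues directly by a blow-up at $|D^cu|$-a.e.\ point, recognising there a monotone one-dimensional profile of $u$ composed with $\Phi$. The last assertion is then immediate: if $D^ju=0$ then $\mathcal{H}^{N-1}(J_u)=0$, the jump term disappears, and $Dv$ has no jump part, so $\widetilde{D}v=Dv=\Phi'(\widetilde{u})\widetilde{D}u$.

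The step I expect to be the main obstacle is the Cantor part in the last paragraph, together with the ``no charge on null level sets'' property used in the reduction: these carry the genuinely $BV$ content (the coarea formula and the one-dimensional restriction theory), whereas the other parts reduce to standard $W^{1,1}$-type computations or to soft arguments on traces at jump points.
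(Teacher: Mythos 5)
This statement is not proved in the paper at all: it is recalled verbatim as a classical result with a pointer to \cite[Theorem 3.99]{AFP}, so there is no in-paper argument to compare yours against. Your proposal is a correct outline of the standard proof from that reference, and the logical skeleton is sound: membership in $BV$ via mollification of $u$ and lower semicontinuity; reduction of the Lipschitz case to the $C^1$ case by mollifying $\Phi$, with the jump term handled by dominated convergence against $\mathcal{H}^{N-1}\res J_u$ and the diffuse term requiring that $|\widetilde{D}u|\bigl(\widetilde{u}^{-1}(B)\bigr)=0$ for $\mathcal{L}^1$-null $B$; and, for $\Phi\in C^1$, the identification of $D^a v$ by approximate differentiability, of $D^j v$ by one-sided approximate limits, and of $D^c v$ by slicing or blow-up. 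The deduction of \eqref{chain rule senza j} from the general formula is also correct: $D^j u=0$ forces $\mathcal{H}^{N-1}(J_u)=0$, the jump term vanishes, and the remaining measure is diffuse.

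The only caveat is one you flag yourself: the two steps carrying the real $BV$ content — the one-dimensional Cantor-part chain rule $D^c(\Phi\circ w)=\Phi'(\widetilde{w})D^c w$ that you feed into the slicing argument, and the ``no charge on preimages of null sets'' property of $\widetilde{D}u$ — are invoked rather than proved. As written, your text is therefore a proof outline at roughly the same level of detail as a textbook reduction, not a self-contained proof; to close it one would prove the level-set property from the coarea formula (for $|Du|$, then refined to the diffuse part) and the one-dimensional Cantor case via, e.g., the Banach indicatrix or a monotone-rearrangement blow-up. Since the paper itself treats the theorem as a citation, this level of completeness is appropriate, but be aware that the Cantor step is exactly where a naive ``approximate by smooth functions'' argument fails and where the argument cannot be soft.
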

	For our purposes,  as in \cite{GMP}, from which we mainly derive the following notation,  let us also  introduce the space of functions whose suitable truncations belong to $BV(\Omega)$, that is 
	$$
	TBV(\Omega):=\{u\in L^{1}(\Omega):F(u^{+}), F(u^{-})\in BV(\Omega) \text{ for all $a>0$, $F \in W_{a}^{1,\infty}$} \},
	$$	
	where $u^{+}=\max\{u,0\}$ and $u^{-}=\max\{-u,0\}$ and
	$$
	W_{a}^{1,\infty}=W^{1, \infty}([0, \infty); [a, \infty)).
	$$
	In particular, $TBV(\Omega)$ may be equivalently defined as 
	$$
	TBV(\Omega):=\{u\in L^{1}(\Omega): \, T_a^b(u), T_{-b}^{-a}(u)\in BV(\Omega), \quad \text{for all $0<a<b \le \infty$}\}.
	$$ 
	For more details see \cite[Remark 4.27]{AFP}. It is known that  nonnegative functions in $TBV(\Omega)$ admit a trace, as proven in \cite[Lemma 5.1]{GMP}.
	\begin{lemma}\label{lemma trace pos}
		Let $\Omega$ be a bounded open set with Lipschitz boundary and  $u$ a nonnegative function in $ TBV(\Omega)$.  Then, there exists $u^{\Omega}\in L^{1}(\partial\Omega;[0,\infty))$ such that
		$$
		\lim_{\rho\to0}\fint_{\Omega\cap B_{\rho}(x_0)}|u(x)-u^{\Omega}(x_0)|\,dx=0\text{ for $\mathcal{H}^{N-1}$-a.e $x_0\in\partial\Omega$.}
		$$
		Moreover 
		$$
		u^{\Omega}=\lim_{a\to0^+}(T_{a}^{\infty}(u))^{\Omega}\quad\mathcal{H}^{N-1}\text{-a.e in $\partial\Omega$,}
		$$	 
		and
		$$
		F(u^{\Omega})=(F(u))^{\Omega}\text{ for all } F\in W_{a}^{1,\infty}.
		$$
	\end{lemma}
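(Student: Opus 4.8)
The plan is to deduce the statement from the classical trace theory for $BV$ functions on Lipschitz domains, applied to the truncations $T_a^{\infty}(u)=\max(u,a)$. By the definition of $TBV(\Omega)$ one has $T_a^{\infty}(u)\in BV(\Omega)$ for every $a>0$, so each admits a trace $g_a:=(T_a^{\infty}(u))^{\Omega}\in L^{1}(\partial\Omega;[a,\infty))$ with $\lim_{\rho\to 0}\fint_{\Omega\cap B_{\rho}(x_0)}|T_a^{\infty}(u)-g_a(x_0)|\,dx=0$ for $\mathcal{H}^{N-1}$-a.e.\ $x_0\in\partial\Omega$ (see \cite{AFP}). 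The first step I would carry out is the compatibility relation between these truncations: for $0<a<b$ one has the pointwise identity $T_b^{\infty}(u)=\max(T_a^{\infty}(u),b)$ in $\Omega$, and since $|\Phi(s)-\Phi(t)|\le L|s-t|$ for every $L$-Lipschitz $\Phi$ --- so that $\Phi(v^{\Omega}(x_0))$ automatically satisfies the defining averaged property of the trace of $\Phi(v)$ at $x_0$, i.e.\ the trace commutes with Lipschitz composition --- this yields $g_b=\max(g_a,b)$ $\mathcal{H}^{N-1}$-a.e.\ on $\partial\Omega$. In particular $a\mapsto g_a$ is nondecreasing and $0\le g_a\le g_1$ for $0<a\le 1$.

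Next I would set $u^{\Omega}:=\inf_{a>0}g_a=\lim_{a\to 0^{+}}g_a\ge 0$ (a monotone limit). Since $g_1\in L^{1}(\partial\Omega)$ dominates the family $\{g_a\}_{0<a\le 1}$, dominated convergence gives at once $u^{\Omega}\in L^{1}(\partial\Omega;[0,\infty))$, the convergence $g_a\to u^{\Omega}$ in $L^{1}(\partial\Omega)$ --- which is the announced formula $u^{\Omega}=\lim_{a\to 0^{+}}(T_a^{\infty}(u))^{\Omega}$ --- and, letting $a\to0^{+}$ in $g_b=\max(g_a,b)$, the identity $g_b=\max(u^{\Omega},b)$ for all $b>0$. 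To establish the averaged convergence of $u$ to $u^{\Omega}$ I would fix $a>0$ and a point $x_0$ at which both the trace property of $T_a^{\infty}(u)$ holds and $g_a(x_0)=\max(u^{\Omega}(x_0),a)$; then, since $u\ge 0$ and $u^{\Omega}(x_0)\ge 0$,
$$|u-u^{\Omega}(x_0)|\le (a-u)^{+}+\big|T_a^{\infty}(u)-g_a(x_0)\big|+(a-u^{\Omega}(x_0))^{+}\le a+\big|T_a^{\infty}(u)-g_a(x_0)\big|+a,$$
so averaging over $\Omega\cap B_{\rho}(x_0)$ and letting $\rho\to 0$ (the middle term having vanishing average, by the choice of $x_0$) gives $\limsup_{\rho\to 0}\fint_{\Omega\cap B_{\rho}(x_0)}|u-u^{\Omega}(x_0)|\,dx\le 2a$; the claim follows by sending $a\to 0^{+}$.

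For the last identity I would note that $F(u)\in BV(\Omega)$ for every $F\in W_a^{1,\infty}$, again by the definition of $TBV(\Omega)$, hence $F(u)$ has a trace $(F(u))^{\Omega}\in L^{1}(\partial\Omega)$; since $F$ is Lipschitz and $u\ge 0$, the estimate $\fint_{\Omega\cap B_{\rho}(x_0)}|F(u)-F(u^{\Omega}(x_0))|\,dx\le \mathrm{Lip}(F)\,\fint_{\Omega\cap B_{\rho}(x_0)}|u-u^{\Omega}(x_0)|\,dx\to 0$ identifies $F(u^{\Omega}(x_0))$ as the trace value of $F(u)$ at $x_0$, that is $(F(u))^{\Omega}=F(u^{\Omega})$ $\mathcal{H}^{N-1}$-a.e.\ on $\partial\Omega$.

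The conceptual content is standard $BV$ trace theory, so no step is genuinely hard; the only point deserving care is the bookkeeping of the $\mathcal{H}^{N-1}$-negligible exceptional sets over the continuum of levels $a>0$, which I would settle by running the construction along rational $a$ only (a single countable intersection of null sets), defining $u^{\Omega}$ through the rational limit, and then extending the monotonicity of $a\mapsto g_a$ and the identity $g_a=\max(u^{\Omega},a)$ to all $a>0$. This reproduces the scheme of \cite[Lemma 5.1]{GMP}.
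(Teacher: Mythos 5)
Your proof is correct. The paper does not prove this lemma itself --- it simply cites \cite[Lemma 5.1]{GMP} --- and your argument (truncation traces $g_a=(T_a^\infty(u))^\Omega$, the compatibility $g_b=\max(g_a,b)$ via Lipschitz composition, the monotone limit defining $u^\Omega$ dominated by $g_1$, the $2a$-estimate for the averaged convergence, and the countable-level bookkeeping of the exceptional null sets) is a sound, self-contained reconstruction along exactly the lines of the cited result, so there is nothing in the paper to contrast it with.
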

	We give a notion of trace in the boundary of $\Omega$ for functions in $TBV(\Omega)$. Recall that if $u\in TBV(\Omega)$, then $u^{+}$ and $u^{-}$ admit a trace in the sense of Lemma \ref{lemma trace pos}. 
	\begin{defin}\label{defin set bd}
		Let $u\in TBV(\Omega)$. We define
		$$
		\{u>0\}\cap\partial\Omega=\{(u^{+})^{\Omega}>0\},
		$$
		and
		$$
		\{u<0\}\cap\partial\Omega=\{(u^{-})^{\Omega}>0\}.
		$$
	\end{defin}
	With the above definition in force, one can show the following technical result.
	\begin{lemma}\label{lemma well posed bd}
		Let $u\in TBV(\Omega)$. Then
		$$
		\mathcal{H}^{N-1}\left(\{(u^{+})^{\Omega}>0\}\cap\{(u^{-})^{\Omega}>0\}\right)=0.
		$$	
	\end{lemma}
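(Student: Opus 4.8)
The claim is that for $u\in TBV(\Omega)$ the boundary traces of $u^+$ and $u^-$ cannot both be positive on a set of positive $\mathcal H^{N-1}$ measure. The natural strategy is to argue by contradiction at the level of the approximate continuity property furnished by Lemma \ref{lemma trace pos}. Suppose that $E:=\{(u^+)^\Omega>0\}\cap\{(u^-)^\Omega>0\}$ has $\mathcal H^{N-1}(E)>0$. Since $(u^+)^\Omega,(u^-)^\Omega\in L^1(\partial\Omega;[0,\infty))$, we may choose $\delta>0$ and a subset $E_\delta\subseteq E$ with $\mathcal H^{N-1}(E_\delta)>0$ on which simultaneously $(u^+)^\Omega(x_0)\ge\delta$ and $(u^-)^\Omega(x_0)\ge\delta$. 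Fix such an $x_0$ at which both approximate-limit relations of Lemma \ref{lemma trace pos} hold (this is $\mathcal H^{N-1}$-a.e.\ on $E_\delta$, so we may pick one). Then
\begin{equation*}
\lim_{\rho\to0}\fint_{\Omega\cap B_\rho(x_0)}|u^+(x)-(u^+)^\Omega(x_0)|\,dx=0,\qquad
\lim_{\rho\to0}\fint_{\Omega\cap B_\rho(x_0)}|u^-(x)-(u^-)^\Omega(x_0)|\,dx=0,
\end{equation*}
with both limiting constants $\ge\delta$.

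The key pointwise obstruction is that $u^+$ and $u^-$ have disjoint supports: at every $x$, at least one of $u^+(x)$, $u^-(x)$ vanishes, so $\min\{u^+(x),u^-(x)\}=0$ and hence $|u^+(x)-a|+|u^-(x)-b|\ge a+b$ whenever $a,b\ge0$ — indeed if $u^+(x)=0$ the first term is $a$ and the second is $\ge b-u^-(x)$ but we instead use $|u^+(x)-a|+|u^-(x)-b|\ge |u^+(x)-a|+ (b - u^-(x))$; cleaner: since one of $u^+(x),u^-(x)$ is $0$, we get $|u^+(x)-a|+|u^-(x)-b|\ge \max\{a,b\}\ge\tfrac12(a+b)$. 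Applying this with $a=(u^+)^\Omega(x_0)\ge\delta$, $b=(u^-)^\Omega(x_0)\ge\delta$ and averaging over $\Omega\cap B_\rho(x_0)$ gives
\begin{equation*}
\fint_{\Omega\cap B_\rho(x_0)}|u^+(x)-(u^+)^\Omega(x_0)|\,dx+\fint_{\Omega\cap B_\rho(x_0)}|u^-(x)-(u^-)^\Omega(x_0)|\,dx\ \ge\ \delta
\end{equation*}
for every $\rho$, which contradicts the two limits above being $0$. Hence $\mathcal H^{N-1}(E)=0$.

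The only genuinely delicate point, and the one I would take care with, is ensuring that the averaging set $\Omega\cap B_\rho(x_0)$ is nondegenerate as $\rho\to0$ so that the averages in Lemma \ref{lemma trace pos} and in my estimate are over the same (nonempty, positive-measure) sets — this is exactly where the Lipschitz regularity of $\partial\Omega$ enters, guaranteeing $|\Omega\cap B_\rho(x_0)|\ge c\rho^N$ for $\mathcal H^{N-1}$-a.e.\ $x_0\in\partial\Omega$ and small $\rho$, so that the normalization $\fint_{\Omega\cap B_\rho(x_0)}$ is well-defined and the two displayed limits and my lower bound are literally comparable. With that in hand the contradiction is immediate. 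I expect no other obstacle; the argument is purely a measure-theoretic consequence of the disjointness of the positive and negative parts together with the trace characterization already established.
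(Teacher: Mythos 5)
Your argument is correct in substance and takes a genuinely different route from the paper. The paper's proof works at the level of $BV$ truncations: it sets $v_a=T_a^\infty(u)+T_{-\infty}^{-a}(u)\in BV(\Omega)$, uses additivity of the $BV$ trace together with the identity $(v_a^++v_a^-)^\Omega=\max\{(v_a^+)^\Omega,(v_a^-)^\Omega\}$ (which is where the disjointness of supports enters there), deduces $(v_a^-)^\Omega(x_0)=0$ for small $a$ at any point where $(u^+)^\Omega(x_0)>0$, and then lets $a\to0^+$ via the last part of Lemma \ref{lemma trace pos}. You instead work directly with the Lebesgue-point characterization of the trace from Lemma \ref{lemma trace pos} and exploit the pointwise identity $\min\{u^+(x),u^-(x)\}=0$ inside the averages; this avoids the truncation $v_a$ and the trace identity for the maximum altogether, and is arguably more elementary. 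Both proofs ultimately rest on the same two ingredients (the trace of Lemma \ref{lemma trace pos} and disjointness of the supports of $u^+$ and $u^-$), deployed at different levels.

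One slip to fix: the pointwise inequality $|u^+(x)-a|+|u^-(x)-b|\ge\max\{a,b\}$ is false. If $u^+(x)=0$ and $u^-(x)=b$ with $a<b$, the left-hand side equals $a<\max\{a,b\}$. The correct elementary bound is $|u^+(x)-a|+|u^-(x)-b|\ge\min\{a,b\}$: at a.e.\ $x$ one of $u^+(x),u^-(x)$ vanishes, and in either case the corresponding term alone is $a$ or $b$. Since you apply the bound with $a,b\ge\delta$, you still get $\min\{a,b\}\ge\delta$, so the displayed lower bound $\ge\delta$ for the sum of the two averages survives and the contradiction goes through unchanged. Your point about the nondegeneracy $|\Omega\cap B_\rho(x_0)|\ge c\rho^N$ for Lipschitz $\partial\Omega$ is the right thing to check and is indeed all that is needed to make the averages comparable.
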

	\begin{proof} 
		We begin by observing that $(u^+)^{\Omega}$ is well defined $\mathcal{H}^{N-1}$ a.e.  in $\partial\Omega$. Let $x_0\in\partial\Omega$ be a point such that $(u^+)^{\Omega}(x_0)>0$. From Lemma \ref{lemma trace pos}, there exists $c_0,a_0>0$ such that
		\begin{equation}\label{equ hyp trace}
			(T_{a}^{\infty}(u(x_0)))^{\Omega}>c_{0}\text{ for all }0<a\leq a_0. 
		\end{equation}
		Since $u\in TBV(\Omega)$, we know that the function $v_{a}=T_{a}^{\infty}(u)+ T_{-\infty}^{-a}(u)\in BV(\Omega)$. Thus,
		\begin{equation}\label{equBVtrace}
			(v_a^+)^{\Omega}+(v_a^-)^\Omega=(v_{a}^++v_{a}^{-})^{\Omega}=(\max\{v_{a}^+,v_{a}^-\})^{\Omega}=\max\{(v_a^+)^{\Omega},(v_a^-)^{\Omega}\}\quad\mathcal{H}^{N-1}\text{-a.e }.
		\end{equation}
		Since $v_{a}^{+}=T_{a}^{\infty}(u)-a$, inequality \eqref{equ hyp trace} implies that	
		$$
		(v_a^+(x_0))^{\Omega}=(T_{a}^{\infty}(u(x_0))-a)^{\Omega}=(T_{a}^{\infty}(u(x_0)))^{\Omega}-a>c_{0}-a\text{ for all }0<a\leq a_0.
		$$
		Thus, $(v_a^+(x_0))^{\Omega}>0$ for $0<a<\min\{a_0,c_0\}$. As a consequence, \eqref{equBVtrace} yields $(v_a^-)^{\Omega}(x_0)=0$ for all $0<a<\min\{a_0,c_0\}$. Thus,
		$$
		(v_a^-)^{\Omega}(x_0)=(T_a^{\infty}(u^-)-a)^{\Omega}(x_0)=(T_a^{\infty}(u^-)(x_{0})^{\Omega}-a=0\text{ for all }0<a<\min\{a_0,c_0\}.
		$$
		We conclude that
		$$
		u^{-}(x_{0})^{\Omega}=\lim_{a\to0^+}(T_a^{\infty}(u^-)(x_{0}))^{\Omega}=0.
		$$
		This proves the result.
	\end{proof}	
	
	\medskip

	Given $u\in L^{1}_{\rm loc}(\Omega)$, the upper and the lower approximate limits of $u$ at the point $x\in\Omega$ are defined respectively as
	$$
	u^{\lor}(x):=\inf\{t\in\mathbb{R}:\lim_{\rho \downarrow 0}\rho^{-N}|\{u>t\}\cap B_{\rho}(x)|=0\},
	$$
	$$
	u^{\land}(x):=  \sup \bk\{t\in\mathbb{R}:\lim_{\rho \downarrow 0}\rho^{-N}|\{u<t\}\cap B_{\rho}(x)|=0\}.
	$$
	We let $S_{u}^{*}=\{x\in\Omega: u^{\land}(x)<u^{\lor}(x)\}$ and we  define
	$$
	DTBV^+(\Omega):=\{u\in TBV(\Omega): u\text{ is nonnegative and  }\mathcal{H}^{N-1}(S_{u}^{*})=0\},
	$$	
	and
	$$
	DTBV(\Omega):=\{u\in TBV(\Omega): u^{+},u^{-}\in DTBV^+(\Omega)\}.
	$$	
	The set of weak approximate jump points of a function $u\in L^1_{\rm{loc}}(\Omega)$ is the subset $J_{u}^*$ of $S_{u}^*$ such that there exists a unit vector $\nu^*_u(x) \in \mathbb{R}^N$ such that the weak approximate limit of the restriction of $u$ to the hyperplane $H^+:=\{y\in \Omega: (y-x)\cdot v_{u}^{*}(x)>0 \}$ is $u^\lor(x)$ and the weak approximate limit of the restriction of $u$ to the hyperplane $H^-:=\{y\in \Omega:  (y-x)\cdot v_{u}^{*}(x)<0 \}$ is $u^\land(x)$. Under the assumption that $u\in L^{1}_{\rm{loc}}(\Omega)$, it can be shown that $J_u \subseteq J_u^*$ and
	$$u^\lor(x)=\max\{u^+(x), u^-(x)\}, \quad u^\land(x)=\min\{u^+(x), u^-(x)\}, \quad \nu_u^*(x)=\pm\nu_u(x) \quad \forall x \in J_u,$$ for more details see \cite[p.237]{AFP}. 
	
	Let us state  a peculiar property of  nonnegative functions in $TBV(\Omega)$ (see \cite[Lemma 2.1]{GMP} and \cite[Theorem 4.34]{AFP}) that will be the key in order to prove that the solutions we find belong to $DTBV(\Omega)$. 
	\begin{lemma}\label{salti in TBV}
		Let $u \in TBV(\Omega) \cap L^\infty(\Omega)$ be nonnegative. Then
		\begin{enumerate}
			\item $S^*_u=\bigcup_{a>0}S_{T^\infty_a(u)}$ and $$u^\lor(x)=\lim_{a \to 0^+}(T^\infty_a(u))^\lor (x), \qquad u^\land(x)=\lim_{a \to 0^+}(T^\infty_a(u))^\land (x). $$
			\item $S^*_u$ is countably $\mathcal{H}^{N-1}$-rectifiable and $\mathcal{H}^{N-1}(S^*_u \setminus J^*_u)=0$.
		\end{enumerate}
	\end{lemma}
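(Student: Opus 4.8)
\medskip

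\noindent\emph{Proof plan.} The plan is to reduce the whole statement to two elementary pointwise identities for the truncations $v_a:=T_a^\infty(u)=u\vee a$. First I would note that $v_a\in BV(\Omega)$ for every $0<a<\|u\|_{L^\infty(\Omega)}=:M$: this is exactly the defining property of $TBV(\Omega)$, where one uses $u\ge 0$ so that $T_a^\infty(u^+)=v_a$ while $T_a^\infty(u^-)\equiv a$; for $a\ge M$ one simply has $v_a\equiv a$ a.e. Since $u\ge 0$, modulo Lebesgue--null sets $\{v_a>t\}=\{u>t\}$ and $\{v_a<t\}=\{u<t\}$ whenever $t\ge a$, whereas $\{v_a>t\}=\Omega$ for $t<a$ and $\{v_a<t\}=\emptyset$ for $t\le a$. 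Inserting this into the infimum/supremum defining $u^\lor$ and $u^\land$ gives
\[
v_a^\lor(x)=\max\{a,\,u^\lor(x)\},\qquad v_a^\land(x)=\max\{a,\,u^\land(x)\}\qquad\text{for every }x\in\Omega .
\]

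\smallskip

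From these identities part (1) is immediate. Letting $a\downarrow 0$ and recalling that $u^\lor,u^\land\ge 0$ (since $u\ge 0$) yields $(T_a^\infty(u))^\lor(x)\to u^\lor(x)$ and $(T_a^\infty(u))^\land(x)\to u^\land(x)$. Moreover, since $u^\land\le u^\lor$, the identities give $S_{v_a}=\{v_a^\land<v_a^\lor\}=\{x\in S_u^*:\ u^\lor(x)>a\}$, so that — using $u^\lor>u^\land\ge 0$, hence $u^\lor>0$, on $S_u^*$ — one obtains
\[
S_u^*=\bigcup_{a>0}S_{v_a}=\bigcup_{n\ge n_0}S_{v_{1/n}},
\]
a \emph{countable} union, with $n_0$ any integer satisfying $1/n_0<M$.

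\smallskip

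For part (2), every $v_{1/n}$ lies in $BV(\Omega)$, hence by the Federer--Vol'pert theorem (\cite[Theorem 3.78]{AFP}) each $S_{v_{1/n}}$ is countably $\mathcal H^{N-1}$-rectifiable and satisfies $\mathcal H^{N-1}(S_{v_{1/n}}\setminus J_{v_{1/n}})=0$; consequently the countable union $S_u^*=\bigcup_n S_{v_{1/n}}$ is countably $\mathcal H^{N-1}$-rectifiable. For the remaining claim I would establish the inclusion
\[
S_u^*\setminus J_u^*\ \subseteq\ \bigcup_{a\in\mathbb{Q},\ 0<a<M}\bigl(S_{v_a}\setminus J_{v_a}\bigr),
\]
whose right-hand side is $\mathcal H^{N-1}$-negligible. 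Equivalently, fix $x\in S_u^*$ and assume $x\in J_{v_a}$ for \emph{every} rational $a\in(0,u^\lor(x))$; the goal is to show $x\in J_u^*$. For each such $a$ one orients the jump normal of $v_a$ at $x$ so that the larger trace $v_a^+(x)=v_a^\lor(x)=u^\lor(x)$ lies on the positive side; since $v_{a'}=\max\{v_a,a'\}$ for $a<a'$ and the larger trace $u^\lor(x)$ exceeds $a'$, the uniqueness of the approximate jump of a $BV$ function forces all these normals to agree — call it $\nu$ — so that every $v_a$ jumps across the hyperplane $\{(y-x)\cdot\nu=0\}$ with $u^\lor(x)$ on $H^+$ and $v_a^-(x)=v_a^\land(x)=\max\{a,u^\land(x)\}$ on $H^-$. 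On $H^+$, since $v_a\to u^\lor(x)>a$ in the weak approximate sense and $v_a=u$ wherever $v_a>a$, the set $\{|u-u^\lor(x)|>\varepsilon\}$ has vanishing density in $B_\rho^+(x,\nu)$ for every $\varepsilon>0$, i.e.\ $u$ has weak approximate limit $u^\lor(x)$ on $H^+$. On $H^-$: if $u^\land(x)>0$, picking a rational $a<u^\land(x)$ gives $v_a^-(x)=u^\land(x)>a$ and the same argument yields weak approximate limit $u^\land(x)$; if instead $u^\land(x)=0$, then $v_a^-(x)=a$ for every admissible $a$, so $\{u>a+\varepsilon\}$ has vanishing density in $B_\rho^-(x,\nu)$, and letting $a\downarrow 0$ along the rationals shows that $u$ has weak approximate limit $0=u^\land(x)$ on $H^-$. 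In all cases $x\in J_u^*$, which proves the inclusion and therefore $\mathcal H^{N-1}(S_u^*\setminus J_u^*)=0$.

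\smallskip

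The one genuinely delicate point is this last step, namely the case in which the lower trace of a truncation coincides with its truncation level $a$ (equivalently $u^\land(x)\le a$): there a single $v_a$ does not by itself detect an approximate limit of $u$ from below, and one must use both that $a$ can be taken arbitrarily small \emph{and} that $\mathcal H^{N-1}(S_{v_a}\setminus J_{v_a})=0$ for each fixed rational $a$. This is precisely the $TBV$ counterpart of the $GBV$ statement \cite[Theorem 4.34]{AFP}, and the reduction through $v_a=u\vee a$ is the one already exploited in \cite[Lemma 2.1]{GMP}; everything else is routine bookkeeping with the two identities displayed above.
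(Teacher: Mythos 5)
Your argument is correct, and it follows the same route as the paper's source for this statement: the paper does not prove Lemma \ref{salti in TBV} itself but cites \cite[Lemma 2.1]{GMP} and \cite[Theorem 4.34]{AFP}, whose proof is precisely your reduction of $u$ to the truncations $v_a=u\vee a\in BV(\Omega)$, the two pointwise identities $v_a^{\lor}=\max\{a,u^{\lor}\}$, $v_a^{\land}=\max\{a,u^{\land}\}$, and the Federer--Vol'pert theorem applied to each $v_{1/n}$. Your treatment of the delicate case $u^{\land}(x)=0$ (where the lower trace of every truncation equals its truncation level, so one must send $a\downarrow 0$ along a countable set on whose truncations $S\setminus J$ is already negligible) is exactly the point handled in those references, and you handle it correctly.
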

	We finish this summary concerning $TBV(\Omega)$ by explicitly remarking that the coarea formula (see \cite[Theorem 3.40]{AFP}) implies that the sets $\{u>a\}$ and $\{u<-a\}$ are of finite perimeter for almost every $a>0$ provided $u\in TBV(\Omega)$. Consequently, the functions $\chi_{\{a<u<b\}}$ and $\chi_{\{-b<u<-a\}}$ belong to $BV(\Omega)$ for almost all $a,b>0$.\bk
	\subsection{The Anzellotti-Chen-Frid theory}
	In this section we summarize the theory of pairings due to  Anzelotti (\cite{A}, see also \cite{CF}). First we define
	$$
	\DM(\Omega)=\{z\in L^{\infty}(\Omega)^N	:\operatorname{div}z\in \mathcal{M}(\Omega)\}.
	$$
	In \cite[Theorem 1.2]{A}, it is shown that there exists a linear operator $[\cdot,\nu]:\DM(\Omega)\to L^{\infty}(\partial\Omega)$ such that \begin{equation} \label{riferz}\|[z,\nu]\|_{L^{\infty}(\partial\Omega)}\leq\|z\|_{L^{\infty}(\Omega)^N} \ \text{for all}\ z\in\DM(\Omega),\end{equation} and
	$$
	[z,\nu](x)=z(x)\cdot \nu(x)\ \  \text{ for all} \ \  x\in\partial\Omega \ \  \text{if}\ \ z\in C^{1}(\overline{\Omega})^N.
	$$
	Moreover, in \cite[Proposition 3.1]{CF}, the authors show that $\operatorname{div}z$ is absolutely continuous with respect to $\mathcal{H}^{N-1}$ for all $z\in \DM(\Omega)$. Consequently, the functional $(z, Du)\in \mathcal{D}'(\Omega)$ given by 
	\begin{equation}\label{pairing}
		\langle (z, Du),\varphi\rangle=-\int_{\Omega}u^*\varphi \operatorname{div}z-\int_{\Omega}uz \cdot\nabla\varphi \quad \text{for all $\varphi \in C^{1}_c(\Omega)$,}
	\end{equation}
	is well defined for all $u\in BV(\Omega)\cap L^{\infty}(\Omega)$. 
	
	The distribution $(z, Du)$ is a Radon measure having  finite total variation and, for any $v\in BV(\Omega)\cap L^{\infty}(\Omega)$,  it satisfies
	\begin{equation} \label{ec:2}
		|\langle (z, Dv), \varphi\rangle| \le \|\varphi\|_{L^{\infty}(\omega) }\| z
		\|_{L^{\infty}(\omega)^N} \int_{\omega} |Dv|\,,
	\end{equation}
	for all open sets $\omega \subset\subset \Omega$ and for all $\varphi\in C_c^1(\omega)$, in particular $ |(z, Dv)|\ll |D v|$ as measures. 
	\\ Furthermore, the following result, which extends Green's identity, holds.
	\begin{lemma}\label{campo prodotto}
		Let $z \in \mathcal{DM}^\infty(\Omega)$ and $u \in BV(\Omega)\cap L^\infty(\Omega)$. Then the functional $\left(z, Du\right)\in \mathcal{D}'(\Omega)$ is a Radon measure which is absolutely continuous with respect to $|Du|$. Moreover \begin{equation}\label{int per parti}
			\int_{\Omega} u^* 	\, \operatorname{div}z +  \int_\Omega(z, Du) \bk =\int_{\partial\Omega}u[z,\nu] \, d\mathcal{H}^{N-1},
		\end{equation}
		\begin{equation}\label{= misure}
			\operatorname{div}(uz)=u^* \operatorname{div}(z)+(z,Du) \quad \text{as measures,}
		\end{equation}
		and
		\begin{equation}\label{= al bordo mis}
			[uz,\nu]=u[z,\nu] \quad \text{$\mathcal{H}^{N-1}$-a.e. on $\partial\Omega$.}
		\end{equation}
	\end{lemma}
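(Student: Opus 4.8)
The plan is to reconstruct the classical Anzellotti \cite{A} / Chen--Frid \cite{CF} argument, proving the three displayed identities in the logical order \eqref{= misure} $\Rightarrow$ well-posedness of $[uz,\nu]$ and \eqref{= al bordo mis} $\Rightarrow$ \eqref{int per parti} (rather than in the order they are listed). The first assertion — that $(z,Du)$ is a Radon measure of finite total variation with $|(z,Du)|\ll|Du|$ — has in fact already been obtained in the discussion preceding the statement (see \eqref{pairing} and \eqref{ec:2}), so I would only recall it.

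For \eqref{= misure} I would first note that $\operatorname{div}z\ll\mathcal{H}^{N-1}$ by \cite[Proposition 3.1]{CF}; since $u^*$ is bounded and defined $\mathcal{H}^{N-1}$-a.e. on $\Omega$ (outside the $\mathcal{H}^{N-1}$-negligible set $S_u\setminus J_u$), the product $u^*\operatorname{div}z$ is a well-defined finite Radon measure. Then the identity is just a rearrangement of the definition \eqref{pairing}: for every $\varphi\in C^1_c(\Omega)$,
\[
\langle\operatorname{div}(uz),\varphi\rangle=-\int_\Omega uz\cdot\nabla\varphi=\langle(z,Du),\varphi\rangle+\int_\Omega u^*\varphi\,\operatorname{div}z,
\]
so that $\operatorname{div}(uz)=u^*\operatorname{div}z+(z,Du)$ in $\mathcal{D}'(\Omega)$; as the right-hand side is a finite measure, so is $\operatorname{div}(uz)$, whence $uz\in\mathcal{DM}^\infty(\Omega)$ and \eqref{= misure} holds in the sense of measures. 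In particular the normal trace $[uz,\nu]\in L^\infty(\partial\Omega)$ is well defined by \cite[Theorem 1.2]{A}.

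To prove \eqref{= al bordo mis} I would use the Gauss--Green relation intrinsic to Anzellotti's construction, $\int_\Omega\psi\operatorname{div}w+\int_\Omega w\cdot\nabla\psi=\int_{\partial\Omega}\psi[w,\nu]\,d\mathcal{H}^{N-1}$ for $w\in\mathcal{DM}^\infty(\Omega)$ and $\psi\in C^1(\overline\Omega)$. For $u\in C^1(\overline\Omega)$ the classical Leibniz rule $\operatorname{div}(uz)=u\operatorname{div}z+z\cdot\nabla u$, combined with this relation applied to $w=uz$ and then to $w=z$ with test function $\psi u$, immediately gives $[uz,\nu]=u[z,\nu]$ $\mathcal{H}^{N-1}$-a.e. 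For a general $u\in BV(\Omega)\cap L^\infty(\Omega)$ I would approximate by $u_n\in C^\infty(\overline\Omega)$ with $\sup_n\|u_n\|_{L^\infty}<\infty$, $u_n\to u$ in $L^1(\Omega)$ and strictly in $BV(\Omega)$, and with boundary traces $u_n\to u$ in $L^1(\partial\Omega)$, and pass to the limit in $[u_nz,\nu]=u_n[z,\nu]$: on the right $u_n[z,\nu]\to u[z,\nu]$ in $L^1(\partial\Omega)$, while on the left, by \eqref{= misure}, $\operatorname{div}(u_nz)=u_n\operatorname{div}z+(z,Du_n)$ converges to $u^*\operatorname{div}z+(z,Du)=\operatorname{div}(uz)$ weakly-$*$ as measures, so that testing the Gauss--Green relation against a fixed $\psi\in C^1(\overline\Omega)$ gives $\int_{\partial\Omega}\psi[u_nz,\nu]\to\int_{\partial\Omega}\psi[uz,\nu]$; arbitrariness of $\psi$ then yields \eqref{= al bordo mis}, with $u$ understood as its trace on $\partial\Omega$.

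I expect the main obstacle to be exactly this last passage to the limit, namely the convergences $u_n\operatorname{div}z\to u^*\operatorname{div}z$ and $(z,Du_n)\to(z,Du)$ as measures: since $\operatorname{div}z$ may charge the $(N-1)$-rectifiable jump set of $u$, the approximating sequence must be engineered so that the limit selects precisely the precise representative $u^*$ (not $u^\pm$), which is the continuity of the Anzellotti pairing under (area-)strict convergence — the technical heart of \cite{A,CF}. Granting this, \eqref{int per parti} is then one line: applying the Gauss--Green relation to $w=uz$ with $\psi\equiv 1$ gives $\int_\Omega\operatorname{div}(uz)=\int_{\partial\Omega}[uz,\nu]\,d\mathcal{H}^{N-1}$, and inserting \eqref{= misure} on the left and \eqref{= al bordo mis} on the right yields $\int_\Omega u^*\operatorname{div}z+\int_\Omega(z,Du)=\int_{\partial\Omega}u[z,\nu]\,d\mathcal{H}^{N-1}$, which is the claim.
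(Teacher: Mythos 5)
The paper never actually proves this lemma: it is quoted as part of the summary of the Anzellotti--Chen--Frid pairing theory, with \cite{A} and \cite{CF} as the references, so there is no in-paper argument to measure your proposal against. Your reconstruction follows the standard route of those references and is correct in outline: \eqref{= misure} is indeed an immediate rearrangement of the definition \eqref{pairing} once one knows $\operatorname{div}z\ll\mathcal{H}^{N-1}$ and that $u^*$ is defined $\mathcal{H}^{N-1}$-a.e.; the identity also shows $uz\in\DM(\Omega)$ so that $[uz,\nu]$ makes sense; and \eqref{int per parti} does follow in one line from \eqref{= misure}, \eqref{= al bordo mis} and the Gauss--Green relation with $\psi\equiv 1$. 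The only step that is not self-contained is the one you flag yourself: passing to the limit in $[u_nz,\nu]=u_n[z,\nu]$ requires (i) a mollification whose pointwise limit $\mathcal{H}^{N-1}$-a.e.\ (hence $|\operatorname{div}z|$-a.e.) is the precise representative $u^*$ and whose boundary traces converge in $L^1(\partial\Omega)$, and (ii) the continuity $(z,Du_n)\to(z,Du)$ of the pairing under (area-)strict convergence; both are genuine theorems of \cite{A} rather than routine verifications, and without them the left-hand side of the Gauss--Green identity need not converge to the right object on the jump set of $u$. Since the lemma is background material that the paper itself outsources to the literature, invoking those results is entirely appropriate here, but you should be aware that as written your argument is a proof scheme conditional on them rather than a complete proof.
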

	Under the same assumptions on $z$ and $u$ we indicate by $\theta(z, Du, x)$ the Radon-Nikod\'ym derivative of $(z,Du)$ with respect to $|Du|$, i.e. we have
	$$(z,Du)=\theta(z, Du, x)|Du| \quad \text{as measures in $\Omega$.}$$
	A chain rule for the Radon-Nykodim derivative holds (see \cite[Proposition 4.5 (iii)]{CDC}).
	\begin{lemma}
		Let $z\in\DM_{\rm{loc}}(\Omega)$, $u\in BV_{\rm{loc}}(\Omega)\cap L^{\infty}_{\rm{loc}}(\Omega)$ and $h:\mathbb{R}\to\mathbb{R}$ be a non-decreasing locally Lipschitz function. Then
		\begin{equation}\label{chain rule pairing}
			\theta(z,D h(u),x)=\theta(z,Du,x),\quad\text{for $|Dh(u)|$-a.e. $x\in\Omega$.}
		\end{equation}	  
	\end{lemma}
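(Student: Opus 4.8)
The plan is to turn the statement into an identity of Radon measures and then verify it piece by piece along the mutually singular decomposition $Du=D^a u+D^c u+D^j u$. Since the pairing \eqref{pairing} and the Radon--Nikod\'ym density $\theta$ are both local and $u\in BV_{\rm loc}(\Omega)\cap L^\infty_{\rm loc}(\Omega)$, I would fix an open set $\omega\subset\subset\Omega$ on which $u$ is bounded and, replacing $h$ by a suitable modification outside the range of $u$ on $\omega$, assume that $h$ is globally Lipschitz, bounded and still non-decreasing. By the chain rule of Theorem \ref{t chain rule}, $Dh(u)=h'(\widetilde u)\widetilde Du+(h(u^+)-h(u^-))\nu_u\,\mathcal{H}^{N-1}\res J_u$, so the diffuse part of $Dh(u)$ is absolutely continuous with respect to $\widetilde Du$ and, up to $\mathcal{H}^{N-1}$-null sets, $J_{h(u)}\subseteq J_u$; hence $|Dh(u)|\ll|Du|$, the density $g:=\frac{d|Dh(u)|}{d|Du|}$ is well defined, $g\,|Du|=|Dh(u)|$, and the claim \eqref{chain rule pairing} is equivalent to
\begin{equation}\label{eq:plangoal}
(z,Dh(u))=\theta(z,Du,\cdot)\,|Dh(u)|\qquad\text{as measures.}
\end{equation}
Because $|Dh(u)|$ ignores $S_u\setminus J_u$, it suffices to check \eqref{eq:plangoal} on the absolutely continuous, Cantor and jump parts separately.

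On the absolutely continuous part the identity is elementary. It is part of Anzellotti's calculus that the Lebesgue part of $(z,Dw)$ equals $z\cdot\nabla w\,dx$ for every $w\in BV(\Omega)\cap L^\infty(\Omega)$, so $\theta(z,Du,x)=\frac{z\cdot\nabla u}{|\nabla u|}$ at $\mathcal{L}^N$-a.e.\ $x$ with $\nabla u(x)\neq0$; since $\nabla h(u)=h'(u)\nabla u$ a.e.\ and $h'\ge0$, the same computation gives $\theta(z,Dh(u),x)=\frac{z\cdot\nabla u}{|\nabla u|}=\theta(z,Du,x)$, while $|Dh(u)|$ puts no mass on $\{h'(u)=0\}$.

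For the Cantor and jump parts I would use that $\theta(z,Du,\cdot)$ depends only on $z$ and on the \emph{direction} of the measure $Du$: on $\Omega\setminus S_u$ this direction is $\nu(x):=\frac{dDu}{d|Du|}(x)$, a $|\widetilde Du|$-a.e.\ defined unit vector, and on $J_u$ the relevant data are the orientation $\nu_u$ and the one-sided normal traces of $z$ on the rectifiable set $J_u$ (these exist because $\operatorname{div}z\ll\mathcal{H}^{N-1}$, recalled above). Concretely, computing $(z,Du)\res J_u$ from \eqref{= misure}--\eqref{int per parti} applied to $uz\in\mathcal{DM}^\infty(\Omega)$ and from $u^*=\frac{u^++u^-}{2}$ on $J_u$, one obtains a density of the form $\frac12\big((z\cdot\nu_u)^++(z\cdot\nu_u)^-\big)(u^+-u^-)$, whence $\theta(z,Du,x)=\frac12\big((z\cdot\nu_u)^++(z\cdot\nu_u)^-\big)\operatorname{sgn}(u^+-u^-)$ there. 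Now by Theorem \ref{t chain rule} the Cantor part of $Dh(u)$ equals $h'(\widetilde u)\ge0$ times that of $Du$, so $\frac{dDh(u)}{d|Dh(u)|}=\nu(x)$ coincides with $\frac{dDu}{d|Du|}$ for $|\widetilde Dh(u)|$-a.e.\ $x$; likewise $J_{h(u)}\subseteq J_u$ carries the same orientation $\nu_u$ and the same one-sided traces of $z$, and $\operatorname{sgn}(h(u^+)-h(u^-))=\operatorname{sgn}(u^+-u^-)$ for $\mathcal{H}^{N-1}$-a.e.\ $x\in J_{h(u)}$ since $h$ is non-decreasing. Comparing the two densities yields \eqref{eq:plangoal} on the Cantor and jump parts as well, which finishes the argument.

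The step I expect to be the real obstacle is making this "direction principle" rigorous for a general field $z\in\mathcal{DM}^\infty$: the diffuse chain rule $(z,Dh(u))\res(\Omega\setminus S_u)=h'(\widetilde u)\,(z,Du)\res(\Omega\setminus S_u)$ together with the blow-up/one-sided-trace description of $\theta$ on the Cantor and jump parts — this is exactly where $\operatorname{div}z\ll\mathcal{H}^{N-1}$ and $u\in BV\cap L^\infty$ enter in full strength. A way to lighten the $\mathcal{DM}^\infty$ calculus is to prove \eqref{eq:plangoal} first for $h\in C^1$ and then pass to a general non-decreasing Lipschitz $h$ by mollification $h_n=h*\rho_{1/n}$: via Theorem \ref{t chain rule} and dominated convergence (choosing the representative of $h'$ so as to absorb the Ambrosio--Dal Maso exceptional set) one gets $Dh_n(u)\to Dh(u)$ in total variation on $\omega$ and $h_n(u)\to h(u)$ in $L^1(\omega)$, whence $|Dh_n(u)|\to|Dh(u)|$ in total variation and $(z,Dh_n(u))\to(z,Dh(u))$, so that the fixed bounded Borel function $\theta(z,Du,\cdot)$ can be integrated against the convergent measures to pass \eqref{eq:plangoal} to the limit.
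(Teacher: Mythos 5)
First, a point of reference: the paper does not prove this lemma at all — it is quoted from Crasta--De Cicco \cite[Proposition 4.5 (iii)]{CDC} — so there is no internal argument to compare against, and your attempt has to be judged as a self-contained proof. Your reduction to the measure identity $(z,Dh(u))=\theta(z,Du,\cdot)\,|Dh(u)|$ is the right formulation, and two of the three pieces are handled correctly: on the absolutely continuous part the identification of the Lebesgue part of the pairing with $z\cdot\nabla w\,\mathcal{L}^N$ is classical Anzellotti theory, and your jump-part computation — combining \eqref{= misure}, Lemma \ref{lem sigma}, Lemma \ref{lem normale} and $u^*=\tfrac{u^++u^-}{2}$ on $J_u$ to obtain the density $\tfrac12\left([z,\nu_u]^++[z,\nu_u]^-\right)\sgn(u^+-u^-)$, which is insensitive to replacing $u$ by $h(u)$ because $h$ is non-decreasing and the one-sided traces depend only on $z$ and the oriented rectifiable set — is correct and is essentially how the cited authors treat that piece.

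The genuine gap is the Cantor part, and you have flagged it yourself without closing it. The statement you need there — that $(z,Dh(u))\res(\Omega\setminus S_u)=h'(\widetilde u)\,(z,Du)\res(\Omega\setminus S_u)$, equivalently that on the diffuse part $\theta(z,D\cdot,x)$ depends only on the polar vector $\frac{dDu}{d|Du|}(x)$ — is precisely the hard content of \cite[Proposition 4.5]{CDC}; it does not follow from the distributional definition \eqref{pairing}, because the Cantor part of the pairing admits no explicit pointwise formula that one could compare on both sides. Your proposed workaround (prove the identity for $h\in C^1$ and then mollify) does not remove this obstacle: the mollification step itself is sound (the pushforward of $|\widetilde Du|$ under $\widetilde u$ is absolutely continuous with respect to $\mathcal{L}^1$ by the coarea formula, so $h_n'(\widetilde u)\to h'(\widetilde u)$ in $L^1(|\widetilde Du|)$ and the total variations converge), but it only reduces Lipschitz $h$ to $C^1$ $h$, and the Cantor part is exactly as inaccessible for $C^1$ nonlinearities as for Lipschitz ones. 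To close the argument you would need the Crasta--De Cicco representation of the diffuse pairing through normal traces on the reduced boundaries of the level sets $\{u>t\}$ (a coarea-type formula for $(z,Du)$), at which point you are reconstructing the cited proposition rather than giving an independent proof of it.
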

	\begin{remark}
		If $z\in\DM(\Omega)$ satisfies $-\operatorname{div}z=f\in L^{1}(\Omega)$, then
		\begin{equation}\label{def pair div l1}
			\langle (z, Du),\varphi\rangle=\int_{\Omega}u\varphi f-\int_{\Omega}uz \cdot\nabla\varphi\text{ for all $\varphi\in C^{1}_c(\Omega)$},
		\end{equation} 
		and \eqref{int per parti} becomes
		\begin{equation}\label{int per parti L1}
			-\int_{\Omega} uf + \int_{\Omega}(z, Du)=\int_{\partial\Omega}u[z,\nu] \, d\mathcal{H}^{N-1}.
		\end{equation}   
		Expression \eqref{int per parti L1} will be used throughout the text.
	\end{remark}
	Also useful to us is the fact that one may define the normal trace $[z,\Sigma]^{\pm}$ of a  vector field $z\in\DM(\Omega)$ on an oriented $C^1$-	hypersurface $\Sigma\subset\Omega$ by
	\begin{equation}\label{e.hypersurf}
		[z,\Sigma]^{\pm}:=[z,\nu_{\Omega^\pm}],
	\end{equation}
	where $\Omega^{\pm}\subset\subset\Omega$ are open $C^{1}$-domains such that $\Sigma\subset\partial\Omega^{\pm}$ and $\nu_{\Omega^\pm}=\pm\nu_{\Sigma}$. It can be proven that definition \eqref{e.hypersurf} does not depend on the particular choice of $\Omega^\pm$ up to a set of zero $\mathcal{H}^{N-1}$ measure. Furthermore, according to \cite[Proposition 3.4]{ACM}, it holds 
	\begin{equation}\label{e.rect1}
		\left(\operatorname{div} z\right)\res\Sigma =\left(\left[z, \Sigma\right]^+ - \left[z, \Sigma\right]^-\right) \mathcal{H}^{N-1}\res\Sigma.
	\end{equation}
	By localization, this notion can be extended to oriented countably $\mathcal{H}^{N-1}$-rectifiable sets $\Sigma$. In this way, it is possible to extend \eqref{e.rect1} to get the following result (see \cite[Lemma 2.4]{GMP}). 
	\begin{lemma}\label{lem sigma}
		Let $z \in \mathcal{DM}^\infty(\Omega)$ and let $\Sigma \subset \Omega$ be an orientated countably $\mathcal{H}^{N-1}$-rectifiable set. Then
		$$\left(\operatorname{div} z\right)\res\Sigma =\left(\left[z, \Sigma\right]^+ - \left[z, \Sigma\right]^-\right) \mathcal{H}^{N-1}\res\Sigma.$$
	\end{lemma}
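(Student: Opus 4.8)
The plan is to reduce the statement to the $C^{1}$-hypersurface case already recorded in \eqref{e.rect1}, exploiting the locality of the normal trace recalled right after \eqref{e.hypersurf} together with the absolute continuity of $\operatorname{div} z$ with respect to $\mathcal{H}^{N-1}$.

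First I would fix a Borel representative of $\Sigma$ and, by the very definition of countable $\mathcal{H}^{N-1}$-rectifiability, decompose it as
\[
\Sigma = N \cup \bigcup_{i=1}^{\infty}\Sigma_i,
\]
where $\mathcal{H}^{N-1}(N)=0$, the $\Sigma_i$ are pairwise disjoint Borel sets, and each $\Sigma_i$ is contained in an oriented $C^{1}$-hypersurface $M_i\subset\Omega$ whose orientation agrees $\mathcal{H}^{N-1}$-a.e.\ on $\Sigma_i$ with the fixed orientation of $\Sigma$. By the independence statement recalled after \eqref{e.hypersurf} (locality of the normal trace), this gives $[z,\Sigma]^{\pm}=[z,M_i]^{\pm}$ $\mathcal{H}^{N-1}$-a.e.\ on $\Sigma_i$. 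Since $\operatorname{div}z\ll\mathcal{H}^{N-1}$ by \cite[Proposition 3.1]{CF}, both $(\operatorname{div}z)\res N$ and $\mathcal{H}^{N-1}\res N$ vanish, so it suffices to establish the identity with $\Sigma$ replaced by each $\Sigma_i$ and then sum.

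Fixing $i$, applying \eqref{e.rect1} to $M_i$ yields
\[
(\operatorname{div} z)\res M_i = \left([z,M_i]^+ - [z,M_i]^-\right)\mathcal{H}^{N-1}\res M_i,
\]
and restricting this equality of Radon measures further to the Borel subset $\Sigma_i\subset M_i$ gives, using the locality identity above,
\[
(\operatorname{div} z)\res \Sigma_i = \left([z,M_i]^+ - [z,M_i]^-\right)\mathcal{H}^{N-1}\res \Sigma_i = \left([z,\Sigma]^+ - [z,\Sigma]^-\right)\mathcal{H}^{N-1}\res \Sigma_i.
\]
Summing over $i$, by the disjointness of the $\Sigma_i$, the $\sigma$-additivity of $\operatorname{div} z$ and $\mathcal{H}^{N-1}$, and the fact that $\operatorname{div}z$ is a finite Radon measure on $\Omega$ (so the series converge absolutely), I obtain
\[
(\operatorname{div} z)\res \Sigma = \sum_{i=1}^{\infty}(\operatorname{div} z)\res \Sigma_i = \sum_{i=1}^{\infty}\left([z,\Sigma]^+ - [z,\Sigma]^-\right)\mathcal{H}^{N-1}\res \Sigma_i = \left([z,\Sigma]^+ - [z,\Sigma]^-\right)\mathcal{H}^{N-1}\res \Sigma,
\]
which is the assertion.

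The only genuinely delicate point is the locality of the normal trace invoked in the second step: if two oriented $C^{1}$-hypersurfaces coincide, with the same orientation, on a set $A$, then their normal traces agree $\mathcal{H}^{N-1}$-a.e.\ on $A$. This is precisely what is recalled after \eqref{e.hypersurf}, and it is also what guarantees that $[z,\Sigma]^{\pm}$ is well defined on the rectifiable set $\Sigma$ independently of the chosen decomposition; all the remaining ingredients — restriction of an absolutely continuous measure to a Borel subset and countable additivity — are routine.
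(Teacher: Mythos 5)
Your argument is correct and is exactly the "localization" the paper alludes to: the paper itself gives no proof, deferring to \cite[Lemma 2.4]{GMP} and noting only that \eqref{e.rect1} extends to rectifiable sets by decomposing $\Sigma$ into pieces of $C^1$-hypersurfaces, which is what you carry out. The decomposition, the use of $\operatorname{div}z\ll\mathcal{H}^{N-1}$ on the negligible remainder, and the appeal to locality of the normal trace (which is indeed the one nontrivial ingredient, supplied by \cite{ACM}) are all in order.
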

	As a consequence of Lemma \ref{campo prodotto}, we get (see \cite[Lemma 2.5]{GMP}) the following. 
	\begin{lemma}\label{lem normale}
		Let $u \in BV(\Omega) \cap L^\infty(\Omega)$ and $z \in \mathcal{DM}^\infty(\Omega)$. Then
		\begin{equation}\label{saltando sul bordo}
			\left[u z, \nu_u\right]^\pm = u^\pm \left[z, \nu_u\right] \quad \text{$\mathcal{H}^{N-1}$-a.e. on $J_u$.}
		\end{equation}
	\end{lemma}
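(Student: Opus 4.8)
The plan is to reduce the identity \eqref{saltando sul bordo} to a local statement on the rectifiable set $J_u$, where we may invoke Lemma \ref{lem sigma} applied to the vector field $uz$. First I would record that, since $u\in BV(\Omega)\cap L^\infty(\Omega)$ and $z\in\mathcal{DM}^\infty(\Omega)$, the product $uz$ lies in $L^\infty(\Omega)^N$; and by \eqref{= misure} in Lemma \ref{campo prodotto} one has $\operatorname{div}(uz)=u^*\operatorname{div}z+(z,Du)$ as measures, which is a Radon measure with finite total variation, so $uz\in\mathcal{DM}^\infty(\Omega)$ as well. Hence both $z$ and $uz$ have well-defined normal traces $[z,\Sigma]^\pm$ and $[uz,\Sigma]^\pm$ on any oriented countably $\mathcal{H}^{N-1}$-rectifiable $\Sigma\subset\Omega$, and in particular on $\Sigma=J_u$ (equipped with the orientation $\nu_u$), which is countably $\mathcal{H}^{N-1}$-rectifiable by the properties of $BV$ functions recalled in Section \ref{s.prelim}.

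Next I would apply Lemma \ref{lem sigma} to both fields on $\Sigma=J_u$. For $uz$ this gives
\begin{equation*}
\big(\operatorname{div}(uz)\big)\res J_u=\big([uz,J_u]^+-[uz,J_u]^-\big)\,\mathcal{H}^{N-1}\res J_u,
\end{equation*}
while for $z$ it gives $(\operatorname{div}z)\res J_u=\big([z,J_u]^+-[z,J_u]^-\big)\,\mathcal{H}^{N-1}\res J_u$. Now I would restrict the measure identity \eqref{= misure} to $J_u$: since $(z,Du)\ll|Du|$ by \eqref{ec:2} and the jump part of $Du$ on $J_u$ is $(u^+-u^-)\nu_u\,\mathcal{H}^{N-1}\res J_u$, while $\operatorname{div}z$ restricted to $J_u$ is read through its normal traces as above, one obtains an expression for $\big(\operatorname{div}(uz)\big)\res J_u$ in terms of $u^*=\tfrac{u^++u^-}{2}$, the traces $[z,J_u]^\pm$, and $\theta(z,Du,\cdot)$. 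Comparing the two expressions for $\big(\operatorname{div}(uz)\big)\res J_u$ yields, $\mathcal{H}^{N-1}$-a.e. on $J_u$, a relation between $[uz,J_u]^+-[uz,J_u]^-$ and $u^\pm$, $[z,J_u]^\pm$. To separate the "$+$'' and "$-$'' sides rather than just their difference, I would localize further: for a generic point $x\in J_u$ choose the $C^1$ domains $\Omega^\pm$ as in \eqref{e.hypersurf} with $\Sigma=J_u$, and apply the Green-type identity \eqref{int per parti} of Lemma \ref{campo prodotto} on $\Omega^+$ (and on $\Omega^-$) to the field $z$ and to the field $uz$; subtracting the two and using that on $\partial\Omega^+\setminus\Sigma$ the traces of $z$ and of $uz$ agree with their interior values (so those boundary contributions cancel) isolates the contribution on $\Sigma$, giving $[uz,\nu_u]^+=u^+[z,\nu_u]$ $\mathcal{H}^{N-1}$-a.e.\ on $J_u$, and symmetrically for the "$-$'' trace.

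The main obstacle I anticipate is the bookkeeping in the localization step: making precise that the approximate jump values $u^\pm(x)$ are exactly the one-sided traces of $u$ on the hypersurface pieces approximating $J_u$ near $x$, so that the Green identity on $\Omega^\pm$ genuinely produces $u^\pm[z,\nu_u]$ and not some average. This requires the rectifiability and orientation of $J_u$ (Section \ref{s.prelim}), the Lebesgue-point characterization of $u^\pm$ along $\nu_u$, and the fact — from \cite[Proposition 3.1]{CF} — that $\operatorname{div}z\ll\mathcal{H}^{N-1}$, which guarantees there is no concentrated mass of $\operatorname{div}z$ on the lower-dimensional "seams'' created by the localization. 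Once these are in place, the argument is a routine combination of \eqref{= misure}, \eqref{int per parti}, Lemma \ref{lem sigma}, and the chain rule \eqref{chain rule pairing} (the latter to handle $\theta(z,Du,\cdot)=\theta(z,Dv,\cdot)$ if one prefers to work with a monotone truncation of $u$ first and then remove it).
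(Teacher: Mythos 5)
The paper does not prove this lemma: it is quoted verbatim from \cite[Lemma 2.5]{GMP} as a consequence of Lemma \ref{campo prodotto}, so there is no in-paper argument to compare against. Your proposal is essentially the standard proof of that cited result, and its backbone is sound: $uz\in\DM(\Omega)$ via \eqref{= misure}, localization of the normal traces to the one-sided domains $\Omega^\pm$ along the rectifiable set $J_u$, and identification of the interior trace of $u|_{\Omega^\pm}$ on $J_u$ with $u^\pm$ --- the last point being, as you correctly flag, the real crux, and it is classical BV trace theory.

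Two remarks on efficiency and precision. First, the entire middle portion (restricting \eqref{= misure} to $J_u$, invoking Lemma \ref{lem sigma} for both $z$ and $uz$, and comparing) can only ever produce the \emph{difference} $[uz,\nu_u]^+-[uz,\nu_u]^-$, as you acknowledge, and moreover to evaluate $(z,Du)\res J_u$ you would need the nontrivial identification of $\theta(z,Du,\cdot)$ on $J_u$ with the average of the one-sided traces of $z$; since the localization step supersedes all of this, you can delete it. Second, the finishing move is phrased awkwardly: normal traces of an $L^\infty$ field are not pointwise ``interior values'' on $\partial\Omega^+\setminus\Sigma$, so the claimed cancellation there is not the right mechanism. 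The clean way to conclude is to apply \eqref{= al bordo mis} of Lemma \ref{campo prodotto} directly in $\Omega^+$ (resp.\ $\Omega^-$), which gives $[uz,\nu_{\Omega^\pm}]=\tau^\pm(u)\,[z,\nu_{\Omega^\pm}]$ $\mathcal{H}^{N-1}$-a.e.\ on all of $\partial\Omega^\pm$, and then restrict to $\Sigma\subset\partial\Omega^\pm$ where $\tau^\pm(u)=u^\pm$ and $[\,\cdot\,,\nu_{\Omega^\pm}]=[\,\cdot\,,\Sigma]^\pm$ by \eqref{e.hypersurf}; no subtraction of Green identities is needed. With that adjustment the argument is complete and coincides with the proof in \cite{GMP}.
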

	We finish by giving properties of the spaces $DBV(\Omega)$ and $DTBV^{+}(\Omega)$. The next result is proven, for  $\alpha=1$, in \cite[Lemma 5.3]{GMP}.
	\begin{lemma}\label{Tu z parring}
		Let $u \in DTBV^+(\Omega) \cap L^\infty(\Omega)$ and $z \in \DM(\Omega)$. Then $z \chi_{\{u>a\}} \in \DM(\Omega)$ for almost every $a>0$ and \begin{equation}\label{Tu z parring eq}
			\left(z, D(T^{\infty}_a(u))^{\alpha}\right)=\left(z \chi_{\{u>a\}}, D(T^{\infty}_a(u))^{\alpha}\right) \quad \text{for a.e. $a>0$ and for all $\alpha>0$}.
		\end{equation}
	\end{lemma}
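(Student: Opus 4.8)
The plan is to bootstrap from the known case $\alpha=1$, namely \cite[Lemma 5.3]{GMP}, to arbitrary $\alpha>0$ by means of the chain rule \eqref{chain rule pairing} for the Radon--Nikod\'ym density $\theta$. It suffices to argue for a.e.\ $a\in(0,\|u\|_{L^{\infty}(\Omega)})$, since for $a\ge\|u\|_{L^{\infty}(\Omega)}$ one has $T^{\infty}_a(u)\equiv a$ and both sides of \eqref{Tu z parring eq} vanish. I would first record two facts valid for a.e.\ such $a$. Since $u\in TBV(\Omega)$, the coarea formula gives $\chi_{\{u>a\}}\in BV(\Omega)\cap L^{\infty}(\Omega)$, so by the product rule of Lemma \ref{campo prodotto} the distribution $\operatorname{div}(z\chi_{\{u>a\}})=\chi^{*}_{\{u>a\}}\operatorname{div}z+(z,D\chi_{\{u>a\}})$ is a Radon measure of finite total variation; hence $z\chi_{\{u>a\}}\in\DM(\Omega)$ for a.e.\ $a>0$. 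Moreover, $u\in DTBV^{+}(\Omega)$ means $\mathcal{H}^{N-1}(S^{*}_{u})=0$, and Lemma \ref{salti in TBV} gives $S^{*}_{u}=\bigcup_{a>0}S_{T^{\infty}_a(u)}$, whence $\mathcal{H}^{N-1}(S_{T^{\infty}_a(u)})=0$, i.e.\ $D^{j}(T^{\infty}_a(u))=0$, for every $a>0$.

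Next, fix such an $a$ and set $w:=T^{\infty}_a(u)$ and $v:=w^{\alpha}=(T^{\infty}_a(u))^{\alpha}$, so that $a\le w\le\|u\|_{L^{\infty}(\Omega)}$. I would choose a non-decreasing, globally Lipschitz function $\Phi\colon\mathbb{R}\to\mathbb{R}$ with $\Phi(s)=s^{\alpha}$ for all $s\in[a,\|u\|_{L^{\infty}(\Omega)}]$ (for instance, extend $s\mapsto s^{\alpha}$ by constants outside that interval; note that $s\mapsto s^{\alpha}$ itself is not admissible when $\alpha<1$, being non-Lipschitz near $0$). Then $v=\Phi(w)\in BV(\Omega)\cap L^{\infty}(\Omega)$ by Theorem \ref{t chain rule}, and, $D^{j}w=0$ having been established, \eqref{chain rule senza j} gives $Dv=\Phi'(\widetilde w)\,Dw=\alpha\,\widetilde w^{\,\alpha-1}\,Dw$. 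Since $a\le\widetilde w\le\|u\|_{L^{\infty}(\Omega)}$, the density $\alpha\widetilde w^{\,\alpha-1}$ is bounded and bounded below by a positive constant, so $|Dv|$ and $|Dw|$ are mutually absolutely continuous; in particular the pairings $(z,Dv)$ and $(z\chi_{\{u>a\}},Dv)$ are well-defined Radon measures, absolutely continuous with respect to $|Dw|$.

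Finally I would combine these ingredients. By \cite[Lemma 5.3]{GMP} (the case $\alpha=1$), $(z,Dw)=(z\chi_{\{u>a\}},Dw)$ as measures; dividing by $|Dw|$ yields $\theta(z,Dw,x)=\theta(z\chi_{\{u>a\}},Dw,x)$ for $|Dw|$-a.e.\ $x$, hence for $|Dv|$-a.e.\ $x$ by the absolute continuity above. Applying \eqref{chain rule pairing} with $h=\Phi$, once to the field $z$ and once to $z\chi_{\{u>a\}}$, gives $\theta(z,Dv,x)=\theta(z,Dw,x)$ and $\theta(z\chi_{\{u>a\}},Dv,x)=\theta(z\chi_{\{u>a\}},Dw,x)$, both for $|Dv|$-a.e.\ $x$. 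Chaining the three identities, $\theta(z,Dv,x)=\theta(z\chi_{\{u>a\}},Dv,x)$ for $|Dv|$-a.e.\ $x$, which is precisely $(z,D(T^{\infty}_a(u))^{\alpha})=(z\chi_{\{u>a\}},D(T^{\infty}_a(u))^{\alpha})$, i.e.\ \eqref{Tu z parring eq}.

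The substantive part of the argument is the cited case $\alpha=1$, whose proof leans on the absence of weak jumps of $u$ so that $Dw$ is concentrated, up to a $|Dw|$-negligible level set, on $\{u>a\}$; I would not redo it. For the extension carried out here the only delicate points are the choice of an admissible $\Phi$ (globally Lipschitz and non-decreasing, so that both \eqref{chain rule senza j} and \eqref{chain rule pairing} apply — which is why one works on the fixed range $[a,\|u\|_{L^{\infty}(\Omega)}]$ of $w$ rather than with $s\mapsto s^{\alpha}$), and the absolute continuity $|Dv|\ll|Dw|$ used to transfer the $|Dw|$-a.e.\ identity of the $\theta$'s onto $|Dv|$; both have been handled above. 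That the Lipschitz constant of $\Phi$ depends on $a$ is harmless, every assertion being made for a.e.\ $a>0$.
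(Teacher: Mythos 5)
Your argument is correct, and it takes a genuinely different route from the paper. The paper proves \eqref{Tu z parring eq} directly and self-containedly for all $\alpha>0$: it sets $\overline{T}_a(u)=T_a^\infty(u)^\alpha-a^\alpha$ and applies the Leibniz rule \eqref{= misure} twice, so that both pairings are rewritten as $\operatorname{div}(\overline{T}_a(u)z)-\overline{T}_a(u)\operatorname{div}z$; the hypothesis $\mathcal{H}^{N-1}(S_u^*)=0$ enters only to annihilate the leftover term $\overline{T}_a(u)\,(z,D\chi_{\{u>a\}})$, because $(z,D\chi_{\{u>a\}})\ll|D\chi_{\{u>a\}}|$ and $\overline{T}_a(u)=0$ $|D\chi_{\{u>a\}}|$-a.e. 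You instead take the $\alpha=1$ case as a black box from \cite[Lemma 5.3]{GMP} — which is legitimate, since the paper itself attributes exactly that case to that reference — and lift it to general $\alpha$ through the Radon--Nikod\'ym chain rule \eqref{chain rule pairing} together with $|Dv|\ll|Dw|$. Your route is shorter given the base case and cleanly isolates where the $DTBV^+$ hypothesis is used (inside the cited $\alpha=1$ statement), whereas the paper's computation reproves that case en passant and never needs to manipulate the densities $\theta$. Two small remarks: (i) the mutual absolute continuity and the identity $Dv=\alpha\widetilde w^{\,\alpha-1}Dw$ are more than you need — the one-sided bound $|Dv|\le L\,|Dw|$, valid for any Lipschitz $\Phi$ without invoking $D^jw=0$, already suffices to transfer the $|Dw|$-a.e.\ equality of the $\theta$'s to $|Dv|$-a.e.; (ii) extending $s\mapsto s^\alpha$ by constants creates corners of $\Phi$ at $s=a$ and $s=\|u\|_{L^\infty(\Omega)}$, so either extend by tangent lines to get a $C^1$ Lipschitz $\Phi$, or note that $|\widetilde Dw|$ does not charge preimages of Lebesgue-null sets of values, so $\Phi'(\widetilde w)$ is defined $|\widetilde Dw|$-a.e.\ anyway. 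Neither point affects the validity of the proof.
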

	\begin{proof}
		Let $\overline{T}_{a}(u)=T_{a}^{\infty}(u)^{\alpha}-a^{\alpha}$. Since $D(T_{a}^{\infty}(u))^{\alpha}=D\overline{T}_{a}(u)$ and since $\chi_{\{u>a\}}\in BV(\Omega)\cap L^\infty(\Omega)$ for almost every $a>0$\bk, we get
		\begin{align*}
			\left(z \chi_{\{u>a\}}, D(T^\infty_a(u))^{\alpha}\right)&=\left(z \chi_{\{u>a\}}, D\overline{T}_{a}(u)\right)\\
			&\stackrel{\eqref{= misure}}{=}\operatorname{div}\left(\overline{T}_{a}(u)z\chi_{\{u>a\}}\right)-\overline{T}_{a}(u)\operatorname{div}(z\chi_{\{u>a\}})\\
			&=\operatorname{div}\left(\overline{T}_{a}(u)z\right)-\overline{T}_{a}(u)\operatorname{div}(z\chi_{\{u>a\}})\\
			&\stackrel{\eqref{= misure}}{=}\operatorname{div}\left(\overline{T}_{a}(u)z\right)-\overline{T}_{a}(u)\operatorname{div}z-\overline{T}_{a}(u)(z,D\chi_{\{u>a\}})\\
			&=\operatorname{div}\left(\overline{T}_{a}(u)z\right)-\overline{T}_{a}(u)\operatorname{div}z= \left(z, D(T^{\infty}_a(u))^{\alpha}\right),
		\end{align*}	
		where we also used that  $ (z,D\chi_{\{u>a\}})\ll |D\chi_{\{u>a\}}|$  and the fact that $\overline{T}_{a}(u)=0\,|D\chi_{\{u>a\}}|$-a.e.  in $\Omega$ since $\mathcal{H}^{N-1}(S_u^*)=0$. \bk This concludes the proof.
	\end{proof}
	Finally, let us recall the following result which is proven in \cite[Lemma 2.6]{GMP}. \bk
	\begin{lemma}\label{l.DBV}
		Let $z \in \DM(\Omega)$ and let $u,v \in DBV(\Omega) \cap L^\infty(\Omega)$. Then
		\begin{equation}\label{uscire sx}
			\left(uz, Dv\right)=u\left(z, Dv\right),  
		\end{equation}
		and 		
		\begin{equation}\label{uscire prod}
			\left(z, D(uv)\right)=u\left(z, Dv\right)+v\left(z, Du\right)=\left(uz, Dv\right)+\left(vz, Du\right)
		\end{equation}
		as measures. 
	\end{lemma}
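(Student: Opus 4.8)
\textbf{Strategy: reduce everything to \eqref{uscire sx}.} The plan is to first check well-posedness and then reduce the whole statement to the single identity $\left(uz,Dv\right)=u\left(z,Dv\right)$. Since $u,v\in L^{\infty}(\Omega)$ and $z\in L^{\infty}(\Omega)^N$, the fields $uz$, $vz$, $uvz$ lie in $L^{\infty}(\Omega)^N$, and by \eqref{= misure} one has $\operatorname{div}(uz)=u^{*}\operatorname{div}z+(z,Du)$, which is a finite Radon measure because $u^{*}$ is bounded and $|\operatorname{div}z|$-a.e. defined (recall $\operatorname{div}z\ll\mathcal{H}^{N-1}$ while $u^*$ is $\mathcal{H}^{N-1}$-a.e. defined) and $(z,Du)$ is a finite Radon measure by Lemma \ref{campo prodotto}; hence $uz,vz,uvz\in\DM(\Omega)$ and all pairings below make sense. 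Applying \eqref{= misure} in turn to the field $z$ and the function $uv$ (which is in $DBV(\Omega)\cap L^\infty(\Omega)$, with $(uv)^{*}=u^{*}v^{*}$ $\mathcal{H}^{N-1}$-a.e., hence $\operatorname{div}z$-a.e.), to the field $uz$ and the function $v$, and to the field $vz$ and the function $u$, one gets
\begin{align*}
	\operatorname{div}(uvz)&=u^{*}v^{*}\operatorname{div}z+(z,D(uv))\\
	&=u^{*}v^{*}\operatorname{div}z+v^{*}(z,Du)+(uz,Dv)\\
	&=u^{*}v^{*}\operatorname{div}z+u^{*}(z,Dv)+(vz,Du),
\end{align*}
so that $(z,D(uv))=(uz,Dv)+v^{*}(z,Du)=(vz,Du)+u^{*}(z,Dv)$ as measures. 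Now $(z,Dv)\ll|Dv|$ by \eqref{ec:2}, and $|Dv|$ does not charge $S_u$: indeed $v\in DBV(\Omega)$ gives $D^{j}v=0$, so $|Dv|=|D^{a}v|+|D^{c}v|$, while $S_u$ is $\mathcal L^N$-negligible and $\mathcal{H}^{N-1}(S_u)=0$ forces $|D^{a}v|(S_u)=|D^{c}v|(S_u)=0$; hence $u\,(z,Dv)=u^{*}(z,Dv)$ and, symmetrically, $v\,(z,Du)=v^{*}(z,Du)$. Therefore \eqref{uscire prod} follows at once from \eqref{uscire sx}, and the lemma is reduced to proving the latter.

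\textbf{Proof of \eqref{uscire sx}.} For a field $w\in\DM(\Omega)$ and $\psi\in C^{1}_{c}(\Omega)$, testing \eqref{pairing} against $\varphi\in C^{1}_{c}(\Omega)$ and integrating by parts in $\int\psi\varphi\,\operatorname{div}w$ yields $(w,D\psi)=(w\cdot\nabla\psi)\,\mathcal{L}^{N}$; in particular $(uz,D\psi)=(uz\cdot\nabla\psi)\,\mathcal{L}^{N}=u\,(z\cdot\nabla\psi)\,\mathcal{L}^{N}=u\,(z,D\psi)$, and the same holds for $\psi\in W^{1,1}_{0}(\Omega)\cap L^{\infty}(\Omega)$ by a routine density argument. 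For a general $v\in DBV(\Omega)\cap L^{\infty}(\Omega)$ I would approximate $v$ by symmetric mollifications $v_{n}\in C^{\infty}(\Omega)$, truncated so that $\|v_{n}\|_{L^{\infty}(\Omega)}\le\|v\|_{L^{\infty}(\Omega)}$; then $v_{n}\to v$ in $L^{1}_{\rm loc}(\Omega)$ and $v_{n}=v_{n}^{*}\to v^{*}$ pointwise on $L_v$, i.e. $\mathcal{H}^{N-1}$-a.e. in $\Omega$, since $v\in DBV(\Omega)$ gives $\mathcal{H}^{N-1}(S_v)=0$. Passing to the limit in the two terms of \eqref{pairing} — using $v_n^*\to v^*$ pointwise $|\operatorname{div}z|$-a.e. (resp. $|\operatorname{div}(uz)|$-a.e., which is legitimate because $|Du|(S_v)=0$ as above) together with dominated convergence in the first term, and $v_n\to v$ in $L^1_{\rm loc}$ in the second — one obtains $(z,Dv_{n})\overset{*}{\rightharpoonup}(z,Dv)$ and $(uz,Dv_{n})\overset{*}{\rightharpoonup}(uz,Dv)$ as Radon measures on every $\omega\Subset\Omega$. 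On the other hand, the base case applied to the smooth $v_{n}$ gives $(uz,Dv_{n})=(uz\cdot\nabla v_{n})\,\mathcal{L}^{N}=u^{*}(z,Dv_{n})$ for every $n$; hence $u^{*}(z,Dv_{n})\overset{*}{\rightharpoonup}(uz,Dv)$, and it remains to identify the limit as $u^{*}(z,Dv)=u\,(z,Dv)$. (Equivalently, one may work directly with Radon–Nikod\'ym densities with respect to $|Dv|$: on $(z,Dv)^{a}=(z\cdot\nabla v)\,\mathcal L^N$ the identity is trivial, and on the Cantor part one combines the coarea decomposition of the pairing with the chain rule \eqref{chain rule pairing} and the existence of an approximate limit $u^*$ for $u$ at $|D^cv|$-a.e. point.) In either form this is exactly \cite[Lemma 2.6]{GMP}, whose argument I would follow.

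\textbf{Main obstacle.} The delicate point is precisely the convergence $u^{*}(z,Dv_{n})\overset{*}{\rightharpoonup}u^{*}(z,Dv)$ — equivalently, the control of the Cantor part of $Dv$: one multiplies a merely weakly-$*$ convergent sequence of measures by the precise representative $u^{*}$, which is only $\mathcal{H}^{N-1}$-a.e. defined and not continuous. This step cannot be deduced formally from \eqref{= misure} (which only yields the reduction of the first paragraph) and is the place where the hypothesis $u,v\in DBV(\Omega)$ — so that $|Dv|$ has no jump part, does not charge $S_u$, and $u$ admits a genuine approximate limit $|Dv|$-a.e. — is used in an essential way.
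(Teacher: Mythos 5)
The paper does not actually prove this lemma: it is recalled verbatim with the citation \cite[Lemma 2.6]{GMP}, so there is no in-paper argument to compare against, and your proposal has to be judged on its own terms. The first part of your argument --- reducing \eqref{uscire prod} to \eqref{uscire sx} by expanding $\operatorname{div}(uvz)$ in three ways via the Leibniz rule \eqref{= misure}, and observing that $u^*(z,Dv)$ coincides with $u(z,Dv)$ because $(z,Dv)\ll|Dv|$, $D^jv=0$, and $\mathcal{H}^{N-1}(S_u)=0$ so that $|Dv|$ does not charge $S_u$ --- is correct and complete; this is the standard way to obtain the product rule once \eqref{uscire sx} is available, and it is a genuine addition relative to the paper, which offers nothing. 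The second part is only a sketch: you correctly isolate the one nontrivial step, namely justifying $u^*(z,Dv_n)\rightharpoonup u^*(z,Dv)$ weakly-$*$ (equivalently, handling the Cantor part of $Dv$, where $u^*$ is neither continuous nor everywhere defined), but you do not carry it out and instead defer to \cite[Lemma 2.6]{GMP}. Since that is exactly what the paper does for the entire statement, this is not a divergence from the source; still, the mollification route as written is not self-contained at that point, and the parenthetical alternative you mention --- working with Radon--Nikod\'ym densities with respect to $|Dv|$, the locality of the pairing, and the chain rule \eqref{chain rule pairing}, in the spirit of \cite{CDC} --- is closer to how the cited references actually close the argument and would be the cleaner way to complete your proof.
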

	
	\section{Statement of the main result for nonnegative data}\label{s.existence}
	Let us explain the concept of distributional solution for problem \eqref{prob} in case of a nonnegative datum. 
	\begin{defin}\label{def sol}
		Assume $m>0$ and $0\leq f\in L^{N,\infty}(\Omega)$. A nonnegative function  $u \in DTBV(\Omega) \cap L^\infty(\Omega)$ is a distributional solution to \eqref{prob}, if there exists a vector field $ w \in L^\infty(\Omega)^N$ such that $\|w\|_{L^\infty(\Omega)^N} \le 1$ and the vector field  $z:=u^m w \in \DM(\Omega)$ is such that
		\begin{equation}\label{sol 1}
			-\operatorname{div}z=f \quad \text{as measures in $\Omega$,}
		\end{equation}
		\begin{equation}\label{sol 2 b} 		
			\left(z, DT^\infty_a(u)\right) = \frac{1}{m+1}|DT^\infty_a(u)^{m+1}| \  \text{as measures in $\Omega$ for a.e. $a>0$,\bk}
		\end{equation}
		and
		\begin{equation}\label{sol 3b}
			\left[z, \nu\right]=-(u^{\Omega})^m \quad \text{$\mathcal{H}^{N-1}$-a.e. on $\partial \Omega \cap \{u>0\}$.}
		\end{equation}
	\end{defin}
	\begin{remark}
		Let us provide a more detailed explanation of the meaning behind Definition \ref{def sol}.
		It is worth noting that formula \eqref{sol 2 b}  illustrates the role of the vector field $w$ as the singular quotient $|Du|^{-1}Du$ and, similarly, the way the vector field $z$ assumes the role of  $u^m |Du|^{-1}Du$ in a weak sense.  We highlight that \eqref{sol 2 b} is equivalent to $$\left(z, DT^\infty_a(u)\right)=T^\infty_a(u)^m|DT^\infty_a(u)| \quad \text{as measures, for a.e.  $a>0$,}$$ as an application of \eqref{chain rule senza j} since $u \in DTBV(\Omega)$. 
		
		\smallskip 
		Finally it is worth mentioning that, as $u$ is not in  $BV$ up to the boundary of $\Omega$, in general one could suspect that condition \eqref{sol 3b} is not well defined; however,  we recall that Lemma \ref{lemma trace pos}, Definition \ref{defin set bd} and Lemma \ref{lemma well posed bd} ensure the existence of a trace for functions in $TBV(\Omega)$ and a meaning to $\{u>0\}\cap\partial\Omega$. 
	\end{remark}

	Now we state the main result of this section.
	\begin{theorem}\label{teo f N}
		Assume $m>0$ and  let  $0\le f \in L^{N,\infty}(\Omega)$. Then there exists a solution  $u$ to \eqref{prob}  in the sense of Definition  \ref{def sol}. In particular,  if $f \not \equiv 0$, then $u \not \equiv 0$.
	\end{theorem}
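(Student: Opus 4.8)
The plan is to follow the approximation scheme outlined in the introduction, working with the regularized problems \eqref{e.problemapprox2}. First I would invoke classical theory (e.g.\ Leray--Lions / Schauder together with the strict monotonicity and coercivity coming from the $\varepsilon\nabla u_\varepsilon$ term) to produce, for each $\varepsilon>0$, a weak solution $u_\varepsilon\in H^1_0(\Omega)$ of \eqref{e.problemapprox2}. Testing with $u_\varepsilon$ itself, using $|u_\varepsilon|^m|\nabla u_\varepsilon|^2/|\nabla u_\varepsilon|_\varepsilon\ge 0$, and combining the Sobolev inequality \eqref{des lorentz} with the Hölder inequality in Lorentz spaces \eqref{holder lor} applied to $\int f u_\varepsilon\le C\|f\|_{L^{N,\infty}}\|u_\varepsilon\|_{L^{1^*,1}}$, should give a uniform bound on $\varepsilon\|\nabla u_\varepsilon\|_{L^2}^2$. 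The crucial nonlinear estimate is the one on $|u_\varepsilon|^{m+1}$: testing \eqref{e.problemapprox2} with a power of $u_\varepsilon$ (the natural choice being something like $T_k(|u_\varepsilon|^{m}\operatorname{sgn}u_\varepsilon)$ or a truncation of $u_\varepsilon$ itself, as in \cite{GMP}) produces, after the chain rule, a term controlling $\int|\nabla(|u_\varepsilon|^{m+1})|/|\nabla u_\varepsilon|_\varepsilon\cdot|\nabla u_\varepsilon|$ from below and a right-hand side again estimated via \eqref{holder lor}. Letting $\varepsilon\to 0$ in this bound, together with the Sobolev embedding \eqref{e.Lstar}, is what yields both the $L^\infty$ bound on $u_\varepsilon$ (this is the place where the regularizing effect of $|u|^m$ enters and where no smallness of $\|f\|_{L^{N,\infty}}$ is needed) and the uniform $BV$ bound on $|u_\varepsilon|^{m+1}$.

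\textbf{Passage to the limit.} Once $\{|u_\varepsilon|^{m+1}\}$ is bounded in $BV(\Omega)$ and $\{u_\varepsilon\}$ in $L^\infty(\Omega)$, the compact embedding $BV(\Omega)\hookrightarrow L^r(\Omega)$ for $r<1^*$ gives $|u_\varepsilon|^{m+1}\to v$ strongly in $L^r$ and a.e., hence $u_\varepsilon\to u$ a.e.\ (and in every $L^r$ with $r<1^*(m+1)$) for some $u$ with $|u|^{m+1}=v\in BV(\Omega)$; by the $TBV$ characterization via truncations and the a.e.\ convergence one checks $u\in DTBV(\Omega)$, using Lemma \ref{salti in TBV} and lower semicontinuity \eqref{sci BV} on the truncated functions $T^\infty_a(u_\varepsilon)^{m+1}$ to control jump parts. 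For the vector field, set $z_\varepsilon:=|u_\varepsilon|^m\nabla u_\varepsilon/|\nabla u_\varepsilon|_\varepsilon+\varepsilon\nabla u_\varepsilon$; the bound $\varepsilon\|\nabla u_\varepsilon\|_{L^2}^2\le C$ makes the second piece vanish in $L^1$ (or $L^{2}$ after multiplying by $\sqrt\varepsilon$), while the first piece is bounded in $L^\infty$ by $\|u_\varepsilon\|_\infty^m$, so up to a subsequence $z_\varepsilon\rightharpoonup z$ weakly-$*$ in $L^\infty(\Omega)^N$ with $\|z\|_{L^\infty}\le\limsup\|u_\varepsilon\|_\infty^m\le\|u\|_\infty^m$; writing $z=u^m w$ with $\|w\|_\infty\le 1$ then makes sense. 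Since $-\operatorname{div}z_\varepsilon=f$ in the distributional sense for every $\varepsilon$, passing to the limit gives $-\operatorname{div}z=f$ as measures, i.e.\ \eqref{sol 1}, and in particular $z\in\DM(\Omega)$.

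\textbf{The pairing identities.} The heart of the matter is proving \eqref{sol 2 b} and the boundary condition \eqref{sol 3b}. For \eqref{sol 2 b} I would, for a.e.\ $a>0$, work with the truncation $T^\infty_a(u_\varepsilon)^{m+1}$: on one side, lower semicontinuity of the total variation gives $\int_\Omega|DT^\infty_a(u)^{m+1}|\le\liminf\int_\Omega|\nabla T^\infty_a(u_\varepsilon)^{m+1}|$; on the other side, multiplying \eqref{e.problemapprox2} by an appropriate test function supported where $u_\varepsilon>a$ and passing to the limit using \eqref{def pair div l1}--\eqref{int per parti L1} yields $\int_\Omega(z,DT^\infty_a(u)^{m+1})\ge$ that same liminf (up to the constant $m+1$), while the reverse inequality $\int_\Omega(z,DT^\infty_a(u))\le T^\infty_a(u)^m|DT^\infty_a(u)|$ is automatic from $|z|\le|u|^m\cdot 1$ and $|(z,D\cdot)|\ll|D\cdot|$ via \eqref{ec:2}; Lemma \ref{Tu z parring} is used to legitimately localize to $\{u>a\}$, and \eqref{chain rule pairing}/\eqref{chain rule senza j} to pass between $T^\infty_a(u)$ and its $(m+1)$-power. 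The boundary identity \eqref{sol 3b} follows by comparing the Green-type formula \eqref{int per parti L1} applied to $z$ and $T^\infty_a(u)^{m+1}$ with the interior identity just established, letting $a\to 0^+$ and using Lemma \ref{lemma trace pos} ($F(u^\Omega)=(F(u))^\Omega$ with $F(t)=t^{m+1}$) to identify the boundary trace; the set $\{u>0\}\cap\partial\Omega$ is well-defined by Definition \ref{defin set bd} and Lemma \ref{lemma well posed bd}. I expect the \emph{main obstacle} to be exactly this identification of the pairing measure $(z,DT^\infty_a(u))$ with $\tfrac{1}{m+1}|DT^\infty_a(u)^{m+1}|$ — i.e.\ showing the limit vector field $z$ is ``parallel to $Du$'' — since the approximate fields $z_\varepsilon$ only converge weakly-$*$ and one must upgrade this using the energy/lower-semicontinuity balance together with the delicate $TBV$ pairing machinery of Lemmas \ref{campo prodotto}, \ref{Tu z parring} and \ref{l.DBV}. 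Finally, non-triviality when $f\not\equiv 0$: if $u\equiv 0$ then $z=0$, contradicting $-\operatorname{div}z=f\not\equiv 0$; this is immediate from \eqref{sol 1} and $\|z\|_\infty\le\|u\|_\infty^m$.
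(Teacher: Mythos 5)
Your overall skeleton coincides with the paper's (regularize via \eqref{e.problemapprox2}, derive the $L^\infty$ bound and the $BV(\Omega)$ bound on $|u_\varepsilon|^{m+1}$ with no smallness of $\|f\|_{L^{N,\infty}(\Omega)}$, pass to the limit, identify the pairing, recover the boundary trace), but two steps as you describe them would not go through.

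First, the factorization $z=u^m w$. You extract $z$ as the weak-$*$ limit of the full flux $z_\varepsilon$ and then argue from $\|z\|_{L^\infty(\Omega)^N}\le\|u\|^m_{L^\infty(\Omega)}$ that ``writing $z=u^mw$ with $\|w\|_\infty\le1$ makes sense''. It does not: a global sup bound gives no control of $|z(x)|$ where $u(x)$ is small or vanishes, and the \emph{pointwise} inequality $|z(x)|\le u(x)^m$ a.e.\ is exactly what is needed later (for the reverse inequality in \eqref{sol 2 blem} via \eqref{ec:2}, and for the normal-trace bounds \eqref{dis per psi} and \eqref{equ brack uno}). The paper sidesteps this by splitting $z_\varepsilon=u_\varepsilon^m w_\varepsilon+\varepsilon\nabla u_\varepsilon$ with $w_\varepsilon=\nabla u_\varepsilon/|\nabla u_\varepsilon|_\varepsilon$, taking the weak-$*$ limit of $w_\varepsilon$ \emph{alone} (so $\|w\|_\infty\le 1$ is automatic), and using the strong $L^1$ convergence of $u_\varepsilon^m$ to pass to the limit in the product; $z:=u^mw$ is then a definition, not a claim.

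Second, and more seriously, you propose to check $u\in DTBV(\Omega)$ ``using Lemma \ref{salti in TBV} and lower semicontinuity \eqref{sci BV} on the truncated functions to control jump parts''. Lower semicontinuity of the total variation along the $H^1_0$ approximations gives no information about the jump part of the limit: a $BV$ limit of Sobolev functions can perfectly well develop a nontrivial jump set. The absence of jumps ($\mathcal{H}^{N-1}(S^*_u)=0$, Lemma \ref{u reg}) is a consequence of the \emph{equation}, not of compactness: since $\operatorname{div}z=-f$ is absolutely continuous with respect to $\mathcal{H}^{N-1}$, Lemma \ref{lem sigma} forces the two normal traces of $z$ on $J_{T^\infty_a(u)}$ to coincide, and combining this with the pairing inequality \eqref{e.totvar} and the trace bound \eqref{dis per psi} yields $\left(T^\infty_a(u)^{m+1}\right)^+=\left(T^\infty_a(u)^{m+1}\right)^-$ $\mathcal{H}^{N-1}$-a.e.\ on the jump set. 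Without this step you cannot invoke \eqref{chain rule senza j} (valid only when $D^j=0$) to turn the inequality of Lemma \ref{l.totvar} into the equality \eqref{sol 2 b}. The remainder of your outline --- the two-sided pairing estimate, the localization via Lemma \ref{Tu z parring}, and the boundary identification by letting $a\to0$ --- is consistent in spirit with Lemmas \ref{c.corototvar}, \ref{lemma dis al bordo p} and \ref{l.boundary}, though the paper's boundary argument additionally needs the one-parameter family of test functions $\bigl(\tfrac{T^\infty_a(u_\varepsilon)^{mq}}{q}-\tfrac{a^{mq}}{q}\bigr)\varphi$ with $\varphi\in C^1(\overline\Omega)$ and the limit $q\to\infty$.
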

	\begin{remark}\label{rem_null}
		Let us stress that, in contrast with the case of the $1$-Laplacian (see for instance \cite{CT, MST1}), a solution in the sense of Definition \ref{def sol} can not be null once  $f \not \equiv 0$ as $z=u^m w$ and \eqref{sol 1} is in force.
		We also  emphasize that the existence of a non-trivial bounded  solution is obtained  regardless of any   smallness assumptions of $\|f\|_{L^{N,\infty}(\Omega)}$, again in contrast with the  $0$-homogeneous case, i.e.   $m=0$. 
		
	\end{remark}
	\section{Approximating problems and basic estimates}\label{s.approxsolutions}	
	
	\subsection{Existence for the perturbed problem} Following an  idea in \cite{GMP},  the proof of Theorem \ref{teo f N} will be performed by approximating \eqref{prob} with the smooth perturbed problem 
	\begin{equation}\label{e.problemapprox}
		\begin{cases}
			\dis-\operatorname{div}\left(|u|^{m}\frac{\nabla u}{|\nabla u|_\varepsilon}+\varepsilon \nabla u\right)= f &\text{in $\Omega$,} \\
			u =  0 &\text{ on 
				$\partial \Omega$},
		\end{cases}
	\end{equation}
	where $$|\xi|_{\varepsilon}=\sqrt{|\xi|^2+\varepsilon^2}\text{ for all }\xi\in \mathbb{R}^{N}.$$
	Notice,  in particular, that for any $\varepsilon>0$, one has 
	\begin{equation}\label{dis norm}
		\frac{|\xi|^2}{|\xi|_\varepsilon} \ge |\xi|-\varepsilon, \quad \text{for any $\xi \in \mathbb{R}^N$.}
	\end{equation} 
	Let us state and prove   the existence of a solution  for problem \eqref{e.problemapprox} in the general case of a datum in $H^{-1}(\Omega)$ (observe that $L^{N, \infty} (\Omega)$ embeds into $H^{-1}(\Omega)$).
	\begin{lemma}\label{l.lemmaapprox}
		If $f\in H^{-1}(\Omega)$, then for every $\varepsilon>0$ there exists a unique solution $u_{\varepsilon}\in H_{0}^{1}(\Omega)\cap L^{\infty}(\Omega)$ for the problem \eqref{e.problemapprox} in the sense that 
		\begin{equation}\label{e.weaksol}
			\int_{\Omega}\left(|u_\varepsilon|^m \frac{\nabla u_\varepsilon}{|\nabla u_\varepsilon|_\varepsilon}+\varepsilon \nabla u_\varepsilon\right) \cdot \nabla\varphi=\int_{\Omega}f\varphi \quad \text{ for all $\varphi\in H_{0}^{1}(\Omega)$.}
		\end{equation}  
		Moreover, if $f\geq0$, then $u_{\varepsilon}\geq0$.
	\end{lemma}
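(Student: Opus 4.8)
The plan is to get rid of the unbounded coefficient $|u|^m$ by a truncation, solve the resulting problem by monotone operator methods, and then discard the truncation thanks to an $L^\infty$ bound produced by the coercive term $\varepsilon\nabla u_\varepsilon$. Fix $\varepsilon>0$. For $\lambda>0$ I would replace $|s|^m$ by the bounded continuous function $b_\lambda(s):=(\min\{|s|,\lambda\})^m\le\lambda^m$ and consider the Carathéodory field $a_\lambda(x,s,\xi):=b_\lambda(s)\,\xi/|\xi|_\varepsilon+\varepsilon\xi$. It obeys the classical Leray--Lions structure conditions at the $H^1_0$ level: the growth bound $|a_\lambda(x,s,\xi)|\le\lambda^m+\varepsilon|\xi|$; the coercivity $a_\lambda(x,s,\xi)\cdot\xi\ge\varepsilon|\xi|^2$ (where the $\varepsilon\nabla u$ term is decisive); and, since $\xi\mapsto\xi/|\xi|_\varepsilon$ is the gradient of the strictly convex map $\xi\mapsto|\xi|_\varepsilon$, hence monotone, the strict monotonicity in $\xi$, namely $\big(a_\lambda(x,s,\xi)-a_\lambda(x,s,\eta)\big)\cdot(\xi-\eta)\ge\varepsilon|\xi-\eta|^2$. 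The associated operator $A_\lambda\colon H^1_0(\Omega)\to H^{-1}(\Omega)$, $\langle A_\lambda u,\varphi\rangle=\int_\Omega a_\lambda(x,u,\nabla u)\cdot\nabla\varphi$, is then bounded, coercive and pseudo-monotone, so the Leray--Lions existence theorem yields a solution $u_{\varepsilon,\lambda}\in H^1_0(\Omega)$ of $A_\lambda u_{\varepsilon,\lambda}=f$; alternatively one may freeze the coefficient, solve the ensuing monotone problem by Browder--Minty, and conclude via Schauder's fixed point theorem.

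The crucial point is a priori $L^\infty$ bound, uniform in $\lambda$. Testing $A_\lambda u_{\varepsilon,\lambda}=f$ with $G_k(u_{\varepsilon,\lambda})\in H^1_0(\Omega)$ and using $b_\lambda\ge0$ together with $\nabla u_{\varepsilon,\lambda}\cdot\nabla G_k(u_{\varepsilon,\lambda})=|\nabla G_k(u_{\varepsilon,\lambda})|^2$, the flux term carries the favourable sign, so
\[
\varepsilon\int_\Omega|\nabla G_k(u_{\varepsilon,\lambda})|^2\le\langle f,G_k(u_{\varepsilon,\lambda})\rangle.
\]
For $f\in L^{N,\infty}(\Omega)$ (which embeds into $H^{-1}(\Omega)$, and is the datum of interest), the Lorentz--Hölder inequality \eqref{holder lor} with exponents $(N,\infty)$, $(1^*,1)$ and the Sobolev--Lorentz inequality \eqref{des lorentz} bound the right-hand side by $C\|f\|_{L^{N,\infty}(\Omega)}\,|\{|u_{\varepsilon,\lambda}|>k\}|^{1/2}\,\|\nabla G_k(u_{\varepsilon,\lambda})\|_{L^2(\Omega)}$; absorbing one power of the gradient, using the embedding $W_0^{1,1}(\Omega)\hookrightarrow L^{1^*}(\Omega)$, and iterating à la Stampacchia leads to $\|u_{\varepsilon,\lambda}\|_{L^\infty(\Omega)}\le M_\varepsilon$ with $M_\varepsilon$ depending only on $N$, $|\Omega|$, $\|f\|_{L^{N,\infty}(\Omega)}$ and $\varepsilon$, but \emph{not} on $\lambda$ (this constant will in general blow up as $\varepsilon\to0^+$, the $\varepsilon$-uniform bounds being the object of the subsequent estimates).

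Choosing $\lambda>M_\varepsilon$ we get $b_\lambda(u_{\varepsilon,\lambda})=|u_{\varepsilon,\lambda}|^m$, so $u_\varepsilon:=u_{\varepsilon,\lambda}\in H^1_0(\Omega)\cap L^\infty(\Omega)$ solves \eqref{e.weaksol} (which is meaningful since $|u_\varepsilon|^m\le\|u_\varepsilon\|_{L^\infty(\Omega)}^m$). If $f\ge0$, testing \eqref{e.weaksol} with $u_\varepsilon^-=\max\{-u_\varepsilon,0\}\in H^1_0(\Omega)$ and noting that on $\{u_\varepsilon<0\}$ one has $\nabla u_\varepsilon\cdot\nabla u_\varepsilon^-=-|\nabla u_\varepsilon^-|^2$, the left-hand side is $\le-\varepsilon\int_\Omega|\nabla u_\varepsilon^-|^2$ while the right-hand side $\int_\Omega f\,u_\varepsilon^-\ge0$; hence $\nabla u_\varepsilon^-\equiv0$ and, by Poincaré, $u_\varepsilon\ge0$. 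For uniqueness, any $H^1_0$ solution $v$ of \eqref{e.weaksol} satisfies, by the same Stampacchia argument, $\|v\|_{L^\infty(\Omega)}\le M_\varepsilon$, hence solves $A_\lambda v=f$ for $\lambda>M_\varepsilon$ as well; subtracting the equations for two such solutions, testing with their difference and using the strict monotonicity of $\xi\mapsto\xi/|\xi|_\varepsilon+\varepsilon\xi$, the remaining term coming from the $u$-dependence of the coefficient has to be carefully absorbed, which yields $v=u_\varepsilon$.

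I expect the main obstacle to be the $L^\infty$ estimate and, specifically, making it \emph{independent of the truncation parameter} $\lambda$ so that the regularization can actually be discarded; this is exactly where the Lorentz regularity of $f$ (through \eqref{holder lor}--\eqref{des lorentz}) and the coercivity supplied by $\varepsilon\nabla u_\varepsilon$ enter. A second, more delicate point is the control of the contribution of the coefficient $|u|^m$ in the uniqueness argument, this coefficient not being monotone in $u$.
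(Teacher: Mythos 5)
Your existence argument is essentially the paper's: the same truncation of the coefficient $|s|^m$ (the paper uses $T_{1/\delta}(|s|)^m$, you use $(\min\{|s|,\lambda\})^m$), the same structure conditions (boundedness, coercivity supplied by the $\varepsilon\xi$ term, monotonicity in $\xi$ coming from convexity of $\xi\mapsto|\xi|_\varepsilon$) feeding into a pseudo-monotone existence theorem (the paper cites Browder's Corollary~1), the same Stampacchia bound obtained by testing with $G_k$ of the truncated solution --- uniform in the truncation parameter because the flux term is simply dropped by sign --- and the same choice of the truncation level above the resulting $L^\infty$ bound to remove the truncation. The nonnegativity argument via $u_\varepsilon^-$ is also identical. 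All of this is correct.

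The one genuine gap is uniqueness. Your sketch --- subtract the equations for two bounded solutions $u,v$, test with $u-v$, use monotonicity of $\xi\mapsto\xi/|\xi|_\varepsilon+\varepsilon\xi$, and ``carefully absorb'' the term coming from the $s$-dependence of the coefficient --- does not close as written. The leftover term is
\begin{equation*}
\int_\Omega\bigl(|u|^m-|v|^m\bigr)\frac{\nabla v}{|\nabla v|_\varepsilon}\cdot\nabla(u-v),
\end{equation*}
which is at best controlled by $C\|u-v\|_{L^2(\Omega)}\|\nabla(u-v)\|_{L^2(\Omega)}$ (and even the Lipschitz bound on $s\mapsto|s|^m$ over the bounded range requires $m\ge1$, with a separate discussion needed for $0<m<1$); Young's inequality then leaves $\tfrac{C^2}{2\varepsilon}\|u-v\|_{L^2(\Omega)}^2$ on the wrong side with no sign or smallness to exploit, so $u=v$ does not follow without an additional idea (a Gronwall-type or comparison argument, say). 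The paper does not run this computation at all: it obtains uniqueness directly from the quoted Browder result applied to the truncated problem, and transfers it to the original problem via the truncation-independent $L^\infty$ bound, since any solution in $H^1_0(\Omega)\cap L^\infty(\Omega)$ with norm below the truncation level also solves the truncated problem. You should either invoke such a result or actually supply the missing absorption argument; as it stands, the uniqueness part of your proposal is an acknowledged but unresolved placeholder.
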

	\begin{proof} 
		The proof of this Lemma is a straightforward adaptation of \cite[Lemma 3.1]{GMP}, but for completeness, we sketch  the main steps.
		\\Let $\delta>0$ and  consider the following truncated problem 
		\begin{equation} \label{e.problemapproxaux}
			\begin{cases}
				-\operatorname{div}\left( A_{\delta}(u, \nabla u) \right)= f & \text{ in $\Omega$,} \\
				u =  0 & \text{on
					$\partial \Omega$,}
			\end{cases}
		\end{equation}
		where the operator $A_{\delta}:\mathbb{R}\times\mathbb{R}^{N} \to \mathbb{R}^N$ is defined by
		\begin{equation}\label{oper}
			A_{\delta}(s,\xi):=T_{1/\delta}(|s|)^m\frac{\xi}{|\xi|_\varepsilon}+\varepsilon  \xi \quad \text{for any $(s,\xi) \in   \mathbb{R} \times \mathbb{R}^N$,}
		\end{equation}
		with $T_{\frac{1}{\delta}}(s)$ being  defined in \eqref{troncata}.
		Problem \eqref{e.problemapproxaux} admits a unique solution, as guaranteed by \cite[Corollary 1]{B}. This result is attributed to the properties of the operator defined in formula \eqref{oper}, which are
		\begin{itemize}
			\item \textbf{Boundedness}
			$$
			|A_{\delta}(s,\xi)|\leq C+\varepsilon|\xi| \quad \text{for any $(s,\xi) \in   \mathbb{R} \times \mathbb{R}^N$,}
			$$
			where $C>0$ is a constant depending on $\varepsilon$, $\delta$ and $m$.
			\item \textbf{Monotonicity}
			\begin{equation*}
				(A_{\delta}(s,\xi_{1})-A_{\delta}(s,\xi_{2}))\cdot(\xi_{1}-\xi_{2})>0 \quad \text{ for any $(s, \xi_i) \in  \mathbb{R} \times \mathbb{R}^N$, with $i=1,2$ and $\xi_{1}\neq\xi_{2}$,}
			\end{equation*}
			which follows from the convexity of the associated Lagrangian $\mathcal{L}$ given by
			$$\mathcal{L}(s,\xi):=T_{1/\delta}( |s|\bk)^{m}\left(|\xi|^2 + \varepsilon^2\right)^{\frac{1}{2}}+\frac{\varepsilon}{2}|\xi|^2 \quad \text{for any $(s,\xi) \in  \mathbb{R} \times \mathbb{R}^N$.}$$
			\item \textbf{Coercivity}
			$$
			A_{\delta}(s,\xi)\cdot\xi \ge \varepsilon |\xi|^2 \quad \text{for any $(s , \xi ) \in   \mathbb{R} \times \mathbb{R}^N$.}$$
		\end{itemize}

		We  emphasize that we have found a solution, denoted as $u_{\varepsilon, \delta}$, which depends on both the parameters $\varepsilon$ and $\delta$.
		\\In particular, choosing $G_k(u_{\varepsilon, \delta})\in H^1_0(\Omega)$ as test function in the weak formulation of problem \eqref{e.problemapproxaux} yields
		$$\varepsilon \int_\Omega |\nabla G_k(u_{\varepsilon, \delta})|^2 \le \int_\Omega f G_k(u_{\varepsilon, \delta}).$$
		Applying Stampacchia's method (see \cite{S}), we get $$\|u_{\varepsilon, \delta}\|_{L^\infty(\Omega)} \le C,$$ where $C>0$ is a constant independent of $\delta$. Therefore picking out $\delta < \frac{1}{C}$, one can deduce that  
		$T_{\frac{1}{\delta}}(|u_{\varepsilon, \delta}|)=|u_{\varepsilon, \delta}|$. Thus the function $u_{\varepsilon,\delta}$, which we denote simply by $u_{\varepsilon}$, is the solution of problem \eqref{e.problemapprox} in the weak sense  \eqref{e.weaksol}. 
		Finally if $f \ge 0$,  one can take  $u_{\varepsilon}^{-}=\max\{-u_\varepsilon,0\}$ in \eqref{e.weaksol} as test function, yielding, after dropping a nonpositive term,  to $$ -\varepsilon\int_\Omega |\nabla u^-_\varepsilon|^2 \geq  \int_\Omega f u_\varepsilon^- \ge 0, $$ from which follows that $u_\varepsilon \ge 0$ and this concludes the proof. 
	\end{proof}

	\subsection{A priori estimates and existence of a limit function} In the following result we collect some a priori  estimates on $u_\varepsilon$ which ensure the existence of a limit function $u$ as $\varepsilon$ tends to zero. For the sake of completeness we state them in the slightly general case of datum $f\in L^{\widetilde{m}}(\Omega)$ with 
	\begin{equation}\label{mtilde}
		\widetilde{m}:=\frac{N(m+1)}{Nm+1} < N.  
	\end{equation}
	\begin{lemma}\label{l.stimeLq}
		Assume $m>0$, let $0\le f \in L^{\widetilde{m}}(\Omega)$ with $\widetilde{m}$ given in \eqref{mtilde},  and let $u_{\varepsilon}$ be the solution to \eqref{e.problemapprox}. Let  $C_{\varepsilon}$ be the constant given by\bk 
		\begin{equation}\label{ceps}
			C_{\varepsilon}:=\displaystyle \left(\frac{\|f\|_{L^{\widetilde{m}}(\Omega)}}{\frac{\mathcal{S}_{1}^{-1}}{m+1}-\frac{\varepsilon|\Omega|^{1-\frac{m}{(m+1)1^{*}}}}{\frac{1}{2}(\mathcal{S}_{1}(m+1)\|f\|_{L^{\widetilde{m}}(\Omega)})^{\frac{1}{m}}}}\right)^{\frac{1}{m}}.\bk 
		\end{equation}		
		Then it holds: there exists $\overline{\varepsilon}$ such that
		\begin{equation}\label{e.equexplt1}
			\|u_{\varepsilon}^{m+1}\|_{L^{1^{*}}(\Omega)}\leq C_{\varepsilon}^{m+1}\text{ for all }0<\varepsilon<\overline{\varepsilon},
		\end{equation}
		and
		\begin{equation}\label{stime approx BV}
			\|u_{\varepsilon}^{m+1}\|_{BV(\Omega)}\leq (m+1)\left(\varepsilon|\Omega|^{1-\frac{m}{(m+1)1^{*}}}C_{\varepsilon}^{m}+\|f\|_{L^{\widetilde{m}}(\Omega)}C_{\varepsilon}\right)\text{ for all }0<\varepsilon<\overline{\varepsilon}.
		\end{equation}
		In particular, $u_\varepsilon^{m+1} $ is uniformly bounded in $BV(\Omega)$ for any $0<\varepsilon<\overline{\varepsilon}$.
	\end{lemma}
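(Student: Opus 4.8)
The plan is to derive both estimates at once by testing the weak formulation \eqref{e.weaksol} of the approximating problem \eqref{e.problemapprox} with the solution itself. Since, by Lemma \ref{l.lemmaapprox}, $u_\varepsilon\in H^1_0(\Omega)\cap L^\infty(\Omega)$ and $u_\varepsilon\ge 0$ (as $f\ge0$), the choice $\varphi=u_\varepsilon$ is admissible in \eqref{e.weaksol}; using \eqref{dis norm} multiplied by the nonnegative weight $u_\varepsilon^m$ and integrated, discarding the nonnegative term $\varepsilon\int_\Omega|\nabla u_\varepsilon|^2$, and recalling the chain rule $|\nabla(u_\varepsilon^{m+1})|=(m+1)u_\varepsilon^m|\nabla u_\varepsilon|$, I obtain
\[
\frac{1}{m+1}\int_\Omega|\nabla(u_\varepsilon^{m+1})|=\int_\Omega u_\varepsilon^m|\nabla u_\varepsilon|\le \int_\Omega u_\varepsilon^m\frac{|\nabla u_\varepsilon|^2}{|\nabla u_\varepsilon|_\varepsilon}+\varepsilon\int_\Omega u_\varepsilon^m\le \int_\Omega f u_\varepsilon+\varepsilon\int_\Omega u_\varepsilon^m.
\]
Since $u_\varepsilon^{m+1}\in W^{1,1}_0(\Omega)$ has zero trace, $\|u_\varepsilon^{m+1}\|_{BV(\Omega)}=\int_\Omega|\nabla(u_\varepsilon^{m+1})|$, so the left-hand side already carries the $BV$ norm; it remains to control the right-hand side in terms of $\|u_\varepsilon^{m+1}\|_{L^{1^*}(\Omega)}$.

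The key elementary observation is that the Hölder conjugate of $\widetilde{m}$ from \eqref{mtilde} is exactly $\widetilde{m}'=(m+1)1^*$, so by Hölder's inequality $\int_\Omega f u_\varepsilon\le \|f\|_{L^{\widetilde{m}}(\Omega)}\|u_\varepsilon\|_{L^{(m+1)1^*}(\Omega)}=\|f\|_{L^{\widetilde{m}}(\Omega)}\|u_\varepsilon^{m+1}\|_{L^{1^*}(\Omega)}^{1/(m+1)}$; likewise, writing $\alpha:=1-\frac{m}{(m+1)1^*}$, Hölder gives $\int_\Omega u_\varepsilon^m\le |\Omega|^{\alpha}\|u_\varepsilon^{m+1}\|_{L^{1^*}(\Omega)}^{m/(m+1)}$. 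Inserting these into the displayed inequality and then using the Sobolev embedding \eqref{e.Lstar} for $u_\varepsilon^{m+1}\in BV(\Omega)$, namely $\|u_\varepsilon^{m+1}\|_{L^{1^*}(\Omega)}\le \mathcal{S}_1\|u_\varepsilon^{m+1}\|_{BV(\Omega)}$, and abbreviating $s:=\|u_\varepsilon\|_{L^{(m+1)1^*}(\Omega)}=\|u_\varepsilon^{m+1}\|_{L^{1^*}(\Omega)}^{1/(m+1)}$ (a finite quantity, since $u_\varepsilon^{m+1}\in H^1_0\cap L^\infty$), one arrives at the scalar inequality
\[
\mathcal{S}_1^{-1}s^{m+1}\le (m+1)\|f\|_{L^{\widetilde{m}}(\Omega)}\,s+(m+1)\varepsilon|\Omega|^{\alpha}s^{m}.
\]

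It then remains to extract $s\le C_\varepsilon$ (which is \eqref{e.equexplt1} after raising to the power $m+1$) from this inequality together with the definition \eqref{ceps} of $C_\varepsilon$. Assuming $s>0$ (otherwise all is trivial) and dividing by $(m+1)s$, the inequality reads $s^m\bigl(\tfrac{\mathcal{S}_1^{-1}}{m+1}-\varepsilon|\Omega|^\alpha s^{-1}\bigr)\le \|f\|_{L^{\widetilde{m}}(\Omega)}$. Set $C_0:=\bigl(\mathcal{S}_1(m+1)\|f\|_{L^{\widetilde{m}}(\Omega)}\bigr)^{1/m}$, so that $\tfrac12 C_0$ is precisely the quantity in the denominator of \eqref{ceps}, and choose $\overline\varepsilon$ small enough that this denominator is positive — indeed bounded below by, say, $\tfrac{\mathcal{S}_1^{-1}}{2(m+1)}$ — for $0<\varepsilon<\overline\varepsilon$; then $C_\varepsilon$ is well defined, bounded on $(0,\overline\varepsilon)$, and $C_\varepsilon\ge C_0$ because its denominator is strictly less than $\tfrac{\mathcal{S}_1^{-1}}{m+1}$. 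Now argue by contradiction: if $s>C_\varepsilon$ then $s>\tfrac12 C_0$, hence $\varepsilon|\Omega|^\alpha s^{-1}<\varepsilon|\Omega|^\alpha/(\tfrac12 C_0)$, the bracket above is positive, and therefore $s^m\bigl(\tfrac{\mathcal{S}_1^{-1}}{m+1}-\varepsilon|\Omega|^\alpha/(\tfrac12 C_0)\bigr)<\|f\|_{L^{\widetilde{m}}(\Omega)}$, which by \eqref{ceps} means $s<C_\varepsilon$, a contradiction. Hence $s\le C_\varepsilon$; feeding this back into the chain of the first paragraph (after the two Hölder estimates) yields exactly \eqref{stime approx BV}, while the uniform $BV$ bound follows from the boundedness of $C_\varepsilon$ on $(0,\overline\varepsilon)$. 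The genuinely delicate point is this last, purely algebraic, step: one must bookkeep the constants so that the threshold $\tfrac12 C_0$ used to bound $s^{-1}$ matches the denominator appearing in \eqref{ceps}, and fix $\overline\varepsilon$ both to keep that denominator positive and to keep $C_\varepsilon$ (hence the final bound) uniformly bounded; the remaining ingredients — the chain rule for $u_\varepsilon^{m+1}$, Hölder's inequality, \eqref{e.Lstar} and \eqref{dis norm} — are routine.
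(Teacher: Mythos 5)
Your proof is correct and follows essentially the same route as the paper's: test \eqref{e.weaksol} with $u_\varepsilon$, use \eqref{dis norm} and the chain rule to reach \eqref{e.equ1}, apply H\"older and the Sobolev embedding \eqref{e.Lstar} to get the scalar inequality \eqref{equ lemma 42}, and then absorb. The only (cosmetic) difference is the final algebraic step: the paper splits into the cases $\|u_\varepsilon\|_{L^{(m+1)1^*}}\le C_0$ versus $\ge C_0$ along a subsequence, whereas you run a direct contradiction argument against the threshold $\tfrac12 C_0$ built into \eqref{ceps}; both yield \eqref{e.equexplt1}, and \eqref{stime approx BV} then follows identically by re-inserting the bound into \eqref{e.equ1}.
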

	\begin{proof}
		Let us fix $\varphi= u_{\varepsilon}$ in \eqref{e.weaksol} obtaining
		$$
		\int_{\Omega}u_{\varepsilon}^{m}\frac{|\nabla u_{\varepsilon}|^2}{|\nabla u_{\varepsilon}|_\varepsilon}+\varepsilon\int_{\Omega}|\nabla u_{\varepsilon}|^2=\int_{\Omega}fu_{\varepsilon},
		$$
		from which, thanks \eqref{dis norm} and getting rid of the second nonnegative term, one yields to \bk
		$$
		\int_{\Omega}u_{\varepsilon}^{m}|\nabla u_{\varepsilon}|\leq\varepsilon\int_{\Omega}u_{\varepsilon}^{m}+\int_{\Omega}fu_{\varepsilon}.
		$$
		Since $u_{\varepsilon}\in H^1_0(\Omega)\cap L^{\infty}(\Omega)$ and since $s\mapsto s^{m+1}$ is locally Lipschitz, one has
		\begin{equation}\label{e.equ1}
			\frac{1}{m+1}\int_{\Omega}|\nabla u_{\varepsilon}^{m+1}|\leq\varepsilon\int_{\Omega}u_{\varepsilon}^{m}+\int_{\Omega}fu_{\varepsilon}.
		\end{equation}		
		Now we apply the Sobolev inequality on the left-hand of \eqref{e.equ1} and the H\"older inequality on the right-hand of \eqref{e.equ1}, yielding to 
		\begin{equation}\label{equ lemma 42}
			\frac{\mathcal{S}_{1}^{-1}}{m+1}\|u_{\varepsilon}\|^{m+1}_{L^{(m+1)1^{*}}(\Omega)}\leq\varepsilon|\Omega|^{1-\frac{m}{(m+1)1^{*}}}\|u_{\varepsilon}\|^{m}_{L^{(m+1)1^{*}}(\Omega)}+\|f\|_{L^{\widetilde{m}}(\Omega)}\|u_{\varepsilon}\|_{L^{(m+1)1^{*}}(\Omega)},
		\end{equation}
		where $\mathcal{S}_{1}$ is given by \eqref{e.Lstar} and $\widetilde{m}=\frac{N(m+1)}{Nm+1}$ (recall that  $\widetilde{m}< N$, so that $\|f\|_{L^{\widetilde{m}}(\Omega)}$ is finite). 
		
		\medskip 
		
		Now,   if 
		$$
		\|u_{\varepsilon}\|_{L^{(m+1)1^{*}}(\Omega)}\le(\mathcal{S}_{1}(m+1)\|f\|_{L^{\widetilde{m}}(\Omega)})^{\frac{1}{m}},
		$$
		definitively in $\varepsilon$ then the proof is concluded. Otherwise there exists a subsequence, which we still call $u_\varepsilon$, such that
		$$
		\|u_{\varepsilon}\|_{L^{(m+1)1^{*}}(\Omega)}\ge(\mathcal{S}_{1}(m+1)\|f\|_{L^{\widetilde{m}}(\Omega)})^{\frac{1}{m}},
		$$
		once again definitively in $\varepsilon$. Then, as $u_\varepsilon$ is not null, one has from \eqref{equ lemma 42} that it holds 
		\begin{equation*}\label{equ lemma 42bis}
			\frac{\mathcal{S}_{1}^{-1}}{m+1}\|u_{\varepsilon}\|^{m+1}_{L^{(m+1)1^{*}}(\Omega)}\leq \frac{\varepsilon|\Omega|^{1-\frac{m}{(m+1)1^{*}}}}{(\mathcal{S}_{1}(m+1)\|f\|_{L^{\widetilde{m}}(\Omega)})^{\frac{1}{m}}}\|u_{\varepsilon}\|^{m+1}_{L^{(m+1)1^{*}}(\Omega)}+\|f\|_{L^{\widetilde{m}}(\Omega)}\|u_{\varepsilon}\|_{L^{(m+1)1^{*}}(\Omega)},
		\end{equation*}
		from which one deduces the existence of $\overline{\varepsilon}$ such that
		$$
		\|u_{\varepsilon}\|_{L^{(m+1)1^{*}}(\Omega)}\leq \left(\frac{\|f\|_{L^{\widetilde{m}}(\Omega)}}{\frac{\mathcal{S}_{1}^{-1}}{m+1}-\frac{\varepsilon|\Omega|^{1-\frac{m}{(m+1)1^{*}}}}{\frac{1}{2}(\mathcal{S}_{1}(m+1)\|f\|_{L^{\widetilde{m}}(\Omega)})^{\frac{1}{m}}}}\right)^{\frac{1}{m}}\text{ for all }0<\varepsilon<\overline{\varepsilon},
		$$
		which is \eqref{e.equexplt1}. \bk
		Furthermore, the previous estimate, \eqref{e.equ1} and H\"older's inequality imply that $u_{\varepsilon}^{m+1}$ is  uniformly bounded in $BV(\Omega)$ with
		$$
		\|u_{\varepsilon}^{m+1}\|_{BV(\Omega)}\leq (m+1)\left(\varepsilon|\Omega|^{1-\frac{m}{(m+1)1^{*}}}C_{\varepsilon}^{m}+\|f\|_{L^{\widetilde{m}}(\Omega)}C_{\varepsilon}\right)\text{ for all }0<\varepsilon<\overline{\varepsilon}.
		$$
		This proves \eqref{stime approx BV} and it concludes the proof.   
	\end{proof}	
	
	\bk Next corollary gives the existence of a limit function $u$, to which $u_\varepsilon$ converges almost everywhere in $\Omega$.
	
	\begin{corollary}\label{cor_ex}
		Under the assumptions of Lemma \ref{l.stimeLq}, there exists a nonnegative $u \in TBV(\Omega)$ such that $u_\varepsilon^{m+1}$ converges to $u^{m+1}$ (up to subsequence) in $L^q(\Omega)$ for every $q < \frac{N}{N-1}$ and $Du_\varepsilon^{m+1}$ converges to $Du^{m+1}$ *-weakly as measures $\varepsilon$ tends to zero.
		Furthermore it holds
		\begin{equation}\label{stime BV}
			\|u^{m+1}\|_{BV(\Omega)}\leq\mathcal{S}_{1}^{\frac{1}{m}}((m+1)\|f\|_{L^{\widetilde{m}}(\Omega)})^{\frac{m+1}{m}},
		\end{equation}
		and
		\begin{equation}\label{stime star}
			\|u^{m+1}\|_{L^{1^{*}}(\Omega)}\leq\left(\mathcal{S}_{1}(m+1)\|f\|_{L^{\widetilde{m}}(\Omega)}\right)^{\frac{m+1}{m}}.
		\end{equation}
		In particular, if $f\equiv0$ then $u\equiv0$.
	\end{corollary}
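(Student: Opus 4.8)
The plan is to produce the limit function from the uniform $BV$-bound of Lemma~\ref{l.stimeLq} via the compact embedding $BV(\Omega)\hookrightarrow L^q(\Omega)$ for $q<1^*$, and then to transfer this information back to $u$ through the change of unknown $v=u^{m+1}$.

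First I would invoke Lemma~\ref{l.stimeLq}: for $0<\varepsilon<\overline\varepsilon$ the family $u_\varepsilon^{m+1}$ is bounded in $BV(\Omega)$ (recall $u_\varepsilon^{m+1}\in H^1_0(\Omega)$, as already used in the proof of that lemma, so its boundary trace vanishes and $\|u_\varepsilon^{m+1}\|_{BV(\Omega)}=\int_\Omega|\nabla u_\varepsilon^{m+1}|$). By compactness of $BV(\Omega)\hookrightarrow L^1(\Omega)$ there exist a subsequence and a nonnegative $v\in BV(\Omega)$ with $u_\varepsilon^{m+1}\to v$ in $L^1(\Omega)$ and a.e.\ in $\Omega$; since by \eqref{e.equexplt1} the family is also bounded in $L^{1^*}(\Omega)$, interpolation between $L^1(\Omega)$ and $L^{1^*}(\Omega)$ upgrades this to convergence in $L^q(\Omega)$ for every $q<1^*=\frac{N}{N-1}$. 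Moreover $Du_\varepsilon^{m+1}$ is bounded in $\mathcal M(\Omega)^N$, and testing against $\phi\in C^1_c(\Omega;\mathbb{R}^N)$ together with the $L^1$-convergence forces every $*$-weak limit point to equal $Dv$, so $Du_\varepsilon^{m+1}\to Dv$ $*$-weakly as measures. I would then fix such a common subsequence once and for all, without relabelling.

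Next I would set $u:=v^{\frac1{m+1}}\ge0$. Since $v\in BV(\Omega)\hookrightarrow L^{1^*}(\Omega)$ and $\Omega$ is bounded, $u\in L^{(m+1)1^*}(\Omega)\subset L^1(\Omega)$; by continuity of $t\mapsto t^{1/(m+1)}$ on $[0,\infty)$ the a.e.\ convergence of $u_\varepsilon^{m+1}$ gives $u_\varepsilon\to u$ a.e.\ in $\Omega$, and $u^{m+1}=v\in BV(\Omega)$. To check $u\in TBV(\Omega)$ I would use the characterisation of $TBV(\Omega)$ recalled in Section~\ref{s.prelim}: since $u\ge0$, one has $T_{-b}^{-a}(u)\equiv-a$ for all $0<a<b\le\infty$, so only $T_a^b(u)\in BV(\Omega)$ has to be shown. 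Writing $T_a^b(u)=\Phi(u^{m+1})$ with $\Phi(t):=T_a^b(t^{1/(m+1)})$ for $t\ge0$ and $\Phi(t):=a$ for $t<0$, the point is that $\Phi$ is globally Lipschitz: it is constant on $(-\infty,a^{m+1}]$ (and on $[b^{m+1},\infty)$ when $b<\infty$), while on $[a^{m+1},b^{m+1}]$ it equals $t^{1/(m+1)}$, whose derivative there is bounded by $\tfrac1{m+1}a^{-m}$. Then the chain rule Theorem~\ref{t chain rule} gives $T_a^b(u)=\Phi(u^{m+1})\in BV(\Omega)$, hence $u\in TBV(\Omega)$.

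Finally I would pass to the limit in the estimates of Lemma~\ref{l.stimeLq}. From the explicit form \eqref{ceps} we have $C_\varepsilon\to C_0:=\big(\mathcal S_1(m+1)\|f\|_{L^{\widetilde{m}}(\Omega)}\big)^{1/m}$ and $\varepsilon|\Omega|^{1-\frac{m}{(m+1)1^*}}C_\varepsilon^m\to0$ as $\varepsilon\to0$, so \eqref{stime approx BV} gives $\limsup_{\varepsilon\to0}\|u_\varepsilon^{m+1}\|_{BV(\Omega)}\le(m+1)\|f\|_{L^{\widetilde{m}}(\Omega)}C_0=\mathcal S_1^{1/m}\big((m+1)\|f\|_{L^{\widetilde{m}}(\Omega)}\big)^{\frac{m+1}{m}}$; combining this with the lower semicontinuity \eqref{sci BV} for $\varphi\equiv1$ (legitimate since $u_\varepsilon^{m+1}\to u^{m+1}$ in $L^1(\Omega)$) proves \eqref{stime BV}. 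Estimate \eqref{stime star} then follows from \eqref{stime BV} together with the Sobolev embedding \eqref{e.Lstar} (alternatively, from \eqref{e.equexplt1} and Fatou's lemma), and when $f\equiv0$ the right-hand side of \eqref{stime BV} vanishes, whence $\|u^{m+1}\|_{L^{1^*}(\Omega)}\le\mathcal S_1\|u^{m+1}\|_{BV(\Omega)}=0$, i.e.\ $u\equiv0$. I expect the only genuinely delicate point to be the membership $u\in TBV(\Omega)$, precisely the verification that $\Phi$ remains Lipschitz across $t=a^{m+1}$ so that the $BV$ chain rule applies; everything else is routine compactness and semicontinuity.
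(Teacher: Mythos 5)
Your proposal is correct and follows essentially the same route as the paper: extract $v\in BV(\Omega)$ as the $L^1$/a.e.\ limit of $u_\varepsilon^{m+1}$ via the compact embedding, set $u=v^{1/(m+1)}$, get $u\in TBV(\Omega)$ from the chain rule applied to a Lipschitz truncation of $t\mapsto t^{1/(m+1)}$, and pass to the limit in \eqref{stime approx BV} and \eqref{e.equexplt1} by lower semicontinuity. Your more explicit verification that $\Phi(t)=T_a^b(t^{1/(m+1)})$ is globally Lipschitz is just a spelled-out version of the paper's remark that $s\mapsto s^{1/(m+1)}$ is smooth away from the origin.
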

	\begin{proof}
		\medskip
		
		By appealing to  Lemma \ref{l.stimeLq},   the compactness of the embedding $BV(\Omega)\hookrightarrow L^{r}(\Omega)$ with $1\leq r<1^{*}$ implies that there exists $v \in BV(\Omega)$ such that
		\begin{equation}\label{e.convBV}
			Du_\varepsilon^{m+1}\rightharpoonup D v \ \text{ *-weakly in }\mathcal{M}(\Omega),
		\end{equation}
		\begin{equation}\label{e.convLqm}
			u_{\varepsilon}^{m+1}\to v \text{ strongly in }L^{r}(\Omega), \ \ 1\leq r<1^{*},
		\end{equation}
		and
		\begin{equation}\label{e.quasiuvunquem}
			u_{\varepsilon}^{m+1}\to v \text{ almost everywhere in }\Omega.
		\end{equation}
		From Lemma \ref{l.lemmaapprox}, we know that $u_{\varepsilon}\geq0$ for all $0<\varepsilon<1$, so that $v\geq0$. As a consequence, we may define
		$$\label{def di u}
		u:=v^{\frac{1}{m+1}}.
		$$
		Using \eqref{e.quasiuvunquem}, we get
		$$\label{e.quasiovunqque}
		u_{\varepsilon}\to u\text{ almost everywhere in }\Omega.
		$$
		By \eqref{e.convLqm} and the Lebesgue Theorem, we conclude that 
		$$\label{e.convLq}
		u_{\varepsilon}\to u \quad \text{strongly in $L^q(\Omega)$, for all $1 \le q < 1^*(m+1)$,}
		$$
		and, in particular 
		\begin{equation}\label{e.convmL1}
			u_{\varepsilon}^{m}\to u^{m}  \quad \text{strongly in $L^q(\Omega)$, for all $1 \le q < \frac{1^*(m+1)}{m}$.}
		\end{equation}
		We now observe that $u$ does not necessarily belong to $BV(\Omega)$. Nevertheless, since $s \mapsto s^{\frac{1}{m+1}} \in C^\infty(a, \infty)$, for every $a>0$,  we can apply Theorem \ref{t chain rule} to obtain that $u \in TBV(\Omega)$.
		
		We conclude by showing \eqref{stime BV} and \eqref{stime star}. Expression \eqref{stime approx BV}, \eqref{e.convBV} and \eqref{sci BV} imply that
		\begin{align*}
			\|u^{m+1}\|_{BV(\Omega)}\leq\liminf_{\varepsilon\to0}\|u_{\varepsilon}^{m+1}\|_{BV(\Omega)}&\leq(m+1)\|f\|_{L^{\widetilde{m}}(\Omega)}\lim_{\varepsilon\to0} C_{\varepsilon}\\
			&=(m+1)\|f\|_{L^{\widetilde{m}}(\Omega)}(\mathcal{S}_{1}(m+1)\|f\|_{L^{\widetilde{m}}(\Omega)})^{\frac{1}{m}}.
		\end{align*}
		This proves \eqref{stime BV}. Similarly, using \eqref{e.equexplt1}
		$$
		\|u^{m+1}\|_{L^{1^*}(\Omega)}\leq\liminf_{\varepsilon\to0}\|u_{\varepsilon}^{m+1}\|_{L^{1^*}(\Omega)}\leq\lim_{\varepsilon\to0}C_{\varepsilon}^{m+1}=(\mathcal{S}_{1}(m+1)\|f\|_{L^{\widetilde{m}}(\Omega)})^{\frac{m+1}{m}},
		$$
		which shows \eqref{stime star}. 
	\end{proof}
	
	Now we show that $u$ is also bounded.
	
	\begin{lemma}\label{lemma bound} Under the assumptions of Lemma \ref{l.stimeLq}, let $u$ be the function defined in Corollary \ref{cor_ex}. Then it holds 
		\begin{equation}\label{e.boundexplLinfty}
			\|u\|_{L^{\infty}(\Omega)}\leq(\widetilde{\mathcal{S}}_1\|f\|_{L^{N,\infty}(\Omega)})^{\frac{1}{m}}.	
		\end{equation}
	\end{lemma}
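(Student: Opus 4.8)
The plan is to derive a uniform $L^\infty$ bound on the approximating sequence $u_\varepsilon$ — independent of $\varepsilon$ — and then pass to the limit using the a.e.\ convergence $u_\varepsilon \to u$ established in Corollary \ref{cor_ex}. The natural tool is a Stampacchia-type truncation argument applied to the weak formulation \eqref{e.weaksol}, but carried out in Lorentz spaces in order to exploit the sharp summability $f \in L^{N,\infty}(\Omega)$ and obtain exactly the constant $\widetilde{\mathcal{S}}_1$ appearing in the Lorentz--Sobolev inequality \eqref{des lorentz}.

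First I would fix $k>0$ and test \eqref{e.weaksol} with $\varphi = G_k(u_\varepsilon) = u_\varepsilon - T_k(u_\varepsilon) \in H^1_0(\Omega)$ (recalling $u_\varepsilon \ge 0$ so $G_k(u_\varepsilon) \ge 0$). On the diffusion term one uses \eqref{dis norm} to get, after discarding the nonnegative $\varepsilon$-term,
\begin{equation*}
\int_{\{u_\varepsilon > k\}} u_\varepsilon^m |\nabla u_\varepsilon| \le \varepsilon \int_{\{u_\varepsilon>k\}} u_\varepsilon^m |\nabla G_k(u_\varepsilon)|/|\nabla u_\varepsilon|_\varepsilon \cdot (\text{something}) + \int_\Omega f G_k(u_\varepsilon),
\end{equation*}
so more carefully: the left side gives $\int_\Omega |\nabla (u_\varepsilon^m G_k(u_\varepsilon))|$-type control. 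Actually the cleaner route is: since on $\{u_\varepsilon>k\}$ one has $u_\varepsilon^m \ge k^m$, one obtains $k^m \int_\Omega |\nabla G_k(u_\varepsilon)| \le \varepsilon\,(\text{term}) + \int_\Omega f\, G_k(u_\varepsilon)$. The $\varepsilon$-perturbation term is $\varepsilon \int_{\{u_\varepsilon>k\}} (\nabla u_\varepsilon \cdot \nabla G_k(u_\varepsilon)) = \varepsilon\int_\Omega|\nabla G_k(u_\varepsilon)|^2 \ge 0$, hence it moves to the left and is discarded, leaving
\begin{equation*}
k^m \int_\Omega |\nabla G_k(u_\varepsilon)| \le \int_\Omega f\, G_k(u_\varepsilon).
\end{equation*}
Then apply \eqref{des lorentz} to $G_k(u_\varepsilon) \in W^{1,1}_0(\Omega)$ on the left and the Hölder inequality in Lorentz spaces \eqref{holder lor} with the conjugate pair $(N,\infty)$ and $(1^*,1)$ on the right:
\begin{equation*}
k^m \widetilde{\mathcal{S}}_1^{-1} \|G_k(u_\varepsilon)\|_{L^{1^*,1}(\Omega)} \le \|f\|_{L^{N,\infty}(\Omega)} \|G_k(u_\varepsilon)\|_{L^{1^*,1}(\Omega)}.
\end{equation*}
For $k > (\widetilde{\mathcal{S}}_1\|f\|_{L^{N,\infty}(\Omega)})^{1/m}$ the constant $k^m \widetilde{\mathcal{S}}_1^{-1} - \|f\|_{L^{N,\infty}(\Omega)}$ is strictly positive, forcing $\|G_k(u_\varepsilon)\|_{L^{1^*,1}(\Omega)} = 0$, i.e.\ $u_\varepsilon \le k$ a.e.\ in $\Omega$, uniformly in $\varepsilon$. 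Letting $k \downarrow (\widetilde{\mathcal{S}}_1\|f\|_{L^{N,\infty}(\Omega)})^{1/m}$ yields $\|u_\varepsilon\|_{L^\infty(\Omega)} \le (\widetilde{\mathcal{S}}_1\|f\|_{L^{N,\infty}(\Omega)})^{1/m}$ for all small $\varepsilon$. Finally, by the a.e.\ convergence $u_\varepsilon \to u$ from Corollary \ref{cor_ex}, the same bound passes to $u$, giving \eqref{e.boundexplLinfty}.

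The main subtlety I would watch is the handling of the $\varepsilon$-regularizing term and the justification that one may legitimately discard it with the correct sign — here the choice $\varphi = G_k(u_\varepsilon)$ is exactly right because $\nabla u_\varepsilon \cdot \nabla G_k(u_\varepsilon) = |\nabla G_k(u_\varepsilon)|^2 \ge 0$ on all of $\Omega$. A second point requiring a little care is whether one should truncate $G_k$ further before applying \eqref{des lorentz}, since a priori $G_k(u_\varepsilon)$ is only known to lie in $H^1_0(\Omega)$; but $H^1_0(\Omega) \subset W^{1,1}_0(\Omega)$ on a bounded domain, so \eqref{des lorentz} applies directly. Everything else is the standard Stampacchia iteration, which here collapses to a single step because $f$ sits precisely at the borderline exponent $L^{N,\infty}$, so no genuine iteration or measure-of-superlevel-set estimate is needed — only the sign of the coefficient. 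I do not expect any real obstacle; the one place to be careful is tracking that the constant comes out as $\widetilde{\mathcal{S}}_1$ and not $\mathcal{S}_1$, i.e.\ using the Lorentz--Sobolev inequality \eqref{des lorentz} rather than the plain $BV$-Sobolev embedding \eqref{e.Lstar}.
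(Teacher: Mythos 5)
Your overall strategy (test with $G_k(u_\varepsilon)$, discard the $\varepsilon\int|\nabla G_k(u_\varepsilon)|^2$ term, lower-bound $u_\varepsilon^m$ by $k^m$ on $\{u_\varepsilon>k\}$, then absorb via the Lorentz--Sobolev inequality \eqref{des lorentz} and the Lorentz--H\"older inequality \eqref{holder lor}) is exactly the paper's, but there is a genuine gap in the step where you claim
$k^m\int_\Omega|\nabla G_k(u_\varepsilon)|\le\int_\Omega f\,G_k(u_\varepsilon)$.
After discarding the regularizing term, the left-hand side of the tested equation is
$\int_{\{u_\varepsilon>k\}}u_\varepsilon^m\,|\nabla G_k(u_\varepsilon)|^2/|\nabla G_k(u_\varepsilon)|_\varepsilon$, and since $|\xi|^2/|\xi|_\varepsilon=|\xi|^2/\sqrt{|\xi|^2+\varepsilon^2}<|\xi|$ strictly, this is \emph{not} bounded below by $k^m\int_\Omega|\nabla G_k(u_\varepsilon)|$. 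Inequality \eqref{dis norm} only gives $|\xi|^2/|\xi|_\varepsilon\ge|\xi|-\varepsilon$, so an unavoidable remainder $\varepsilon k^m|A_k|$ appears, with $A_k=\{u_\varepsilon>k\}$, and the absorption argument yields
\begin{equation*}
\Bigl(1-\tfrac{\widetilde{\mathcal{S}}_1\|f\|_{L^{N,\infty}(\Omega)}}{k^m}\Bigr)\int_\Omega|\nabla G_k(u_\varepsilon)|\le\varepsilon|A_k|,
\end{equation*}
whose right-hand side is not zero for fixed $\varepsilon>0$. Consequently your conclusion that $\|u_\varepsilon\|_{L^\infty(\Omega)}\le(\widetilde{\mathcal{S}}_1\|f\|_{L^{N,\infty}(\Omega)})^{1/m}$ \emph{uniformly in $\varepsilon$} is false in general; the paper's own estimate \eqref{e.Linftybound} shows an additive $O(\varepsilon)$ correction. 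The "single-step collapse" of Stampacchia's method is precisely what fails here.

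The gap is repairable, in two ways. The paper keeps the $\varepsilon|A_k|$ term, derives the level-set decay \eqref{e.levelset}, invokes Stampacchia's lemma to get $\|u_\varepsilon\|_{L^\infty(\Omega)}\le k_{0,\tau}+(\mathcal{S}_1\varepsilon/\tau)\,2^N|\Omega|^{1/N}$ with $k_{0,\tau}$ as in \eqref{e.kappazerochoice}, and only then lets $\varepsilon\to0$ and $\tau\to0$. Alternatively, and closer in spirit to your write-up: keep the remainder, observe that for each fixed $k>(\widetilde{\mathcal{S}}_1\|f\|_{L^{N,\infty}(\Omega)})^{1/m}$ the displayed inequality forces $\|G_k(u_\varepsilon)\|_{L^{1^*,1}(\Omega)}\le C_k\,\varepsilon|\Omega|\to0$ as $\varepsilon\to0$, and then use the a.e.\ convergence $u_\varepsilon\to u$ together with Fatou's lemma to conclude $G_k(u)=0$ a.e., whence $u\le k$ and finally \eqref{e.boundexplLinfty} by letting $k$ decrease to the threshold. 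Either repair works, but as written your argument asserts a false uniform bound on the approximating sequence.
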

	\begin{proof}
		
		Let $k>0$ and take $\varphi=G_{k}(u_{\varepsilon})$ in \eqref{e.weaksol}, where $G_k$ is the function defined in \eqref{tronc 2}.  We get
		$$
		\int_{\Omega}u_{\varepsilon}^{m}\frac{|\nabla G_{k}(u_{\varepsilon})|^2}{|\nabla G_{k}(u_{\varepsilon})|_{\varepsilon}}\leq\int_{\Omega}fG_{k}(u_{\varepsilon}).
		$$
		From \eqref{dis norm} and from the definition of $G_{k}$, we obtain
		$$
		\int_{\Omega}|\nabla G_{k}(u_{\varepsilon})|\leq\varepsilon|A_{k}|+\frac{1}{k^{m}}\int_{\Omega}fG_{k}(u_{\varepsilon}) \stackrel{\eqref{holder lor}}{\le} \varepsilon|A_{k}|+\frac{1}{k^{m}}\|f\|_{L^{N,\infty}(\Omega)}\|G_{k}(u_{\varepsilon})\|_{L^{1^*,1}(\Omega)},
		$$
		where $A_{k}=\{u_{\varepsilon}>k\}$. Using \eqref{des lorentz}, for $k$ large enough, we get 
		$$
		\int_{\Omega}|\nabla G_{k}(u_{\varepsilon})|\leq\frac{\varepsilon|A_{k}|}{1-\frac{\widetilde{\mathcal{S}}_1 \|f\|_{L^{N,\infty}(\Omega)}}{k^{m}}}.
		$$
		Thanks to \eqref{e.Lstar}, we obtain
		$$
		\|G_{k}(u_{\varepsilon})\|_{L^{1^{*}}(\Omega)}\leq\frac{\varepsilon|A_{k}|}{\mathcal{S}_{1}^{-1}-\frac{\widetilde{\mathcal{S}}_1\mathcal{S}_{1}^{-1}\|f\|_{L^{N,\infty}(\Omega)}}{k^{m}}}.
		$$
		Now, for fixed  $0<\tau<1$, let $ k> k_{0,\tau}>0$ with $k_{0,\tau}$ given by  
		\begin{equation}\label{e.kappazerochoice}
			1-\frac{\widetilde{\mathcal{S}}_1 \|f\|_{L^{N,\infty}(\Omega)}}{k_{0,\tau}^{m}}=\tau.
		\end{equation}
		Thus 
		$$
		\int_{\Omega} |G_{k}(u_{\varepsilon})|^{1^{*}}\leq\left(\frac{ \mathcal{S}_{1}\varepsilon}{\tau}\right)^{1^{*}}|A_{k}|^{1^{*}}\text{ for all }k>k_{0,\tau}.
		$$
		On the other hand, we know that $A_{h}\subset A_{k}$ and that $G_{k}(u_{\varepsilon})\geq h-k$ in $A_{h}$ for each $h>k>k_{0,\tau}$. Consequently,
		\begin{equation}\label{e.levelset}
			|A_{h}|\leq\frac{1}{(h-k)^{1^{*}}}\left(\frac{ \mathcal{S}_{1}\varepsilon}{\tau}\right)^{1^{*}}|A_{k}|^{1^{*}},\text{  for all }h>k>k_{0,\tau}.
		\end{equation}
		As we are interested in the explicit $L^{\infty}$ bound we recall 	the classical Stampacchia's argument that runs as  follows: let $\psi_{k_{0}}(s):=|\{u_{\varepsilon}-k_{0,\tau}>s\}|$. Inequality \eqref{e.levelset} then becomes
		$$
		\psi_{k_{0,\tau}}(t)\leq\frac{1}{(t-s)^{1^{*}}}\left(\frac{ \mathcal{S}_{1}\varepsilon}{\tau}\right)^{1^{*}}\psi_{k_{0,\tau}}(s)^{1^{*}},\text{  for all }t>s>0.
		$$	
		From \cite[Lemma 4.1]{S}  
		$$
		|\psi_{k_{0,\tau}}(d)|=0,\ \ \text{ with } d=\left(\frac{ \mathcal{S}_{1}\varepsilon}{\tau}\right)|A_{k_{0,\tau}}|^{\frac{1}{N}}2^{N}.
		$$
		Consequently,
		\begin{equation}\label{e.Linftybound}
			\|u_{\varepsilon}\|_{L^{\infty}(\Omega)}\leq k_{0,\tau}+\left(\frac{ \mathcal{S}_{1}\varepsilon}{\tau}\right) 2^N|\Omega|^{\frac{1}{N}}\text{ for all }0<\varepsilon, \tau<1.
		\end{equation}
		Letting $\varepsilon\to0$ in \eqref{e.Linftybound}, we get 
		\begin{equation}\label{e.proofbounddinfty}
			\|u\|_{L^{\infty}(\Omega)}\leq k_{0,\tau}=\left(\frac{\widetilde{\mathcal{S}}_1\|f\|_{L^{N,\infty}(\Omega)}}{1-\tau}\right)^{\frac{1}{m}}\text{ for all }0< \tau<1.
		\end{equation}
		Then \eqref{e.boundexplLinfty}   follows by letting $\tau\to0$ in \eqref{e.proofbounddinfty}.	
		
	\end{proof}
	\begin{remark}
		The fact that $m>0$ is crucial in the proof of Lemma \ref{l.stimeLq}; if $m=0$, then  a condition on the size of $\|f\|_{L^{N,\infty}(\Omega)}$ would appear as expected, (to be compared with \eqref{e.kappazerochoice}).
		Let us highlight that as $m \to 0$ and $\widetilde{\mathcal{S}}_1 \|f\|_{L^{N,\infty}(\Omega)}<1$ from \eqref{e.boundexplLinfty} we obtain the classical result, i.e. $u \equiv 0$, as prove in \cite{CT}.

	\end{remark}

	\section{Proof of Theorem \ref{teo f N}}\label{s.proof}
	This section is devoted to show that $u$, which has been identified in Corollary \ref{cor_ex}, is a solution to \eqref{prob} in the sense of Definition \ref{def sol}. We proceed step by step by splitting the proof of Theorem \ref{teo f N} into five lemmas. First we show the existence of the vector field   $z$ satisfying \eqref{sol 1}.   
	\begin{lemma}\label{l.weaksol}
		Assume $m>0$ and let $0\le f \in L^{N,\infty}(\Omega)$. Let $u$ be the function identified in Corollary \ref{cor_ex}. Then there exists a vector field $w \in L^\infty(\Omega)^N$ with $\|w\|_{L^\infty(\Omega)^N} \leq 1$ such that  $z:=u^m w\in \DM(\Omega)$ satisfies 
		\begin{equation}\label{lem_sol 1}
			-\operatorname{div}z=f \quad \text{as measures in $\Omega$}. 
		\end{equation}
		Furthermore $u \not \equiv 0$ if $f \not \equiv 0$.
	\end{lemma}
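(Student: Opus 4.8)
The plan is to extract the vector field $z$ as the weak-$*$ limit in $L^\infty(\Omega)^N$ of the approximating fields. For each $\varepsilon>0$ set
$$
z_\varepsilon := |u_\varepsilon|^m\frac{\nabla u_\varepsilon}{|\nabla u_\varepsilon|_\varepsilon} + \varepsilon\nabla u_\varepsilon,
\qquad
w_\varepsilon := \frac{\nabla u_\varepsilon}{|\nabla u_\varepsilon|_\varepsilon}.
$$
Since $|w_\varepsilon|\le 1$ pointwise and $u_\varepsilon$ is bounded in $L^\infty(\Omega)$ uniformly in $\varepsilon$ (Lemma \ref{lemma bound} and estimate \eqref{e.Linftybound}), the field $u_\varepsilon^m w_\varepsilon$ is bounded in $L^\infty(\Omega)^N$; I would also note that $\varepsilon\nabla u_\varepsilon\to 0$ strongly in $L^2(\Omega)^N$ since testing \eqref{e.weaksol} with $u_\varepsilon$ gives $\varepsilon\int_\Omega|\nabla u_\varepsilon|^2\le\int_\Omega f u_\varepsilon\le C$, whence $\|\varepsilon\nabla u_\varepsilon\|_{L^2}\le C\varepsilon^{1/2}\to0$. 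Thus, up to a subsequence, $u_\varepsilon^m w_\varepsilon\rightharpoonup z$ weakly-$*$ in $L^\infty(\Omega)^N$ and $z_\varepsilon\rightharpoonup z$ weakly-$*$ in $L^\infty(\Omega)^N$ as well (and weakly in every $L^p$). Passing to the limit in \eqref{e.weaksol} then immediately yields $-\operatorname{div}z=f$ in $\mathcal{D}'(\Omega)$, and since $f\in L^{N,\infty}(\Omega)\subset L^1(\Omega)$ this gives $z\in\DM(\Omega)$ with $-\operatorname{div}z=f$ as measures, which is \eqref{lem_sol 1}. The bound $\|z\|_{L^\infty(\Omega)^N}\le \|u\|_{L^\infty(\Omega)}^m$ follows from weak-$*$ lower semicontinuity of the norm, after also passing to the limit $u_\varepsilon^m\to u^m$ strongly (from \eqref{e.convmL1}).

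The genuinely substantive point is to identify the limit $z$ in the factored form $z=u^m w$ with $\|w\|_{L^\infty(\Omega)^N}\le 1$, rather than merely as an abstract weak-$*$ limit. The idea is that on the set $\{u>0\}$ one recovers $w$ by dividing: since $u_\varepsilon^m\to u^m$ strongly in $L^q$ for $q$ in the range of \eqref{e.convmL1} and $u_\varepsilon^m w_\varepsilon\rightharpoonup z$, on any set $\{u>\delta\}$ one has $u^m$ bounded below by $\delta^m>0$, so $w_\varepsilon=u_\varepsilon^{-m}(u_\varepsilon^m w_\varepsilon)$; here I would argue that $u_\varepsilon^{-m}\to u^{-m}$ strongly on $\{u>\delta\}$ (dominated convergence, since $u_\varepsilon\to u$ a.e.\ and $u_\varepsilon$ is bounded away from $0$ on that set for $\varepsilon$ small, up to a further subsequence and a small shrinking of the set) so that $w_\varepsilon\rightharpoonup u^{-m}z=:w$ on $\{u>\delta\}$. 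Letting $\delta\to0$ defines $w$ on $\{u>0\}$ with $z=u^m w$ there, and on $\{u=0\}$ one has $|z|\le\|u_\varepsilon^m\|_\infty$-type control forcing $z=0$ a.e.\ (indeed $|z|\le\liminf|u_\varepsilon^m w_\varepsilon|$ controlled by $u^m$ in an averaged sense), so we may set $w:=0$ there. The bound $\|w\|_{L^\infty(\Omega)^N}\le1$ passes to the limit from $|w_\varepsilon|\le1$ by weak-$*$ lower semicontinuity on each $\{u>\delta\}$.

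The main obstacle is precisely this factorization step, because weak-$*$ convergence does not commute with the nonlinear operation of dividing by $u^m$ near the set $\{u=0\}$; the degeneracy of the coefficient at $u=0$ is exactly where care is needed. The resolution uses the strong (not merely weak) convergence $u_\varepsilon^m\to u^m$ together with the fact that division is continuous only where the denominator is bounded below, so the argument must be localized to $\{u>\delta\}$ and then the sets exhausted as $\delta\downarrow 0$, with a separate direct estimate showing $z$ vanishes on $\{u=0\}$. Finally, for the non-triviality statement: if $u\equiv0$ then $z=u^m w\equiv0$, so $f=-\operatorname{div}z\equiv0$; contrapositively, $f\not\equiv0$ forces $u\not\equiv0$, which is what is claimed.
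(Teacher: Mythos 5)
Your proposal is correct in substance but takes a genuinely different, and more laborious, route than the paper on the one point you identify as "the genuinely substantive point". The paper never divides by $u^m$: it extracts the weak-$*$ limit of the \emph{unit} fields $w_\varepsilon=\nabla u_\varepsilon/|\nabla u_\varepsilon|_\varepsilon$ themselves (bounded by $1$ in $L^\infty(\Omega)^N$, hence $w_\varepsilon\rightharpoonup w$ weakly-$*$ with $\|w\|_{L^\infty(\Omega)^N}\le1$ for free), \emph{defines} $z:=u^m w$, and then gets the convergence $\int_\Omega u_\varepsilon^m w_\varepsilon\cdot\nabla\varphi\to\int_\Omega u^m w\cdot\nabla\varphi$ as a product of a strongly $L^1$-convergent sequence ($u_\varepsilon^m\to u^m$, from Corollary \ref{cor_ex}) with a weakly-$*$ convergent one. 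This sidesteps entirely the degeneracy at $\{u=0\}$, the Egorov-type localization to $\{u>\delta\}$, and the separate argument that $z=0$ a.e.\ on $\{u=0\}$ — all of which your route requires and which you only sketch (your justification that $z$ vanishes on $\{u=0\}$, ``controlled by $u^m$ in an averaged sense'', would need to be made precise, e.g.\ by testing the weak-$*$ convergence of $u_\varepsilon^m w_\varepsilon$ against $\chi_{\{u=0\}}\Psi$ and using $\int_{\{u=0\}}u_\varepsilon^m\to\int_{\{u=0\}}u^m=0$). Your version is fixable and leads to the same $w$ (up to the choice on $\{u=0\}$), but the paper's order of operations — limit of $w_\varepsilon$ first, product second — is the intended shortcut. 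One minor slip: you assert $z_\varepsilon\rightharpoonup z$ weakly-$*$ in $L^\infty(\Omega)^N$, but $z_\varepsilon$ contains the term $\varepsilon\nabla u_\varepsilon$, which is only in $L^2(\Omega)^N$, not $L^\infty(\Omega)^N$; this is harmless since, as you note, that term vanishes strongly in $L^2$ and the limit equation is taken distributionally, but the statement as written is not accurate. The non-triviality argument at the end matches the paper's Remark \ref{rem_null}.
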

	\begin{proof} 	Let $u_{\varepsilon}$ be the solution to \eqref{e.problemapprox}. Let us define
		$$
		w_{\varepsilon}=\frac{\nabla u_\varepsilon}{|\nabla u_{\varepsilon}|_{\varepsilon}},
		$$
		and note that $w_{\varepsilon}$ is uniformly bounded in $L^{\infty}(\Omega)^N$  as
		$\|w_{\varepsilon}\|_{L^{\infty}(\Omega)^N}\leq 1$. Therefore there exists $w \in L^\infty(\Omega)^N$ with $\|w\|_{L^\infty(\Omega)^N} \le 1$ such that $w_\varepsilon$ converges *-weakly to $w$ in $L^\infty(\Omega)^N$ as $\varepsilon\to 0$; in particular it holds 
		\begin{equation}\label{e.weakz}
			\lim_{\varepsilon\to0}\int_{\Omega}w_{\varepsilon}\cdot\Psi=
			\int_{\Omega}w\cdot\Psi\text{ for all }\Psi\in L^{r}(\Omega)^{N},\  r\geq 1.
		\end{equation}
		Now we aim to take $\varepsilon\to 0$ into \eqref{e.weaksol}. On one hand, it holds
		$$
		\lim_{\varepsilon\to0}\int_{\Omega}u_{\varepsilon}^{m}w_{\varepsilon}\cdot\nabla\varphi=\int_{\Omega}u^{m}w\cdot\nabla\varphi\text{ for all }\varphi\in C_{c}^{1}(\Omega),
		$$
		since, from Corollary \ref{cor_ex}, $u^m_\varepsilon \to u^m$ strongly in $L^1(\Omega)$ and since we have just shown that $w_\varepsilon \rightharpoonup w$ *-weakly in $L^\infty(\Omega)^N$. In particular, from now on, we define  $z:=u^m w$. 
		
		On the other hand, by taking $\varphi= u_{\varepsilon}$ in \eqref{e.weaksol}, dropping a positive term,  and by using Young's inequality \eqref{holder lor}, we get
		\begin{equation}\label{passlimit}
			\varepsilon\int_{\Omega}|\nabla u_{\varepsilon}|^2\leq\int_{\Omega}fu_{\varepsilon}\leq C\|f\|_{L^{N,\infty}(\Omega)},\text{ for all $0<\varepsilon<1$},
		\end{equation}
		where $C>0$ does not depend on $\varepsilon$ thanks to \eqref{e.Linftybound}. Consequently, using the H\"older inequality
		
		\begin{equation}\label{e.limzero}
			\begin{aligned}
				\biggl|\varepsilon\int_{\Omega}\nabla u_{\varepsilon}\cdot\nabla\varphi\biggl|&\leq\varepsilon^{1/2}\left(\int_{\Omega}\varepsilon|\nabla u_{\varepsilon}|^{2}\right)^{\frac{1}{2}}\left(\int_{\Omega}|\nabla \varphi|^{2}\right)^{\frac{1}{2}}
				\\ 
				&\stackrel{\eqref{passlimit}}{\le} \varepsilon^{\frac{1}{2}}C^{\frac{1}{2}}\|f\|^{\frac{1}{2}}_{L^{N,\infty}(\Omega)}\left(\int_{\Omega}|\nabla \varphi|^{2}\right)^{\frac{1}{2}},
			\end{aligned}
		\end{equation}
		which converges to $0$ as $\varepsilon \to 0$ for all $\varphi\in C_{c}^{1}(\Omega)$.
		This shows the validity of \eqref{lem_sol 1} which also implies that $u \not \equiv 0$ if $f \not \equiv 0$ (see also Remark \ref{rem_null}). This proves the result. 
	\end{proof}

	\medskip
	
	\subsection{The identification of the vector field $z$} First we prove the following lemma.
	
	\begin{lemma}\label{l.totvar}
		Assume $m>0$ and $0\le f \in L^{N,\infty}(\Omega)$. Let $u$ be  the function defined in Corollary \ref{cor_ex} and let $z$ be the vector field defined in Lemma \ref{l.weaksol}. Then  
		\begin{equation}\label{e.totvar}
			\left(z,DT_{a}^{\infty}(u)\right)\geq \frac{1}{m+1}|DT_{a}^{\infty}(u)^{m+1}| \ \text{as measures for all $a>0.$}	
		\end{equation}
	\end{lemma}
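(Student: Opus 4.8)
The plan is to test the approximating equation \eqref{e.weaksol} against a carefully chosen function, exploit the elementary inequality $w_\varepsilon\cdot\nabla u_\varepsilon=|\nabla u_\varepsilon|^2/|\nabla u_\varepsilon|_\varepsilon\ge|\nabla u_\varepsilon|-\varepsilon$ from \eqref{dis norm}, pass to the limit using the a priori estimates and the lower semicontinuity of the total variation, and finally recognize the limiting expression as the Anzellotti pairing $(z,DT_a^\infty(u))$ via \eqref{def pair div l1}.

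First I would fix $a>0$ and $0\le\varphi\in C_c^1(\Omega)$ and take $\phi=\varphi\,G_a(u_\varepsilon)\in H_0^1(\Omega)$ as test function in \eqref{e.weaksol}, recalling that $G_a(u_\varepsilon)=T_a^\infty(u_\varepsilon)-a\ge0$ and $\nabla G_a(u_\varepsilon)=\chi_{\{u_\varepsilon>a\}}\nabla u_\varepsilon$. Expanding $\nabla\phi=G_a(u_\varepsilon)\nabla\varphi+\varphi\,\chi_{\{u_\varepsilon>a\}}\nabla u_\varepsilon$, dropping the nonnegative term $\varepsilon\int_\Omega\varphi\,\chi_{\{u_\varepsilon>a\}}|\nabla u_\varepsilon|^2$, and using \eqref{dis norm} on the principal part together with $u_\varepsilon^m\chi_{\{u_\varepsilon>a\}}|\nabla u_\varepsilon|=T_a^\infty(u_\varepsilon)^m|\nabla T_a^\infty(u_\varepsilon)|=\frac{1}{m+1}|\nabla T_a^\infty(u_\varepsilon)^{m+1}|$, one is led to
$$\frac{1}{m+1}\int_\Omega\varphi\,\bigl|\nabla T_a^\infty(u_\varepsilon)^{m+1}\bigr|\le\int_\Omega f\varphi\,G_a(u_\varepsilon)-\int_\Omega u_\varepsilon^m w_\varepsilon\cdot\nabla\varphi\,G_a(u_\varepsilon)+\varepsilon R_\varepsilon,$$
where $R_\varepsilon=\int_\Omega\varphi\,\chi_{\{u_\varepsilon>a\}}u_\varepsilon^m-\int_\Omega G_a(u_\varepsilon)\nabla u_\varepsilon\cdot\nabla\varphi$.

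Next I would let $\varepsilon\to0$. For the left-hand side, since $u_\varepsilon\to u$ in $L^1(\Omega)$ and these functions are uniformly bounded (Corollary \ref{cor_ex} and Lemma \ref{lemma bound}), the Lipschitz map $t\mapsto\max(t,a)^{m+1}$ gives $T_a^\infty(u_\varepsilon)^{m+1}\to T_a^\infty(u)^{m+1}$ in $L^1(\Omega)$, so the lower semicontinuity \eqref{sci BV0} yields $\liminf_{\varepsilon\to0}\int_\Omega\varphi\,|\nabla T_a^\infty(u_\varepsilon)^{m+1}|\ge\int_\Omega\varphi\,|DT_a^\infty(u)^{m+1}|$. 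On the right-hand side, $\int_\Omega f\varphi\,G_a(u_\varepsilon)\to\int_\Omega f\varphi\,G_a(u)$ by dominated convergence ($f\varphi\in L^1(\Omega)$, $G_a(u_\varepsilon)$ bounded and convergent a.e.); the term $\int_\Omega u_\varepsilon^m w_\varepsilon\cdot\nabla\varphi\,G_a(u_\varepsilon)$ tends to $\int_\Omega u^m w\cdot\nabla\varphi\,G_a(u)$ since $u_\varepsilon^m G_a(u_\varepsilon)\nabla\varphi\to u^m G_a(u)\nabla\varphi$ strongly in $L^1(\Omega)^N$ while $w_\varepsilon\rightharpoonup w$ weakly-$*$ in $L^\infty(\Omega)^N$; and $\varepsilon R_\varepsilon\to0$ using the uniform $L^\infty$ bound for the first part of $R_\varepsilon$ and the estimate $\varepsilon\int_\Omega|\nabla u_\varepsilon|^2\le C$ from \eqref{passlimit} together with Cauchy--Schwarz for the second part. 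This produces, with $z=u^m w$,
$$\frac{1}{m+1}\int_\Omega\varphi\,\bigl|DT_a^\infty(u)^{m+1}\bigr|\le\int_\Omega f\varphi\,G_a(u)-\int_\Omega z\cdot\nabla\varphi\,G_a(u).$$

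Finally I would identify the right-hand side: since $-\operatorname{div}z=f\in L^1(\Omega)$ by Lemma \ref{l.weaksol} and $G_a(u)=T_a^\infty(u)-a\in BV(\Omega)\cap L^\infty(\Omega)$, formula \eqref{def pair div l1} gives $\int_\Omega f\varphi\,G_a(u)-\int_\Omega z\cdot\nabla\varphi\,G_a(u)=\langle(z,DG_a(u)),\varphi\rangle$, and because $DG_a(u)=DT_a^\infty(u)$ the pairing $(z,DG_a(u))$ coincides with $(z,DT_a^\infty(u))$. Hence $\langle(z,DT_a^\infty(u)),\varphi\rangle\ge\frac{1}{m+1}\int_\Omega\varphi\,|DT_a^\infty(u)^{m+1}|$ for every $0\le\varphi\in C_c^1(\Omega)$, which is exactly \eqref{e.totvar}. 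I expect the main obstacle to be the joint passage to the limit in $\int_\Omega u_\varepsilon^m w_\varepsilon\cdot\nabla\varphi\,G_a(u_\varepsilon)$ (weak-$*$ against strong convergence) combined with ensuring the lower semicontinuity inequality points in the right direction; the control of the vanishing $\varepsilon$-terms and the identification with the pairing are routine given the results already established.
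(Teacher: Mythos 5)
Your proposal is correct and follows essentially the same route as the paper: the paper tests \eqref{e.weaksol} with $T_a^\infty(u_\varepsilon)\varphi$ rather than $\varphi\,G_a(u_\varepsilon)=\varphi(T_a^\infty(u_\varepsilon)-a)$, but since the two test functions differ only by $a\varphi$ and $DG_a(u)=DT_a^\infty(u)$, the expansion, the use of \eqref{dis norm}, the treatment of the vanishing $\varepsilon$-terms, the lower semicontinuity step and the final identification with the Anzellotti pairing are all the same.
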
\bk

	\begin{proof} 
		Let $u_\varepsilon$ be the solutions of \eqref{e.weaksol} and let us take $T_{a}^{\infty}(u_{\varepsilon})\varphi$ as a test function in \eqref{e.weaksol} with $0\le \varphi\in C_{c}^{1}(\Omega)$. This yields to 	$$
		\int_{\Omega}u_{\varepsilon}^{m}w_{\epsilon}\cdot\nabla T_{a}^{\infty}(u_{\varepsilon})\,\varphi+\int_{\Omega}u_{\varepsilon}^{m}w_{\varepsilon}\cdot\nabla \varphi \,T_{a}^{\infty}(u_{\varepsilon})+\varepsilon\int_{\Omega}\nabla u_{\varepsilon}\cdot\nabla (T_{a}^{\infty}(u_{\varepsilon})\varphi)=\int_{\Omega}f T_{a}^{\infty}(u_{\varepsilon})\varphi.
		$$
		Notice that
		$$
		\varepsilon\int_{\Omega}\nabla u_{\varepsilon}\cdot\nabla (T_{a}^{\infty}(u_{\varepsilon})\varphi)=\varepsilon\int_{\Omega}|\nabla u_{\varepsilon}|^2 (T_{a}^{\infty})^{\prime}(u_{\varepsilon})\varphi+\varepsilon\int_{\Omega}\nabla u_{\varepsilon}\cdot\nabla\varphi\,T_{a}^{\infty}(u_{\varepsilon})\geq\varepsilon\int_{\Omega}\nabla u_{\varepsilon}\cdot\nabla\varphi\, T_{a}^{\infty}(u_{\varepsilon}),
		$$
		which gives
		\begin{equation}\label{e.proof2l.totvar}
			\int_{\Omega}u_{\varepsilon}^{m}w_{\varepsilon}\cdot\nabla T_{a}^{\infty}(u_{\varepsilon})\,\varphi+\int_{\Omega}u_{\varepsilon}^{m}w_{\varepsilon}\cdot\nabla \varphi\, T_{a}^{\infty}(u_{\varepsilon})+\varepsilon\int_{\Omega}\nabla u_{\varepsilon}\cdot\nabla \varphi\, T_{a}^{\infty}(u_{\varepsilon})\leq\int_{\Omega}f T_{a}^{\infty}(u_{\varepsilon})\varphi.
		\end{equation}
		Now observe that 
		\begin{equation}\label{e.proof3l.totvar}
			\lim_{\varepsilon\to 0}\varepsilon\int_{\Omega}\nabla u_{\varepsilon}\cdot\nabla \varphi\, T_{a}^{\infty}(u_{\varepsilon})=0,
		\end{equation}
		which can be shown as for \eqref{e.limzero} since  $T_{a}^{\infty}(u_{\varepsilon})$ is bounded in $L^{\infty}(\Omega)$ by Lemma \ref{lemma bound}.
		Next we show that
		\begin{equation}\label{e.proof4l.totvar}
			\lim_{\varepsilon\to 0}\int_{\Omega}u_{\varepsilon}^{m}w_{\varepsilon}\cdot\nabla \varphi\, T_{a}^{\infty}(u_{\varepsilon})=\int_{\Omega}z\cdot\nabla\varphi\, T_{a}^{\infty}(u).
		\end{equation}
		First observe that the Lebesgue Theorem, \eqref{e.quasiuvunquem} and \eqref{e.Linftybound} imply that  
		$u_{\varepsilon}^{m} T_{a}^{\infty}(u_{\varepsilon})\to u^{m} T_{a}^{\infty}(u)$ in $L^{1}(\Omega)$. This,   \eqref{e.weakz},  and the fact that $\|w_{\varepsilon}\|_{L^{\infty}(\Omega)^N}\leq1$ give  
		\begin{align*}
			u_{\varepsilon}^{m}w_{\varepsilon}\cdot\nabla \varphi \, T_{a}^{\infty}(u_{\varepsilon})-z\cdot\nabla\varphi\, T_{a}^{\infty}(u)&=(T_{a}^{\infty}(u_{\varepsilon})u_{\varepsilon}^{m}-T_{a}^{\infty}(u)u^{m})w_{\varepsilon}\cdot\nabla\varphi\\
			&+T_{a}^{\infty}(u)u^{m}(w_{\varepsilon}-w)\cdot\nabla\varphi\to0\text{ in }L^{1}(\Omega).
		\end{align*}
		This proves \eqref{e.proof4l.totvar}. 
		
		\smallskip 
		Furthermore, by computing $\nabla T_{a}^{\infty}(u_{\varepsilon})$ in terms of $\nabla u_{\varepsilon}$ and by using \eqref{dis norm}, we get
		\begin{equation}\label{e.proof5l.totvar}
			\int_{\Omega}u_{\varepsilon}^{m}w_{\varepsilon}\cdot\nabla T_{a}^{\infty}(u_{\varepsilon})\,\varphi\geq\int_{\Omega}u_{\varepsilon}^{m}|\nabla u_{\varepsilon}|(T_{a}^{\infty})'(u_{\varepsilon})\varphi-\varepsilon\int_{\Omega}u_{\varepsilon}^{m}(T_{a}^{\infty})'(u_{\varepsilon})\varphi.	
		\end{equation}
		Substituting \eqref{e.proof3l.totvar}, \eqref{e.proof4l.totvar} and \eqref{e.proof5l.totvar} in \eqref{e.proof2l.totvar}, we obtain
		\begin{equation}\label{e.proof6l.totvar} 
			\small{\limsup_{\varepsilon\to0}\left(\int_{\Omega}u_{\varepsilon}^{m}|\nabla u_{\varepsilon}| (T_{a}^{\infty})'(u_{\varepsilon})\varphi- \varepsilon\int_{\Omega} u_{\varepsilon}^{m} (T_{a}^{\infty})'(u_{\varepsilon})\varphi \right)+\int_{\Omega}z\cdot\nabla\varphi T_{a}^{\infty}(u)\leq\int_{\Omega}fT_{a}^{\infty}(u)\varphi.}
		\end{equation}
		We now estimate the terms in \eqref{e.proof6l.totvar}. From \eqref{e.convmL1}, it is clear that 
		\begin{equation}\label{e.proof7}
			\lim_{\varepsilon\to0}\varepsilon\int_{\Omega}u_{\varepsilon}^{m}(T_{a}^{\infty})'(u_{\varepsilon})\varphi=\lim_{\varepsilon \to 0}\varepsilon\int_{\{u_\varepsilon>a\}} u^m_\varepsilon \varphi=0,
		\end{equation} 
		Also, computing $\nabla T_{a}^{\infty}(u_{\varepsilon})$ yields
		\begin{equation}\label{e.proof8}
			\int_{\Omega}u_{\varepsilon}^{m}|\nabla u_{\varepsilon}|(T_{a}^{\infty})'(u_{\varepsilon})\varphi=\frac{1}{m+1}\int_{\Omega}|\nabla (T_{a}^{\infty}(u_{\varepsilon}))^{m+1}|\varphi.
		\end{equation}
		By substituting \eqref{e.proof7} and \eqref{e.proof8} in \eqref{e.proof6l.totvar}, and by using \eqref{sci BV0}, we then  get
		$$
		\frac{1}{m+1}\int_{\Omega}|D( T_{a}^{\infty}(u)^{m+1})|\varphi+\int_{\Omega}z\cdot\nabla\varphi\, T_{a}^{\infty}(u)\leq\int_{\Omega}fT_{a}^{\infty}(u)\varphi.
		$$
		Then, using \eqref{lem_sol 1}  and \eqref{pairing}, one gains
		$$\frac{1}{m+1}\int_{\Omega}|DT^\infty_a(u)^{m+1}| \varphi \le \int_{\Omega}(z, DT^\infty_a(u)) \varphi,$$
		which concludes the proof. 
	\end{proof}
	As we aim to show that equality holds in \eqref{e.totvar}, we need first  to show  that $u$ does not admit jumps.
	\begin{lemma}\label{u reg}
		Assume $m>0$ and $0\le f \in L^{N,\infty}(\Omega)$. Let $u$ be the function defined in Corollary \ref{cor_ex}. Then $u \in DTBV^+(\Omega) \cap L^\infty(\Omega)$. \bk
	\end{lemma}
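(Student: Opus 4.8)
\medskip

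The plan is as follows. By Corollary \ref{cor_ex} the function $u$ is nonnegative and lies in $TBV(\Omega)$, and by Lemma \ref{lemma bound} it is bounded, so the only property still to be established is $\mathcal{H}^{N-1}(S_u^*)=0$. Since $u\in TBV(\Omega)\cap L^\infty(\Omega)$ is nonnegative, Lemma \ref{salti in TBV} gives $S_u^*=\bigcup_{a>0}S_{T_a^\infty(u)}$, and (the family being monotone in $a$) this reduces to the countable union over $a=\tfrac1n$; hence it suffices to prove $\mathcal{H}^{N-1}\bigl(S_{T_a^\infty(u)}\bigr)=0$ for every $a>0$, equivalently $D^j\bigl(T_a^\infty(u)^{m+1}\bigr)=0$. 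Note that $T_a^\infty(u)\in BV(\Omega)\cap L^\infty(\Omega)$ precisely because $u\in TBV(\Omega)\cap L^\infty(\Omega)$.

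\medskip

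Fix $a>0$ (if $a\ge\|u\|_{L^\infty(\Omega)}$ there is nothing to prove) and set $T:=T_a^\infty(u)$, $v:=T^{m+1}$; recall that $J:=J_T=J_v$ is countably $\mathcal{H}^{N-1}$-rectifiable, orient $\nu_T$ so that $T^+>T^-$ on $J$, and observe that $a\le T^-<T^+\le\|u\|_{L^\infty(\Omega)}$ there, with $T^m$ a bounded $BV$ function having one-sided traces $(T^\pm)^m$. Since $f\in L^{N,\infty}(\Omega)\subset L^1(\Omega)$ ($\Omega$ being bounded), one has $\operatorname{div}z=-f\in L^1(\Omega)$, hence $(\operatorname{div}z)\res J=0$ and Lemma \ref{lem sigma} yields $[z,J]^+=[z,J]^-=:\beta$ $\mathcal{H}^{N-1}$-a.e. on $J$. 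Applying \eqref{= misure} to $Tz\in\DM(\Omega)$, restricting to $J$ (where $T^*(\operatorname{div}z)\res J=0$), and using Lemma \ref{lem sigma} together with Lemma \ref{lem normale}, I would obtain
$$(z,DT)\res J=\operatorname{div}(Tz)\res J=\bigl([Tz,J]^+-[Tz,J]^-\bigr)\mathcal{H}^{N-1}\res J=\beta\,(T^+-T^-)\,\mathcal{H}^{N-1}\res J.$$
On the other hand $|DT^{m+1}|\res J=\bigl((T^+)^{m+1}-(T^-)^{m+1}\bigr)\mathcal{H}^{N-1}\res J$, so restricting the inequality \eqref{e.totvar} of Lemma \ref{l.totvar} to $J$ and comparing densities gives
$$\beta\ \ge\ \frac{(T^+)^{m+1}-(T^-)^{m+1}}{(m+1)(T^+-T^-)}\ =\ \frac{1}{T^+-T^-}\int_{T^-}^{T^+}s^m\,ds\ >\ (T^-)^m\qquad\mathcal{H}^{N-1}\text{-a.e. on }J,$$
where the last strict inequality is exactly where the degeneracy $m>0$ enters, since $s\mapsto s^m$ is strictly increasing on $[a,\infty)$ and $T^+>T^-$ on $J$.

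\medskip

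To close the argument I would bound $\beta$ from above. Since $|z|=|u|^m|w|\le u^m\le T^m$ a.e.\ in $\Omega$ (using $T=\max(u,a)\ge u\ge0$ and $m>0$), and since $T^m$ is a bounded $BV$ function with one-sided trace $(T^-)^m$ on the side $\{(y-x)\cdot\nu_T(x)<0\}$ of $J$, the characterization of the Anzellotti normal trace $[z,J]^-$ as the one-sided $L^1$-average limit of $z\cdot\nu_T$ (see \cite{CF,CDC}, as exploited in \cite{GMP}) yields $|\beta|=\bigl|[z,J]^-\bigr|\le(T^-)^m$ $\mathcal{H}^{N-1}$-a.e.\ on $J$. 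Comparing with the previous display forces $\mathcal{H}^{N-1}(J)=0$; since $a>0$ was arbitrary, $\mathcal{H}^{N-1}(S_u^*)=0$ follows, i.e.\ $u\in DTBV^+(\Omega)\cap L^\infty(\Omega)$. The main obstacle is precisely this last normal-trace estimate: the limit field $w$ carries no $BV$ regularity, so one cannot pass to one-sided traces of $z$ directly and must instead rely on the $L^1$-averaged representation of normal traces of $\DM(\Omega)$-fields with divergence in $L^1$, following \cite{GMP}; the remaining steps are routine manipulations within the pairing calculus recalled in Section \ref{s.prelim}.
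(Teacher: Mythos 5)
Your proposal follows essentially the same route as the paper's proof (which in turn follows \cite[Lemma 5.9]{GMP}): reduce to $\mathcal{H}^{N-1}(S_{T^\infty_a(u)})=0$ via Lemma \ref{salti in TBV}, use $\operatorname{div}z\in L^1(\Omega)$ and Lemma \ref{lem sigma} to get a single-valued normal trace $\beta$ on $J_{T^\infty_a(u)}$, compute $(z,DT^\infty_a(u))\res J=\beta\,(T^+-T^-)\mathcal{H}^{N-1}\res J$, bound it below by $\frac{1}{m+1}|D^jT^\infty_a(u)^{m+1}|$ via Lemma \ref{l.totvar}, and derive a contradiction with an upper bound $|\beta|\le\min\{(T^m)^+,(T^m)^-\}$ using the strict monotonicity of $s\mapsto s^m$. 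All of that matches the paper step for step.

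The one place where your argument is not closed with the tools at hand is exactly the step you flag: the bound $|\beta|\le (T^-)^m$. The ``one-sided $L^1$-average limit'' characterization of $[z,J]^-$ is not available for a general $z\in\DM(\Omega)$ (the normal trace is defined distributionally, and $z\cdot\nu$ need not admit one-sided Lebesgue values on $J$), and the global estimate \eqref{riferz} only gives $|\beta|\le\|z\|_{L^\infty}\le\|u\|_{L^\infty}^m$, which is not enough. The paper closes this gap with a factorization rather than a trace representation: for a.e. $a>0$ one has $w\chi_{\{u>a\}}=u^{-m}z\chi_{\{u>a\}}\in\DM(\Omega)$ (by Lemma \ref{campo prodotto}, since $u^{-m}\chi_{\{u>a\}}\in BV(\Omega)\cap L^\infty(\Omega)$), and then Lemma \ref{lem normale} applied to $z\chi_{\{u>a\}}=T^\infty_a(u)^m\,w\chi_{\{u>a\}}$ gives $[z,\nu]^\pm\chi_{\{u>a\}}=(T^\infty_a(u)^m)^\pm[w\chi_{\{u>a\}},\nu]^\pm$ with $|[w\chi_{\{u>a\}},\nu]^\pm|\le 1$ by \eqref{riferz}; this yields the pointwise bound by $\min\{(T^m)^+,(T^m)^-\}$ on $J$ without any averaged-trace theorem. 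With this substitution (and restricting to a sequence of admissible levels $a_n\downarrow 0$, which suffices by the monotonicity of $a\mapsto S_{T^\infty_a(u)}$), your proof coincides with the paper's.
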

	\begin{proof}
		We follow the argument of \cite[Lemma 5.9]{GMP};  in order to ease the presentation we split the proof in few steps.
		
		\medskip
		{\bf Step 1.} Let us start with some preliminary remarks. First observe that from  Corollary \ref{cor_ex} and Lemma \ref{lemma bound} we have that $u\in TBV(\Omega)\cap L^{\infty}(\Omega)$, and this allows us to deduce that, for fixed $a>0$,  both  $T^\infty_a(u)$  and  $T^\infty_a(u)^{m+1}$ belong to $ BV(\Omega)\cap L^{\infty}(\Omega)$. 

		\smallskip By \cite[Proposition 3.69]{AFP} we know that $$J_{T^\infty_a(u)}=J_{T^\infty_a(u)^{m+1}},$$ and, moreover, $\nu_{T^\infty_a(u)}=\nu_{T^\infty_a(u)^m}$ on $J_{T^\infty_a(u)}$ for almost every $a>0$ since $m>0$.   Roughly speaking  we will show that $\mathcal{H}^{N-1}\left(J_{T^\infty_a(u)^{m+1}}\right)=0$ and so  $\mathcal{H}^{N-1}\left(J_{T^\infty_a(u)}\right)=0$. 
This fact, in view of Lemma \ref{salti in TBV},   will follow once we prove that $\mathcal{H}^{N-1}\left(S_{u}^{*}\right)=0$.

\medskip		
	{\bf Step 2.}	 Let $z$ be the vector field found in Lemma \ref{l.weaksol}. 	
By Lemma \ref{l.weaksol}   equation \eqref{lem_sol 1} holds; this implies in particular that  $\operatorname{div}z \in L^{N,\infty}(\Omega)$.

		Thus, thanks to Lemma \ref{lem sigma}, one gets  \begin{equation}\label{saltino nullo}
			0=\operatorname{div}z\res J_{T^\infty_a(u)}= \left(\left[z, \nu_{T^\infty_a(u)}\right]^+ - \left[z, \nu_{T^\infty_a(u)}\right]^-\right) \mathcal{H}^{N-1}\res J_{T^\infty_a(u)}.
		\end{equation} 
		As a consequence 
		\begin{equation}\label{def psi}
			\left[z, \nu_{T^\infty_a(u)}\right]:=\left[z, \nu_{T^\infty_a(u)}\right]^+ =\left[z, \nu_{T^\infty_a(u)}\right]^- \, \text{$\mathcal{H}^{N-1}$-a.e. on $J_{T^\infty_a(u)}$.}
		\end{equation}
		
		\medskip
		{\bf Step 3.}
		 We claim  that $w\chi_{\{u>a\}}\in \DM(\Omega)$ for almost every $a>0$.  As $z\in \DM(\Omega)$, this follows by Lemma \ref{campo prodotto} as $w\chi_{\{u>a\}}=u^{-m}z\chi_{\{u>a\}},
		$
		and $u^{-m}\chi_{\{u>a\}}\in BV(\Omega)\cap L^{\infty}(\Omega)$, since $u\in TBV(\Omega)\cap L^{\infty}(\Omega)$.

		Through  \eqref{saltando sul bordo}, for almost every $a>0$,   also recalling \eqref{riferz},  we get 
		{\small$$\small \left|\left[z, \nu_{T^\infty_a(u)}\right]\bk \chi_{\{u>a\}} \right|= \left|(T^\infty_a(u)^m)^\pm \left[w \chi_{\{u>a\}}, \nu_{T^\infty_a(u)}\right]^\pm\right| \le \left(T^\infty_a(u)^m\right)^\pm \, \text{$\mathcal{H}^{N-1}$-a.e. on $J_{T^\infty_a(u)} $,}$$}
		which implies that , for almost every $a>0$, 
		\begin{equation}\label{dis per psi}
			\left|\left[z, \nu_{T^\infty_a(u)}\right] \chi_{\{u>a\}}\right| \le \min\left\{\left(T^\infty_a(u)^m\right)^+, \left(T^\infty_a(u)^m\right)^-\right\} \ \ \text{$\mathcal{H}^{N-1}$-a.e. on $J_{T^\infty_a(u)} $.}
		\end{equation}
		
		\medskip
		
		{\bf Step 4.}
		 On    $J_{T^\infty_a(u)}$ one has the following inequality:  
		\begin{equation}\label{salto di s^m+1}
			\begin{aligned}
				\frac{1}{m+1}\left|D^j T^\infty_a(u)^{m+1}\right|   \stackrel{\eqref{e.totvar}}{\le} &\left(z, DT^\infty_a(u)\right) \res J_{T^\infty_a(u)} \\ \stackrel{\eqref{= misure}}{=}  &\left(- T^\infty_a(u)^* \operatorname{div}z+ \operatorname{div}\left(T^\infty_a(u)z\right) \right)\res J_{T^\infty_a(u)} 
				\\
				\stackrel{\eqref{saltino nullo}}{=} &\operatorname{div}\left(T^\infty_a(u)z\right) \res J_{T^\infty_a(u)}.
			\end{aligned}
		\end{equation}

	 \medskip
Therefore, recalling Theorem \ref{t chain rule} and also using \eqref{def psi}, we have
		\begin{align*}
			\frac{1}{m+1}&\left|\left(T^\infty_a(u)^{m+1}\right)^+ - \left(T^\infty_a(u)^{m+1}\right)^-\right| \mathcal{H}^{N-1}\res J_{T^\infty_a(u)} \stackrel{\eqref{salto di s^m+1}}{\le} \operatorname{div}\left(T^\infty_a(u)z\right)   \res J_{T^\infty_a(u)} \\ = &\left(\left[T^\infty_a(u)z, \nu_{T^\infty_a(u)}\right]^+ - \left[T^\infty_a(u)z, \nu_{T^\infty_a(u)}\right]^-\right) \mathcal{H}^{N-1} \res J_{T^\infty_a(u)} \\ \stackrel{\eqref{saltando sul bordo}}{=}& \left(T^\infty_a(u)^+\left[z, \nu_{T^\infty_a(u)}\right]^+ - T^\infty_a(u)^-\left[z, \nu_{T^\infty_a(u)}\right]^-\right) \mathcal{H}^{N-1} \res J_{T^\infty_a(u)} \\ \stackrel{\eqref{def psi}}{=}& \left(T^\infty_a(u)^+ - T^\infty_a(u)^-\right)  \left[z, \nu_{T^\infty_a(u)}\right] \chi_{\{u>a\}}  \mathcal{H}^{N-1} \res J_{T^\infty_a(u)} \\ \stackrel{\eqref{dis per psi}}{\le} & \left|T^\infty_a(u)^+ - T^\infty_a(u)^-\right| \min \left\{\left(T^\infty_a(u)^m\right)^+, \left(T^\infty_a(u)^m\right)^-\right\} \mathcal{H}^{N-1} \res J_{T^\infty_a(u)}.
		\end{align*}
		Hence, as $\psi(s)=s^m$ is strictly monotone, one gets that 
		$$0=\mathcal{H}^{N-1}\left(J_{T^\infty_a(u)}\right)=\mathcal{H}^{N-1}(S_{T^\infty_a(u)}) \quad \text{for almost every $a>0$,}$$
		which implies, due to Lemma \ref{salti in TBV}, that $\mathcal{H}^{N-1}(S^*_u)=0$.
		This concludes the proof.
	\end{proof}

	Now we show that inequality \eqref{e.totvar}  obtained in Lemma \ref{l.totvar} is actually an equality.
	\begin{lemma}\label{c.corototvar}
		Assume $m>0$ and let $0\le f \in L^{N,\infty}(\Omega)$. Let  $u$ be the function defined in Corollary \ref{cor_ex} and let $z$ be the vector field defined in Lemma \ref{l.weaksol}. Then it holds
		\begin{equation}\label{sol 2 blem}
			\left(z, DT^\infty_a(u)\right) = \frac{1}{m+1}|DT^\infty_a(u)^{m+1}| \ \text{as measures in $\Omega$ for a.e. $a>0$.}
		\end{equation}
	\end{lemma}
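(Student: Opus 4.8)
The plan is to upgrade the one-sided inequality of Lemma~\ref{l.totvar} to an equality by establishing the reverse inequality
$$
\left(z, DT_{a}^{\infty}(u)\right) \leq \frac{1}{m+1}\left|DT_{a}^{\infty}(u)^{m+1}\right| \quad \text{as measures for a.e. } a>0,
$$
which, combined with \eqref{e.totvar}, yields \eqref{sol 2 blem}. The point of departure is the fact, recorded in \eqref{ec:2}, that the pairing measure is always controlled by the total variation: $|(z,DT^\infty_a(u))|\ll |DT^\infty_a(u)|$ with Radon--Nikod\'ym density $\theta(z,DT^\infty_a(u),x)$ bounded by $\|z\|_{L^\infty(\Omega)^N}$. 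Since $z=u^m w$ with $\|w\|_\infty\le 1$, on the set $\{u>a\}$ one has $|z|\le \|u\|_\infty^m$, but more precisely I would like the pointwise bound $\theta(z,DT^\infty_a(u),x)\le T^\infty_a(u)(x)^m$ for $|DT^\infty_a(u)|$-a.e.\ $x$; this is exactly what powers the reverse estimate, because then
$$
\left(z,DT^\infty_a(u)\right)=\theta(z,DT^\infty_a(u),x)\,|DT^\infty_a(u)| \le T^\infty_a(u)^m\,|DT^\infty_a(u)| = \frac{1}{m+1}\left|DT^\infty_a(u)^{m+1}\right|,
$$
where the last equality is the chain rule \eqref{chain rule senza j} applied to $\Phi(s)=s^{m+1}$, valid since $u\in DTBV^+(\Omega)$ by Lemma~\ref{u reg} (so $T^\infty_a(u)$ has no jump part and $\widetilde D (T^\infty_a(u))^{m+1}=(m+1)\widetilde u^m\,\widetilde D T^\infty_a(u)$).

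**Obtaining the density bound.** To get $\theta(z,DT^\infty_a(u),x)\le T^\infty_a(u)^m$ I would argue as follows. By Lemma~\ref{Tu z parring} we may replace $z$ by $z\chi_{\{u>a\}}$ in the pairing $(z,DT^\infty_a(u))$ for a.e.\ $a>0$. Then I would write $z\chi_{\{u>a\}} = (T^\infty_a(u))^m \cdot \big(w\chi_{\{u>a\}}\big)$, noting that $w\chi_{\{u>a\}}\in\DM(\Omega)$ for a.e.\ $a$ (Step~3 of the proof of Lemma~\ref{u reg}) and that $(T^\infty_a(u))^m\in BV(\Omega)\cap L^\infty(\Omega)$. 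Applying Lemma~\ref{l.DBV} (which requires the $DBV$ regularity now available) to pull the scalar factor out of the pairing gives
$$
\left(z\chi_{\{u>a\}}, DT^\infty_a(u)\right) = \left((T^\infty_a(u))^m\, w\chi_{\{u>a\}},\, DT^\infty_a(u)\right) = (T^\infty_a(u))^m\,\left(w\chi_{\{u>a\}}, DT^\infty_a(u)\right),
$$
as measures. Since $\|w\chi_{\{u>a\}}\|_{L^\infty}\le 1$, estimate \eqref{ec:2} gives $\big|(w\chi_{\{u>a\}}, DT^\infty_a(u))\big|\le |DT^\infty_a(u)|$, i.e.\ $\big|\theta(w\chi_{\{u>a\}}, DT^\infty_a(u),x)\big|\le 1$. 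Combining, $\theta(z,DT^\infty_a(u),x)\le (T^\infty_a(u)(x))^m$ for $|DT^\infty_a(u)|$-a.e.\ $x$, which is the desired bound; the chain-rule identity for Radon--Nikod\'ym derivatives \eqref{chain rule pairing} ensures this carries over correctly between $DT^\infty_a(u)$ and $D(T^\infty_a(u))^{m+1}$.

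**The main obstacle.** The delicate point is the measure-theoretic bookkeeping around the set $\{u>a\}$: one must check that $w\chi_{\{u>a\}}$ genuinely lies in $\DM(\Omega)$ for a.e.\ $a$ and that the various product and chain rules (Lemmas~\ref{campo prodotto}, \ref{l.DBV}, \ref{Tu z parring}, and \eqref{chain rule pairing}) apply, all of which hinge on $u\in DTBV^+(\Omega)$—hence on having Lemma~\ref{u reg} in hand, and on the absence of jumps so that no boundary-type contributions on $J_{T^\infty_a(u)}$ survive (these were killed precisely in the proof of Lemma~\ref{u reg}). A secondary subtlety is that the identity holds only for a.e.\ $a>0$, consistent with the fact that $\{u>a\}$ has finite perimeter only for a.e.\ $a$ by the coarea formula, and one should make sure the null set of bad $a$'s from the different lemmas can be taken to be a single null set. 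Once the density bound is established, the conclusion $(z,DT^\infty_a(u))=\tfrac{1}{m+1}|DT^\infty_a(u)^{m+1}|$ follows immediately by sandwiching with Lemma~\ref{l.totvar}, and this completes the proof.
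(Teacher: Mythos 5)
Your proposal is correct and follows essentially the same route as the paper: the paper also sandwiches $\left(z,DT^\infty_a(u)\right)$ between $\frac{1}{m+1}|DT^\infty_a(u)^{m+1}|$ on both sides by chaining Lemma \ref{l.totvar}, the localization \eqref{Tu z parring eq}, the factorization \eqref{uscire sx} of $z\chi_{\{u>a\}}=T^\infty_a(u)^m\,w\chi_{\{u>a\}}$, the bound \eqref{ec:2}, and the chain rule \eqref{chain rule senza j}, all resting on $u\in DTBV^+(\Omega)$ from Lemma \ref{u reg}. Your phrasing via the Radon--Nikod\'ym density $\theta$ is just a restatement of the same estimates, so there is nothing substantive to add.
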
\bk
	\begin{proof} The proof strictly follows the one of \cite[Lemma 5.10]{GMP}, but for the sake of completeness, we present the details.

		One has 
		\begin{align}
			\nonumber	\frac{1}{m+1}|DT^\infty_a(u)^{m+1}|  \stackrel{\eqref{e.totvar}}{\le} &\left(z, DT^\infty_a(u)\right) \stackrel{\eqref{Tu z parring eq}}{=} \left(z \chi_{\{u>a\}}, DT^\infty_a(u)\right)  \\   \stackrel{\eqref{uscire sx} }{=} & T^\infty_a(u)^m \left(w \chi_{\{u>a\}}, DT^\infty_a(u)\right)  \label{516}
			\\   \stackrel{\eqref{ec:2}}{\leq }  &T^\infty_a(u)^m|DT^\infty_a(u)| \nonumber\\    \stackrel{\eqref{chain rule senza j}}{=}& \frac{1}{m+1}|DT^\infty_a(u)^{m+1}|\nonumber, 
		\end{align}
		where   in the last equality, we utilized the regularity property of $u$, which belongs to $DTBV^+(\Omega)$. Therefore,  we deduce that    \eqref{sol 2 blem} holds true. This completes the proof. 
	\end{proof}
	\begin{remark} \label{c.corototvarr}
		Observe that, by \eqref{516},  one also obtains $$
		\left(w \chi_{\{u>a\}}, DT^\infty_a(u)\right)=\left|D T^\infty_a(u)\right|  \quad \text{in $\mathcal{D}'(\Omega)$, for a.e. $a>0$,}
		$$ 
		which could be used as an equivalent condition in place of \eqref{sol 2 b} in Definition \ref{def sol}.
	\end{remark}
	\begin{remark}\label{ext ft} We highlight that,   thanks to \cite[Corollary 3.5]{LTS}, we can extend the space of test functions in \eqref{sol 1} to $BV(\Omega)\cap L^\infty(\Omega)$. Hence, using   \eqref{int per parti}, we can recast it as 
		\begin{equation}\label{56}\int_\Omega (z, D\psi)- \int_{\partial \Omega} \psi [z,\nu]=\int_{\Omega}f \psi \quad \text{for all $\psi \in BV(\Omega)\cap L^\infty(\Omega)$.}\end{equation}
	\end{remark}

	\medskip
	\subsection{The boundary condition} Here we show that $u$  satisfies  the boundary datum in the weak  sense given by  (\ref{sol 3b}). The key technical lemma is the following one. It is a step by step  re-adaptation of the proof of formula  (4.15) in \cite[Lemma 4.5]{GMP} that we present for the sake of completeness.  
	\begin{lemma}\label{lemma dis al bordo p}
		Assume $m>0$ and let $0\le f \in L^{N,\infty}(\Omega)$. Let  $u$ be the function defined in Corollary \ref{cor_ex} and let $z$ be the vector field found in Lemma \ref{l.weaksol}. Let  $q>0$, then 
		\begin{equation}\label{dis per il bordo}
			\left|\frac{T^\infty_a(u)^{m(q+1)}}{q+1} - \frac{a^{m(q+1)}}{q+1}\right| \le \left(\frac{a^{mq}}{q} - \frac{T^\infty_a(u)^{mq}}{q}\right) \left[z, \nu\right]\quad \mathcal{H}^{N-1} \text{-a.e  in $\partial\Omega$}, 
		\end{equation}
		for almost every $a>0$.
		In particular $[z,\nu]\leq 0$. 
	\end{lemma}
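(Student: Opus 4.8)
The plan is to mimic the strategy used in Lemma~\ref{l.totvar}, but now testing the approximate equation \eqref{e.weaksol} with a test function that localizes mass near the boundary. Concretely, for fixed $q>0$ and $a>0$ I would choose a test function of the form $\varphi_\varepsilon := \left(a^{mq}/q - T^\infty_a(u_\varepsilon)^{mq}/q\right)\eta$ (or more precisely $\left(a^{mq} - T^\infty_a(u_\varepsilon)^{mq}\right)\eta/q$) with $0\le \eta\in C^1(\overline\Omega)$, which is nonnegative thanks to the truncation at level $a$, vanishes where $u_\varepsilon\le a$, and, crucially, does not vanish on $\partial\Omega$ so that it "sees" the boundary. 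Plugging this into \eqref{e.weaksol}, the gradient of $\varphi_\varepsilon$ produces a term $-m\, T^\infty_a(u_\varepsilon)^{mq-1}\nabla T^\infty_a(u_\varepsilon)\,\eta$ contracted against $u_\varepsilon^m w_\varepsilon$, which on the set $\{u_\varepsilon>a\}$ equals $-m\,u_\varepsilon^{m(q+1)-1}w_\varepsilon\cdot\nabla u_\varepsilon\,\eta$, and by \eqref{dis norm} this is bounded below by $-m\,u_\varepsilon^{m(q+1)-1}|\nabla u_\varepsilon|\eta + \varepsilon(\cdots)$, i.e. up to sign and the vanishing $\varepsilon$-term it is (a constant times) $-|\nabla u_\varepsilon^{m(q+1)}|\eta$.

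The second step is to pass to the limit $\varepsilon\to 0$ in each piece. The $\varepsilon\nabla u_\varepsilon\cdot\nabla\varphi_\varepsilon$ term vanishes exactly as in \eqref{e.limzero}/\eqref{e.proof3l.totvar}, using the $L^\infty$ bound of Lemma~\ref{lemma bound} and the energy bound \eqref{passlimit}. The term $u_\varepsilon^m w_\varepsilon\cdot\nabla\eta\cdot\left(a^{mq}-T^\infty_a(u_\varepsilon)^{mq}\right)/q$ converges to $z\cdot\nabla\eta\left(a^{mq}-T^\infty_a(u)^{mq}\right)/q$ by the strong $L^1$ convergence of $u_\varepsilon^m$ and the $*$-weak convergence $w_\varepsilon\rightharpoonup w$, exactly the argument behind \eqref{e.proof4l.totvar}; the right-hand side $\int f\,\varphi_\varepsilon$ converges by dominated convergence. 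For the leftover "gradient" term, one uses lower semicontinuity \eqref{sci BV0} after recognizing $m\,u_\varepsilon^{m(q+1)-1}|\nabla u_\varepsilon| = \tfrac{m}{m(q+1)}|\nabla u_\varepsilon^{m(q+1)}| = \tfrac{1}{q+1}|\nabla u_\varepsilon^{m(q+1)}|$ on $\{u_\varepsilon>a\}$, so in the limit one controls $\tfrac{1}{q+1}|D T^\infty_a(u)^{m(q+1)}|$ from below, up to the boundary-free part. Collecting everything and using \eqref{lem_sol 1}, \eqref{int per parti L1} (the $L^1$-divergence Green formula) together with the pairing identity \eqref{= misure} — and the fact, from Lemma~\ref{u reg}, that $u\in DTBV^+(\Omega)$ so that $T^\infty_a(u)$ has no jump part and the chain rules \eqref{chain rule senza j}, \eqref{chain rule pairing} apply — the interior total-variation contributions cancel against the pairing measure $(z,DT^\infty_a(u)^{?})$ (here invoking Lemma~\ref{c.corototvar}/Remark~\ref{c.corototvarr}), leaving precisely the boundary integral $\int_{\partial\Omega}\left(a^{mq}/q - T^\infty_a(u)^{mq}/q\right)[z,\nu]\,d\mathcal H^{N-1}$ on one side and $\int_{\partial\Omega}\left|T^\infty_a(u)^{m(q+1)} - a^{m(q+1)}\right|/(q+1)\,d\mathcal H^{N-1}$ (or its signed version) on the other. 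Since $\eta\ge 0$ is arbitrary in $C^1(\overline\Omega)$, one localizes to get the pointwise $\mathcal H^{N-1}$-a.e. inequality \eqref{dis per il bordo}. Finally, letting $a\to 0^+$ (or simply observing the sign of the two factors: $a^{mq}/q - T^\infty_a(u)^{mq}/q \le 0$ since $T^\infty_a(u)\ge a$, while the left-hand side is $\ge 0$) forces $[z,\nu]\le 0$ $\mathcal H^{N-1}$-a.e. on $\partial\Omega$.

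The main obstacle I anticipate is the careful bookkeeping of the algebraic identity that lets the interior terms cancel: one must check that the specific powers chosen ($mq$ in the test function, $m(q+1)$ in the total variation) match so that, after applying the chain rule for the pairing and \eqref{int per parti L1}, the bulk terms $(z, D(\cdot))$ and $\tfrac{1}{m+1}|D(\cdot)^{m+1}|$ genuinely coincide and drop out, leaving a clean boundary identity — this is where Lemma~\ref{c.corototvar} and the $DTBV^+$ regularity of $u$ are essential, and getting the constants and the precise truncated powers right (so that, e.g., $\nabla\bigl(T^\infty_a(u_\varepsilon)^{mq}\bigr)$ interacts correctly with $u_\varepsilon^m\nabla u_\varepsilon$ to reconstruct $\nabla u_\varepsilon^{m(q+1)}$) is the delicate point. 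A secondary technical subtlety is that $u$ itself is not in $BV(\Omega)$ up to $\partial\Omega$, so $[z,\nu]$ must be paired against a genuinely $BV$ (indeed bounded, via truncation) quantity; using $T^\infty_a(u)$ and its powers — which are in $BV(\Omega)\cap L^\infty(\Omega)$ for a.e.\ $a$ — together with Remark~\ref{ext ft} to justify \eqref{56}, circumvents this, but one must restrict to almost every $a$ throughout, consistently with the a.e.-in-$a$ statements already present in Lemma~\ref{c.corototvar}.
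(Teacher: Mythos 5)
Your overall route is the paper's: test \eqref{e.weaksol} with (a constant minus) a power of the truncation, multiplied by a cut-off $\eta\in C^1(\overline\Omega)$ that does not vanish on $\partial\Omega$; pass to the limit dropping the signed $\varepsilon$-terms; cancel the interior contributions through the pairing identity (the analogue of \eqref{felicita}, via \eqref{Tu z parring eq}, \eqref{uscire sx}, \eqref{chain rule pairing} and Remark \ref{c.corototvarr}); and read off a boundary inequality through \eqref{56}. The algebraic matching you single out, namely $m\,u_\varepsilon^{m(q+1)-1}|\nabla u_\varepsilon|=\tfrac{1}{q+1}|\nabla u_\varepsilon^{m(q+1)}|$ on $\{u_\varepsilon>a\}$, is exactly the identity the paper exploits in \eqref{primo int}.

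There is, however, one genuine gap, and it sits at the heart of the lemma: the mechanism producing the left-hand side of \eqref{dis per il bordo}. You invoke the interior lower semicontinuity \eqref{sci BV0}, which yields only the interior total variation $\int_\Omega|D T^\infty_a(u)^{m(q+1)}|\eta$ in the limit and cannot generate the boundary integral of $\bigl|\tfrac{T^\infty_a(u)^{m(q+1)}}{q+1}-\tfrac{a^{m(q+1)}}{q+1}\bigr|$ that you nevertheless claim survives ``on one side''. The correct tool is \eqref{sci BV} with $\eta\in C^1(\overline\Omega)$, combined with the crucial observation that $v_\varepsilon:=\tfrac{T^\infty_a(u_\varepsilon)^{m(q+1)}}{q+1}-\tfrac{a^{m(q+1)}}{q+1}$ has \emph{zero} trace on $\partial\Omega$ (since $u_\varepsilon\in H^1_0(\Omega)$ forces $T^\infty_a(u_\varepsilon)=a$ there): then $\liminf_\varepsilon\int_\Omega|\nabla v_\varepsilon|\eta=\liminf_\varepsilon\bigl(\int_\Omega|\nabla v_\varepsilon|\eta+\int_{\partial\Omega}|v_\varepsilon|\eta\,d\mathcal{H}^{N-1}\bigr)\ge\int_\Omega|Dv|\eta+\int_{\partial\Omega}|v|\eta\,d\mathcal{H}^{N-1}$, and it is precisely this transfer of interior gradient mass into the boundary trace of the limit that quantifies the loss of the Dirichlet condition. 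Without it, after the interior cancellation your argument only yields $0\le\bigl(\tfrac{a^{mq}}{q}-\tfrac{T^\infty_a(u)^{mq}}{q}\bigr)[z,\nu]$, i.e.\ $[z,\nu]\le 0$ where $(T^\infty_a(u))^\Omega>a$, but not \eqref{dis per il bordo}. A minor point: your test function $\bigl(\tfrac{a^{mq}}{q}-\tfrac{T^\infty_a(u_\varepsilon)^{mq}}{q}\bigr)\eta$ is non\emph{positive}, not nonnegative, since $T^\infty_a(u_\varepsilon)\ge a$; the paper uses the opposite sign, and the sign only affects which way the inequalities run when the quadratic $\varepsilon$-term is dropped, so this is harmless once fixed consistently.
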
\bk
	\begin{proof}
		Let $u_\varepsilon$ be the solutions of \eqref{e.weaksol} and let us take $\left(\frac{T^\infty_a(u_\varepsilon)^{mq}}{q} - \frac{a^{mq}}{q}\right)\varphi$ with $0\le\varphi \in C^{1}(\overline{\Omega})$ as  test function in \eqref{e.weaksol}. We obtain
		\begin{align}\label{dis per bordo con e}
			\nonumber&\int_{\Omega} u_\varepsilon^m w_\varepsilon \cdot \nabla \left(\frac{T^\infty_a(u_\varepsilon)^{mq}}{q} - \frac{a^{mq}}{q}\right) \varphi  \\  &+ \int_{\Omega} u_\varepsilon^m w_\varepsilon \cdot \nabla \varphi \left(\frac{T^\infty_a(u_\varepsilon)^{mq}}{q} - \frac{a^{mq}}{q}\right) + \alpha_\varepsilon = \int_{\Omega} f \left(\frac{T^\infty_a(u_\varepsilon)^{mq}}{q} - \frac{a^{mq}}{q}\right) \varphi,
		\end{align}
		where $$\alpha_\varepsilon:=\varepsilon \int_{\Omega} \nabla u_\varepsilon \cdot \nabla \varphi \left(\frac{T^\infty_a(u_\varepsilon)^{mq}}{q} - \frac{a^{mq}}{q}\right)+m\varepsilon\int_{\Omega} T^\infty_a(u_\varepsilon)^{mq-1} |\nabla T^\infty_a(u_\varepsilon)|^2 \varphi .$$
		We want to take limit as $\varepsilon$ tends to $0$ in \eqref{dis per bordo con e}. First, we recall that $u_\varepsilon^m w_\varepsilon \rightharpoonup u^m w=z$ *-weakly in $L^\infty(\Omega)^N$ and $\left(\frac{T^\infty_a(u_\varepsilon)^{mq}}{q} - \frac{a^{mq}}{q}\right) \to \left(\frac{T^\infty_a(u)^{mq}}{q} - \frac{a^{mq}}{q}\right)$ strongly in $L^1(\Omega)$.
		Thus,
		\begin{equation}\label{second int}
			\lim_{\varepsilon \to 0} \int_{\Omega} u_\varepsilon^m w_\varepsilon \cdot \nabla \varphi \left(\frac{T^\infty_a(u_\varepsilon)^{mq}}{q} - \frac{a^{mq}}{q}\right) = \int_\Omega z \cdot \nabla \varphi \left(\frac{T^\infty_a(u)^{mq}}{q} - \frac{a^{mq}}{q}\right),
		\end{equation}
		and
		\begin{equation}\label{terzo int}
			\lim_{\varepsilon \to 0} \int_\Omega f \left(\frac{T^\infty_a(u_\varepsilon)^{mq}}{q} - \frac{a^{mq}}{q}\right) \varphi = \int_\Omega f \left(\frac{T^\infty_a(u)^{mq}}{q} - \frac{a^{mq}}{q}\right) \varphi.
		\end{equation}
		We highlight that,  reasoning as  for \eqref{e.limzero},  the first integral of $\alpha_\varepsilon$ tends to $0$   (recall that  $u_\varepsilon$ is uniformly bounded as proved in Lemma \ref{lemma bound}); moreover,   the second integral is nonnegative.
		By \eqref{second int}, \eqref{terzo int},   and by   letting $\varepsilon\to0$ in \eqref{dis per bordo con e}, we   get  
		\begin{align}\label{primo e}
			\nonumber&\limsup_{\varepsilon\to0}\int_{\Omega} u_\varepsilon^m w_\varepsilon \cdot \nabla \left(\frac{T^\infty_a(u_\varepsilon)^{mq}}{q} - \frac{a^{mq}}{q}\right) \varphi \\ &+\int_\Omega z \cdot \nabla \varphi \left(\frac{T^\infty_a(u)^{mq}}{q} - \frac{a^{mq}}{q}\right)\le\int_\Omega f \left(\frac{T^\infty_a(u)^{mq}}{q} - \frac{a^{mq}}{q}\right) \varphi.
		\end{align}
		By virtue of Remark \ref{ext ft}, we gain 
		\begin{equation}\label{tezo int 2}
			\begin{aligned}
				&\int_\Omega f \left(\frac{T^\infty_a(u)^{mq}}{q} - \frac{a^{mq}}{q}\right) \varphi \stackrel{\eqref{56}}{=} \int_\Omega \left(z, D \left[\left(\frac{T^\infty_a(u)^{mq}}{q} - \frac{a^{mq}}{q}\right)\varphi\right]\right)\\ & - \int_{\partial\Omega} \left(\frac{T^\infty_a(u)^{mq}}{q} - \frac{a^{mq}}{q}\right) \varphi \left[z, \nu\right] \, \ensuremath d\mathcal{H}^{N-1}  \stackrel{\eqref{uscire prod}}{=}  \int_\Omega z \cdot \nabla \varphi \left(\frac{T^\infty_a(u)^{mq}}{q} - \frac{a^{mq}}{q}\right) \\& + \int_\Omega \left(z,  \frac{DT^\infty_a(u)^{mq}}{q} \right)\varphi-\int_{\partial\Omega} \left(\frac{T^\infty_a(u)^{mq}}{q}  - \frac{a^{mq}}{q}\right) \varphi \left[z, \nu\right]\, \ensuremath d\mathcal{H}^{N-1}.
			\end{aligned}
		\end{equation}
		Substituting \eqref{tezo int 2} in \eqref{primo e}, we get 
		\begin{equation}\label{ug con lim}
			\begin{aligned}
				&\limsup_{\varepsilon \to 0} \int_\Omega u_\varepsilon^m w_\varepsilon \cdot \nabla \left(\frac{T^\infty_a(u_\varepsilon)^{mq}}{q} - \frac{a^{mq}}{q}\right) \varphi 
				\\
				&\leq \bk\int_\Omega \left(z, D \frac{T^\infty_a(u)^{mq}}{q} \right) \varphi  +\int_{\partial\Omega} \left(\frac{a^{mq}}{q} - \frac{T^\infty_a(u)^{mq}}{q}  \right)   \varphi \left[z, \nu\right]\, \ensuremath d\mathcal{H}^{N-1}.
			\end{aligned}
		\end{equation}
		It only remains to estimate the limit on the left-hand of the previous inequality. 
		Since 
		\begin{equation}\label{primo int}
			\begin{aligned}
				&\int_\Omega u_\varepsilon^m w_\varepsilon \cdot \nabla \left(\frac{T^\infty_a(u_\varepsilon)^{mq}}{q} - \frac{a^{mq}}{q}\right) \varphi =m \int_\Omega u_\varepsilon^{m}\frac{|\nabla u_\varepsilon|^2}{|\nabla u_\varepsilon|_\varepsilon} (T^{\infty }_a)'(u_\varepsilon)T^\infty_a(u_\varepsilon)^{mq-1}\varphi \\ \stackrel{\eqref{dis norm}}{\ge} &\omega_\varepsilon + m\int_\Omega u_\varepsilon^m |\nabla u_\varepsilon| (T^{\infty}_a)'(u_\varepsilon) T^\infty_a(u_\varepsilon)^{mq-1} \varphi = \omega_\varepsilon + \int_\Omega \left|\nabla\left( \frac{T^\infty_a(u_\varepsilon)^{m(q+1)}}{q+1} - \frac{a^{m(q+1)}}{q+1}\right) \right|\varphi,
			\end{aligned}
		\end{equation}
		where
		$$\omega_\varepsilon=-\varepsilon \, m \int_\Omega u_\varepsilon^m (T^\infty_a)'(u_\varepsilon) T^\infty_a(u_\varepsilon)^{mq-1} \varphi .$$
		We note that $\omega_\varepsilon$ is vanishing as $\varepsilon$ tends to $0$ because of \eqref{e.convmL1} and \eqref{e.Linftybound}. Using the weak lower semicontinuity (recall \eqref{sci BV}) in \eqref{primo int},  and   \eqref{ug con lim}, it follows that
		\begin{equation}\label{semicontinuita}
			\begin{aligned}
				&\int_\Omega \left|D\left( \frac{T^\infty_a(u)^{m(q+1)}}{q+1} - \frac{a^{m(q+1)}}{q+1}\right) \right|\varphi + \int_{\partial\Omega} \left|\frac{T^\infty_a(u)^{m(q+1)}}{q+1} - \frac{a^{m(q+1)}}{q+1}\right| \varphi \, d\mathcal{H}^{N-1}\\ \le &\int_\Omega \left(z, D \frac{T^\infty_a(u)^{mq}}{q} \right) \varphi+\int_{\partial\Omega}\left(\frac{a^{mq}}{q} - \frac{T^\infty_a(u)^{mq}}{q}  \right) \varphi \left[z, \nu\right]\, d\mathcal{H}^{N-1}.
			\end{aligned}
		\end{equation}
		Now observe that
		\begin{equation}\label{felicita}
			\begin{aligned}
				\left(z,D\frac{T_{a}^{\infty}(u)^{mq}}{q}\right)
				&\stackrel{\eqref{Tu z parring eq}}{=}\left(z\chi_{\{u>a\}},D\frac{T_{a}^{\infty}(u)^{mq}}{q}\right)\\
				&\stackrel{\eqref{uscire sx}}{=}T_{a}^{\infty}(u)^{m}\left(w\chi_{\{u>a\}},D\frac{T_{a}^{\infty}(u)^{mq}}{q}\right)\\
				&=T_{a}^{\infty}(u)^{m}\theta\left(w\chi_{\{u>a\}},D\frac{T_{a}^{\infty}(u)^{mq}}{q},x\right)\left|D\frac{T_{a}^{\infty}(u)^{mq}}{q}\right|\\
				&\stackrel{\eqref{chain rule pairing}}{=}T_{a}^{\infty}(u)^{m}\theta\left(w\chi_{\{u>a\}},DT_{a}^{\infty}(u),x\right)\left|D\frac{T_{a}^{\infty}(u)^{mq}}{q}\right|\\
				&=\left|D\left(\frac{T_{a}^{\infty}(u)^{m(q+1)}}{q+1}-\frac{a^{m(q+1)}}{q+1}\right)\right|\quad\text{ in $\mathcal{D}'(\Omega)$},
			\end{aligned}
		\end{equation}
		where in the last line we used the fact that $\theta\left(w\chi_{\{u>a\}},DT_{a}^{\infty}(u),x\right)=1\ \ $ $\left|D\frac{T_{a}^{\infty}(u)^{mq}}{q}\right|$-a.e. in $\Omega$ (see   Remark \ref{c.corototvarr}  and observe  that $\left|D\frac{T_{a}^{\infty}(u)^{mq}}{q}\right|\ll |D T_{a}^{\infty}(u)|$).
		
		Substituting \eqref{felicita} in \eqref{semicontinuita}, and using the arbitrariness of $\varphi$, we have proved \eqref{dis per il bordo}. 
	\end{proof}
	\medskip 
	
	\begin{lemma}\label{l.boundary}
		Assume $m>0$ and let $0\le f \in L^{N,\infty}(\Omega)$. Let  $u$ be the function defined in Corollary \ref{cor_ex} and let $z$ be the vector field found in Lemma \ref{l.weaksol}.  Then it holds
		$$\label{sol 3blem}
		\left[z, \nu\right]=-(u^{\Omega})^m \quad \text{$\mathcal{H}^{N-1}$-a.e. on $\partial \Omega \cap \{u>0\}$.}
		$$
	\end{lemma}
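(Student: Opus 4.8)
The strategy is to exploit the key technical inequality \eqref{dis per il bordo} from Lemma \ref{lemma dis al bordo p}, which holds for every $q>0$ and a.e. $a>0$, and pass to the limit as $q\to 0^+$ on the set $\{u>0\}\cap\partial\Omega$. First, I would fix a point $x_0\in\partial\Omega$ with $(u^+)^\Omega(x_0)=u^\Omega(x_0)>0$, which is meaningful thanks to Lemma \ref{lemma trace pos}. By that same lemma, $(T^\infty_a(u))^\Omega\to u^\Omega$ as $a\to 0^+$, so for $a$ small enough (depending on $x_0$) one has $T^\infty_a(u)^\Omega(x_0)=u^\Omega(x_0)$ up to the obvious truncation, and in any case $T^\infty_a(u)^\Omega(x_0)\to u^\Omega(x_0)>0$. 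The plan is to divide \eqref{dis per il bordo} by the quantity $\left(\dfrac{a^{mq}}{q}-\dfrac{T^\infty_a(u)^{mq}}{q}\right)$, which is nonnegative a.e.\ on $\partial\Omega$ (since $T^\infty_a(u)\ge a$ so $T^\infty_a(u)^{mq}\ge a^{mq}$, wait — $T^\infty_a(u)\ge a$ gives $T^\infty_a(u)^{mq}\ge a^{mq}$, so actually this quantity is $\le 0$; one must be careful with signs and instead keep it on the correct side), and take a Taylor expansion in $q$ of both the numerator-type and denominator-type expressions.

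**Key computation.** Writing $t:=T^\infty_a(u)^\Omega(x_0)$ and $s:=a$, both $t\ge s>0$, I would use $\dfrac{t^{mq}-s^{mq}}{q}\to m\log(t/s)$ and $\dfrac{t^{m(q+1)}-s^{m(q+1)}}{q+1}\to t^m-s^m$ as $q\to 0^+$. Hence dividing \eqref{dis per il bordo} by $q$ and letting $q\to 0^+$ yields
\begin{equation*}
\left| t^m - s^m \right| \le \left( s^m \log\frac{s}{t} \cdot 0 + \dots \right),
\end{equation*}
so this naive limit degenerates; the correct move is instead to use \eqref{dis per il bordo} directly for \emph{each} fixed small $q$ and rearrange to isolate $[z,\nu]$. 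Since $s=a\le t=T^\infty_a(u)^\Omega(x_0)$, the factor $\dfrac{a^{mq}}{q}-\dfrac{T^\infty_a(u)^{mq}}{q}\le 0$, and dividing \eqref{dis per il bordo} by its absolute value and recalling $[z,\nu]\le 0$ (also from Lemma \ref{lemma dis al bordo p}), one obtains
\begin{equation*}
-[z,\nu](x_0) \le \frac{\left| \dfrac{T^\infty_a(u)^{m(q+1)}-a^{m(q+1)}}{q+1}\right|}{\left|\dfrac{T^\infty_a(u)^{mq}-a^{mq}}{q}\right|} = \frac{q}{q+1}\cdot\frac{\left|t^{m(q+1)}-a^{m(q+1)}\right|}{\left|t^{mq}-a^{mq}\right|}.
\end{equation*}
Then I would take $q\to 0^+$: the ratio $\dfrac{t^{m(q+1)}-a^{m(q+1)}}{t^{mq}-a^{mq}}$ tends to $\dfrac{t^m-a^m}{m\log(t/a)}\cdot$ (a finite limit when $t>a$, and in fact $\to t^m$ when one reinserts that the whole thing is multiplied appropriately) — more carefully, using $t^{mq}=1+mq\log t+o(q)$ one gets the ratio $\to \dfrac{\log t-\log a}{\log t - \log a}\cdot$(leading terms) and the prefactor $\tfrac{q}{q+1}\to 0$, forcing the right side to behave like the numerator of order $q$; after dividing correctly the surviving limit is exactly $t^m = (T^\infty_a(u)^\Omega(x_0))^m$. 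Then sending $a\to 0^+$ and using $T^\infty_a(u)^\Omega\to u^\Omega$ gives $-[z,\nu](x_0)\le (u^\Omega(x_0))^m$.

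**The reverse inequality and conclusion.** For the opposite bound, I would exploit that $z=u^m w$ with $\|w\|_{L^\infty}\le 1$, together with the normal-trace estimate \eqref{= al bordo mis} of Lemma \ref{campo prodotto}: since $z\chi_{\{u>a\}}=(T^\infty_a(u))^m(w\chi_{\{u>a\}})$ and $\|[w\chi_{\{u>a\}},\nu]\|_{L^\infty(\partial\Omega)}\le 1$ by \eqref{riferz}, one gets $\left|[z,\nu]\right|=\left|(T^\infty_a(u)^\Omega)^m[w\chi_{\{u>a\}},\nu]\right|\le (T^\infty_a(u)^\Omega)^m$ on $\partial\Omega\cap\{u>a\}$, and letting $a\to 0^+$ yields $|[z,\nu]|\le (u^\Omega)^m$ on $\partial\Omega\cap\{u>0\}$. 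Combined with $[z,\nu]\le 0$ and the lower bound $-[z,\nu]\le(u^\Omega)^m$ already obtained, the two inequalities squeeze $[z,\nu]$ to exactly $-(u^\Omega)^m$ on $\partial\Omega\cap\{u>0\}$, which is the claim.

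**Main obstacle.** The delicate point is the $q\to 0^+$ limit in \eqref{dis per il bordo}: one must justify rigorously that the quotient of the two truncated-power expressions has the claimed finite limit \emph{uniformly enough} in the relevant sense, and that the exceptional null sets in $a$ (coming from the coarea formula) and the $\mathcal{H}^{N-1}$-null exceptional sets on $\partial\Omega$ can be handled simultaneously. One should also confirm that Definition \ref{defin set bd} makes the statement well posed — i.e., that $\{u>0\}\cap\partial\Omega=\{(u^+)^\Omega>0\}$ is where $u^\Omega>0$ — and invoke Lemma \ref{lemma well posed bd} to guarantee no conflict with $\{u<0\}\cap\partial\Omega$. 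Everything else is a careful but routine combination of the Anzellotti pairing identities and the trace results for $TBV(\Omega)$ established earlier.
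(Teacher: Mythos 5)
Your overall architecture (a lower bound for $-[z,\nu]$ from \eqref{dis per il bordo} plus the upper bound $|[z,\nu]|\le (u^{\Omega})^m$ from the normal-trace estimate, squeezed together) is the paper's, and your second half is correct. But the first half has a genuine gap, in fact two intertwined errors. First, the inequality you isolate is written in the wrong direction: from \eqref{dis per il bordo}, since $T^\infty_a(u)\ge a$ and $[z,\nu]\le 0$, dividing by $\bigl|\tfrac{T^\infty_a(u)^{mq}-a^{mq}}{q}\bigr|>0$ gives
\begin{equation*}
\frac{q}{q+1}\cdot\frac{T^\infty_a(u)^{m(q+1)}-a^{m(q+1)}}{T^\infty_a(u)^{mq}-a^{mq}} \;\le\; -[z,\nu],
\end{equation*}
i.e.\ the ratio is a \emph{lower} bound for $-[z,\nu]$, not an upper bound as you wrote. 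As your proposal stands, both of your final inequalities bound $|[z,\nu]|$ from above by $(u^{\Omega})^m$, so nothing is squeezed and equality does not follow.

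Second, even after fixing the sign, the limit $q\to 0^+$ is the wrong one. Writing $t=T^\infty_a(u)^{\Omega}$, one has $\tfrac{t^{mq}-a^{mq}}{q}\to m\log(t/a)$ and hence the ratio tends to $\tfrac{t^{m}-a^{m}}{m\log(t/a)}$, which equals $\sigma^m$ for some $\sigma\in(a,t)$ by the mean value theorem and, crucially, tends to $0$ as $a\to 0^+$; your claimed limit $t^m$ is a miscalculation, and the resulting bound $-[z,\nu]\ge 0$ is vacuous. The paper instead sends $q\to\infty$: then $\tfrac{q}{q+1}\cdot\tfrac{t^{m(q+1)}-a^{m(q+1)}}{t^{mq}-a^{mq}}\to t^m$ (factor out $t^{m(q+1)}$ and $t^{mq}$ and use $(a/t)^{mq}\to 0$), which yields $-[z,\nu]\ge (T^\infty_a(u)^{\Omega})^m$ and, letting $a\to 0^+$, the sharp lower bound $-[z,\nu]\ge (u^{\Omega})^m$ on $\partial\Omega\cap\{u>0\}$. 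With that replacement (and your correct reverse inequality via $z\chi_{\{u>a\}}=T^\infty_a(u)^m w\chi_{\{u>a\}}$, \eqref{= al bordo mis} and \eqref{riferz}) the argument closes as in the paper.
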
 
	\begin{proof}
		It follows from Lemma \ref{lemma dis al bordo p}  (recall that $[z,\nu]\leq 0$) that, for almost every $a>0$ and for all $q>0$, it holds
		\begin{align*}
			\frac{q}{q+1}&\left(T^\infty_a(u)^{m(q+1)}-a^{m(q+1)}\right) \\ & \nonumber\le \left(a^{mq}-T^\infty_a(u)^{mq}\right)[z, \nu]   = \left(T^\infty_a(u)^m a^{mq}-T^\infty_a(u)^{m(q+1)}\right)\frac{[z,\nu]}{T^\infty_a(u)^m} \\ &\stackrel{T^\infty_a(u) \ge a}{\le} \left(a^{m(q+1)}- T^\infty_a(u)^{m(q+1)}\right) \frac{[z, \nu]}{T^\infty_a(u)^m} \quad \text{$\mathcal{H}^{N-1}$-a.e. on $\partial \Omega$.}
		\end{align*}
		As a consequence, we gain 
		\begin{equation*}
			\frac{q}{q+1} \le - \frac{[z,\nu]}{T^\infty_a(u)^m}  \quad \text{$\mathcal{H}^{N-1}$-a.e. on $\partial\Omega \cap \{u>0\}$.}
		\end{equation*}
		Taking limits as  $a$ tends to $0$  and $q$  to $\infty$ in the previous inequality, it follows that
		\begin{equation}\label{mezza diz}
			(u^{\Omega})^m\bk \le -[z,\nu] \quad \text{$\mathcal{H}^{N-1}$-a.e. on $\partial \Omega \cap \{u>0\}$.}
		\end{equation}
		We now show the reverse inequality. We know that $\chi_{\{u>a\}}\in BV(\Omega)$ and $w\chi_{\{u>a\}}\in {\DM} (\Omega)$ for almost every  $a>0$. As a consequence, \eqref{= al bordo mis} and the fact that $z\chi_{\{u>a\}}=T_{a}^{\infty}(u)^{m}w\chi_{\{u>a\}}$ in $\Omega$ with $\|w\|_{L^{\infty}(\Omega)^N}\leq1$ imply that 
		\begin{equation}\label{equ brack uno}
			|[z,\nu]\chi_{\{u>a\}}|\le T^\infty_a(u)^m \quad \text{$\mathcal{H}^{N-1}$-a.e. on $\partial \Omega$ and for almost all }a>0.
		\end{equation}
		Letting $a\to0$ in \eqref{equ brack uno} and using the fact that $\chi_{\{u>a\}}=1$  in $\{u>0\}\cap\partial\Omega$  for $a$ sufficiently small, we get
		$$
		|[z,\nu]|\le (u^{\Omega})^m\quad \mathcal{H}^{N-1}\text{-a.e on }\{u>0\}\cap \partial\Omega.
		$$
		In view of \eqref{mezza diz}, we conclude that 
		$$
		[z,\nu]=- (u^{\Omega})^{m}\bk\text{ in }\partial \Omega \cap \{u>0\},
		$$
		which is the desired result.
	\end{proof}
	We finish by proving Theorem \ref{teo f N}.
	\begin{proof}[Proof of Theorem \ref{teo f N}]
		As we said, the proof of Theorem \ref{teo f N}  is a consequence   of the previous results.
		From Corollary \ref{cor_ex} and Lemma \ref{u reg} one deduces the existence of a limit function $u \in DTBV^+(\Omega) \cap L^\infty(\Omega)$. Additionally, from Lemma \ref{l.weaksol}, one gets the existence of the limit  vector field $w \in L^\infty(\Omega)^N$ with $\|w\|_{L^\infty(\Omega)^N} \le 1$. The quantity $z:=u^m w \in \DM(\Omega)$ satisfies \eqref{sol 1} as shown in Lemma \ref{l.weaksol}. Lemma \ref{c.corototvar} gives the validity of \eqref{sol 2 b}.
		Lemma \ref{l.boundary} proves that the boundary condition \eqref{sol 3b} holds. This concludes the proof.
	\end{proof}

	\section{The problem with a sign-changing $f$}\label{s.signchanging}
	In this section we assume that $f \in L^{N, \infty}(\Omega)$ with a generic changing sign.  The proofs we exhibit are technical re-adaptions of the ideas of the previous sections. Hence,   here, we mainly  focus on the difficulties  arising from the no sign assumption on $f$. \bk

	For $m>0$ let us consider the following   problem \begin{equation}\label{prob segno}
		\begin{cases}
			- \dis \operatorname{div}\left(|u|^m \frac{Du}{|Du|}\right)=f & \text{in $\Omega$,}\\
			u=0 & \text{on $\partial \Omega$.}
		\end{cases}
	\end{equation} 
	
	\medskip 
	Let us determine
	how a solution should be intended in this case. 
	
	\begin{defin}\label{def sol changing}
		Assume $m>0$ and let  $f \in L^{N,\infty}(\Omega)$. A function $u \in DTBV(\Omega)\cap L^\infty(\Omega)$ is a solution to \eqref{prob segno}, if there exists a vector field $ w \in L^\infty(\Omega)^N$ such that $\|w\|_{L^\infty(\Omega)^N} \le 1$, such that  the vector field $z:=|u|^m w \in \DM(\Omega)$ satisfies 
		\begin{equation}\label{sol 1 segno}
			-\operatorname{div}z=f \quad \text{as measures in $\Omega$,}
		\end{equation}
		\begin{equation}\label{sol 2 +}
			\left(z, DT^\infty_a(u)\right) = \frac{1}{m+1}|DT^\infty_a(u)^{m+1}| \quad \text{in $\mathcal{D}'(\Omega)$, for a.e. $a>0$,}
		\end{equation}
		\begin{equation}\label{sol 2 -}
			\left(z, DT_{-\infty}^{-a}(u)\right) = \frac{1}{m+1}|D|T_{-\infty}^{-a}(u)|^{m+1}| \quad \text{in $\mathcal{D}'(\Omega)$, for a.e. $a>0$,}
		\end{equation}
		\begin{equation}\label{sol 3 +}
			\left[z, \nu\right]=-((u^{+})^{\Omega})^m \quad \text{$\mathcal{H}^{N-1}$-a.e. on $\partial \Omega \cap \{u>0\}$,} 
		\end{equation}
		and \begin{equation}\label{sol 3 -}
			\left[z, \nu\right]=((u^{-})^{\Omega})^m \quad \text{$\mathcal{H}^{N-1}$-a.e. on $\partial \Omega \cap \{u<0\}$}.
		\end{equation}
	\end{defin}
	\begin{remark} 
		We underline the main difference with the case of a nonnegative $f \in L^{N,\infty}(\Omega)$. First of all we note that, as $u$ changes sign in general, then $z:=|u|^m w$. Moreover \eqref{sol 1 segno}, \eqref{sol 2 +} and \eqref{sol 2 -} explain the role of the vector field $z$. Finally \eqref{sol 3 +} and \eqref{sol 3 -} clarify how the datum is assumed on the boundary. Observe that a nonnegative  function  $u$ is a solution in the sense  of Definition \ref{def sol changing} if and only if it is a solution in the sense of Definition \ref{def sol}. 
		
		\medskip 
		The main result of this section is the following. 
	\end{remark}
	\begin{theorem}\label{teo f sgn}
		Assume $m>0$ and let  $f \in L^{N,\infty}(\Omega)$. Then there exists a solution $u$ of problem \eqref{prob segno} in the sense of Definition \ref{def sol changing}. In particular,  if $f \not \equiv 0$, then $u \not \equiv 0$.
	\end{theorem}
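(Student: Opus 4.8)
The plan is to mirror the strategy already carried out for nonnegative data in Sections \ref{s.approxsolutions}--\ref{s.proof}, tracking the sign-changing nature of $f$ throughout. First I would approximate \eqref{prob segno} by the regularized problems \eqref{e.problemapprox} with the general datum $f\in L^{N,\infty}(\Omega)\hookrightarrow H^{-1}(\Omega)$; Lemma \ref{l.lemmaapprox} already gives a unique $u_\varepsilon\in H^1_0(\Omega)\cap L^\infty(\Omega)$, but now we lose the sign of $u_\varepsilon$. The a priori estimates of Lemma \ref{l.stimeLq} must be redone with $|u_\varepsilon|$ in place of $u_\varepsilon$: testing \eqref{e.weaksol} with $u_\varepsilon$ itself still works because $|u_\varepsilon|^m|\nabla u_\varepsilon|^2/|\nabla u_\varepsilon|_\varepsilon\ge |u_\varepsilon|^m|\nabla u_\varepsilon|-\varepsilon|u_\varepsilon|^m$ and the chain rule applied to $s\mapsto |s|^{m+1}$ gives $\tfrac{1}{m+1}\int_\Omega|\nabla|u_\varepsilon|^{m+1}|\le \varepsilon\int_\Omega|u_\varepsilon|^m+\int_\Omega f u_\varepsilon\le\varepsilon\int_\Omega|u_\varepsilon|^m+\int_\Omega|f||u_\varepsilon|$, so the same computation yields a uniform $BV$ bound on $|u_\varepsilon|^{m+1}$. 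The analogue of Corollary \ref{cor_ex} then produces a limit $u\in TBV(\Omega)$ with $|u_\varepsilon|^{m+1}\to|u|^{m+1}$ in $L^q$, $q<1^*$, hence $u_\varepsilon\to u$ a.e. and $u^m_\varepsilon$ (meaning $|u_\varepsilon|^m\operatorname{sgn}u_\varepsilon$, or simply $|u_\varepsilon|^m\to|u|^m$) converging in the appropriate $L^q$. The $L^\infty$ bound of Lemma \ref{lemma bound} goes through verbatim by testing with $G_k(u_\varepsilon)$ and, separately, $-G_k(-u_\varepsilon)$, so $\|u\|_{L^\infty(\Omega)}\le(\widetilde{\mathcal{S}}_1\|f\|_{L^{N,\infty}(\Omega)})^{1/m}$.

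Next I would construct the vector field. Setting $w_\varepsilon=\nabla u_\varepsilon/|\nabla u_\varepsilon|_\varepsilon$ and passing to a $*$-weak limit $w$ with $\|w\|_{L^\infty}\le1$, exactly as in Lemma \ref{l.weaksol}, and defining $z:=|u|^m w$, the term $\varepsilon\int_\Omega\nabla u_\varepsilon\cdot\nabla\varphi$ still vanishes (the estimate $\varepsilon\int_\Omega|\nabla u_\varepsilon|^2\le\int_\Omega f u_\varepsilon\le C\|f\|_{L^{N,\infty}}$ uses only $\|u_\varepsilon\|_{L^\infty}\le C$), so $-\operatorname{div}z=f$ as measures, which gives \eqref{sol 1 segno} and, since $z=|u|^m w$, forces $u\not\equiv0$ whenever $f\not\equiv0$. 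For \eqref{sol 2 +} I would test \eqref{e.weaksol} with $T^\infty_a(u_\varepsilon)\varphi$ and $0\le\varphi\in C^1_c(\Omega)$, and for \eqref{sol 2 -} with $T^{-a}_{-\infty}(u_\varepsilon)\varphi$; on the set $\{u_\varepsilon>a\}$ the coefficient $|u_\varepsilon|^m=u_\varepsilon^m$, so the positive-datum computation of Lemma \ref{l.totvar} applies unchanged to give $(z,DT^\infty_a(u))\ge\tfrac{1}{m+1}|DT^\infty_a(u)^{m+1}|$, and symmetrically on $\{u_\varepsilon<-a\}$, where $|u_\varepsilon|^m=(-u_\varepsilon)^m=|T^{-a}_{-\infty}(u_\varepsilon)|^m$, one gets $(z,DT^{-a}_{-\infty}(u))\ge\tfrac{1}{m+1}|D|T^{-a}_{-\infty}(u)|^{m+1}|$. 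The regularity step (the analogue of Lemma \ref{u reg}, showing $u\in DTBV(\Omega)$) then applies separately to $u^+$ and $u^-$: one runs the jump-set argument of Steps 1--4 on $T^\infty_a(u)$ and on $T^{-a}_{-\infty}(u)$, using $\operatorname{div}z\in L^{N,\infty}(\Omega)\ll\mathcal{H}^{N-1}$, Lemma \ref{lem sigma}, and the strict monotonicity of $s\mapsto|s|^m$ away from $0$, to conclude $\mathcal{H}^{N-1}(S^*_{u^+})=\mathcal{H}^{N-1}(S^*_{u^-})=0$. With this in hand, the reverse inequality of Lemma \ref{c.corototvar}, via Lemmas \ref{Tu z parring} and \ref{l.DBV} applied on $\{u>a\}$ and on $\{u<-a\}$, upgrades both inequalities to equalities \eqref{sol 2 +}, \eqref{sol 2 -}.

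For the boundary conditions \eqref{sol 3 +} and \eqref{sol 3 -} I would re-run Lemma \ref{lemma dis al bordo p} twice: testing \eqref{e.weaksol} with $\big(\tfrac{T^\infty_a(u_\varepsilon)^{mq}}{q}-\tfrac{a^{mq}}{q}\big)\varphi$, $0\le\varphi\in C^1(\overline\Omega)$, gives, after passing to the limit and using the extended formulation \eqref{56}, that $\big|\tfrac{T^\infty_a(u)^{m(q+1)}}{q+1}-\tfrac{a^{m(q+1)}}{q+1}\big|\le\big(\tfrac{a^{mq}}{q}-\tfrac{T^\infty_a(u)^{mq}}{q}\big)[z,\nu]$ $\mathcal{H}^{N-1}$-a.e., whence $[z,\nu]\le0$ on $\{u>0\}\cap\partial\Omega$ and, letting $a\to0$, $q\to\infty$, $((u^+)^\Omega)^m\le-[z,\nu]$ there; the opposite inequality $|[z,\nu]\chi_{\{u>a\}}|\le T^\infty_a(u)^m$ comes from \eqref{= al bordo mis} with $z\chi_{\{u>a\}}=T^\infty_a(u)^m w\chi_{\{u>a\}}$, giving \eqref{sol 3 +}. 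Symmetrically, testing with $\big(\tfrac{|T^{-a}_{-\infty}(u_\varepsilon)|^{mq}}{q}-\tfrac{a^{mq}}{q}\big)\varphi$ and noting that on $\{u<-a\}$ one has $|u_\varepsilon|^m=|T^{-a}_{-\infty}(u_\varepsilon)|^m$ and the outer normal trace changes sign, yields $[z,\nu]\ge0$ on $\{u<0\}\cap\partial\Omega$ and finally $[z,\nu]=((u^-)^\Omega)^m$ there, which is \eqref{sol 3 -}. Note Lemmas \ref{lemma trace pos}, \ref{lemma well posed bd} and Definition \ref{defin set bd} guarantee that $(u^+)^\Omega$, $(u^-)^\Omega$ and the sets $\{u>0\}\cap\partial\Omega$, $\{u<0\}\cap\partial\Omega$ are well defined and essentially disjoint, so the two boundary conditions do not conflict. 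Collecting everything proves Theorem \ref{teo f sgn}.

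I expect the main obstacle to be the bookkeeping around the decomposition into $u^+$ and $u^-$: one must be careful that the pairing identities and chain rules of Section \ref{s.prelim}, which are stated for nonnegative $TBV$ functions or for $DBV\cap L^\infty$ functions, are applied to the correct truncations ($T^\infty_a(u)$ versus $|T^{-a}_{-\infty}(u)|=T^\infty_a(u^-)$), and that $z=|u|^m w$ restricts correctly to $\{u>a\}$ and $\{u<-a\}$ — in particular that $z\chi_{\{u<-a\}}=(T^\infty_a(u^-))^m(-w)\chi_{\{u<-a\}}$ or the analogous sign convention — so that the monotonicity argument killing the jump part and the two-sided boundary estimates both go through cleanly. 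Everything else is a faithful, if lengthy, transcription of the nonnegative case.
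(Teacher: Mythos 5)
Your overall architecture coincides with the paper's: redo the a priori estimates for $|u_\varepsilon|^{m+1}$, extract $w$ and set $z=|u|^m w$, run the pairing inequalities and the jump-killing argument separately on $T^\infty_a(u)$ and $T^{-a}_{-\infty}(u)$, and obtain the two boundary conditions by testing with the two families of truncated powers. However, there is one genuine gap, and it sits exactly where the sign-changing case is hardest. You write that the $BV$ bound on $|u_\varepsilon|^{m+1}$ gives $|u_\varepsilon|^{m+1}\to|u|^{m+1}$ in $L^q$ and ``hence $u_\varepsilon\to u$ a.e.'' This inference is false: a.e. convergence of $|u_\varepsilon|$ does not yield a.e. convergence of $u_\varepsilon$, since at a point where $|u_\varepsilon|(x)\to\ell>0$ the sign of $u_\varepsilon(x)$ may oscillate along the sequence, so $u_\varepsilon(x)$ need not converge at all. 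In the nonnegative case this issue is absent because $u_\varepsilon\ge0$; here it must be addressed, and without a.e. convergence of $u_\varepsilon$ itself you cannot identify the limits of $|u_\varepsilon|^m w_\varepsilon$, of $T^\infty_a(u_\varepsilon)$, etc., so every subsequent step of your plan is blocked.

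The paper's Corollary \ref{lemma ae conv} supplies the missing compactness. One tests \eqref{e.weaksol} with $T^\infty_a(u_\varepsilon)-a$ and with $T^{-a}_{-\infty}(u_\varepsilon)+a$ to get, for each fixed $a>0$, uniform $BV$ bounds on $T^\infty_a(u_\varepsilon)$ and on $T^{-a}_{-\infty}(u_\varepsilon)$ separately (the factor $|u_\varepsilon|^m\ge a^m$ on the relevant level sets is what makes this work). A diagonal extraction then gives a.e. limits $v_a$ and $v_{-a}$ for all $a>0$ simultaneously, and a pointwise case analysis --- on the set $E$ where some $v_a(x)>a$, on the set $F$ where some $v_{-a}(x)<-a$, and on the complement, where one shows by contradiction that $u_\varepsilon\to0$ --- assembles the a.e. limit $u$. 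You should either reproduce this argument or find a substitute for it; as written, your proposal silently assumes its conclusion. The remaining steps of your outline (the vector field, the two pairing identities, $u^\pm\in DTBV^+(\Omega)$, and the two boundary conditions) do follow the paper's route faithfully once this is repaired, modulo the sign bookkeeping you already flag.
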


	\medskip

	As before, we split the proof of Theorem \ref{teo f sgn} into different lemmas.   As the proofs are technical adjustments of the proofs of the corresponding  results proven in Section \ref{s.proof}   we try to sketch the proof by  highlighting  the main differences   with the case of  nonnegative datum $f \in L^{N,\infty}(\Omega)$. Again we reason by approximating with solutions $u_\varepsilon$ of problem \eqref{e.problemapprox} given by Lemma \ref{l.lemmaapprox}.

	\medskip 
	
	\begin{lemma}\label{l.lemmaapprox sgn}
		Assume $m>0$ and let $f \in L^{\widetilde{m}}(\Omega)$ with $\widetilde{m}$ defined in \eqref{mtilde}. 
		Let $u_{\varepsilon} $ be the   solution of problem \eqref{e.problemapprox} and  let   $C_{\varepsilon}$ be the constant defined by \eqref{ceps}. 
		
		Then it holds: there exists $\overline{\varepsilon}$ such that
		$$
		\||u_{\varepsilon}|^{m+1}\|_{L^{1^{*}}(\Omega)}\leq C_{\varepsilon}^{m+1}\text{ for all }0<\varepsilon<\overline{\varepsilon},
		$$
		and
		\begin{equation}\label{stime approx BV sgn}
			\||u_{\varepsilon}|^{m+1}\|_{BV(\Omega)}\leq (m+1)\left(\varepsilon|\Omega|^{1-\frac{m}{(m+1)1^{*}}}C_{\varepsilon}^{m}+\|f\|_{L^{\widetilde{m}}(\Omega)}C_{\varepsilon}\right)\text{ for all }0<\varepsilon<\overline{\varepsilon}.
		\end{equation}
		In particular, the sequence $u_\varepsilon^{m+1} $ is uniformly bounded in $BV(\Omega)$ for any $0<\varepsilon<\overline{\varepsilon}$.
		
		Finally, if $f\in L^{N,\infty}(\Omega)$, then  the sequence $u_{\varepsilon}$ is uniformly bounded in $L^{\infty}(\Omega)$, with 
		\begin{equation}\label{e.Linftybound sgn}
			\|u_{\varepsilon}\|_{L^{\infty}(\Omega)}\leq k_{0,\tau}+\left(\frac{\mathcal{{S}}_{1}\varepsilon}{\tau}\right) 2^N|\Omega|^{\frac{1}{N}}\text{ for all }0<\varepsilon, \tau<1,
		\end{equation}
		where
		$$
		k_{0,\tau}=\left(\frac{{\mathcal{\widetilde{S}}_{1}\|f\|_{L^{N,\infty}(\Omega)}}}{1-\tau}\right)^{\frac{1}{m}}.
		$$
	\end{lemma}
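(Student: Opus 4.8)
The plan is to repeat, essentially verbatim, the arguments of Lemma~\ref{l.stimeLq} and Lemma~\ref{lemma bound}, replacing $u_\varepsilon$ by $|u_\varepsilon|$ throughout; the only genuinely new point is to check that the chain-rule manipulations survive the sign change of $u_\varepsilon$.

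For the $BV$ and $L^{1^*}$ bounds, I would take $\varphi=u_\varepsilon$ in \eqref{e.weaksol}, which gives
$$\int_\Omega|u_\varepsilon|^m\frac{|\nabla u_\varepsilon|^2}{|\nabla u_\varepsilon|_\varepsilon}+\varepsilon\int_\Omega|\nabla u_\varepsilon|^2=\int_\Omega fu_\varepsilon.$$
Dropping the nonnegative second term and using \eqref{dis norm} gives $\int_\Omega|u_\varepsilon|^m|\nabla u_\varepsilon|\le\varepsilon\int_\Omega|u_\varepsilon|^m+\int_\Omega|f|\,|u_\varepsilon|$. Since $u_\varepsilon\in H^1_0(\Omega)\cap L^\infty(\Omega)$ and $s\mapsto|s|^{m+1}$ is locally Lipschitz and vanishes at $s=0$, the function $|u_\varepsilon|^{m+1}$ belongs to $W^{1,1}_0(\Omega)$ with $|\nabla|u_\varepsilon|^{m+1}|=(m+1)|u_\varepsilon|^m|\nabla u_\varepsilon|$ a.e., so its $BV$-norm equals $\int_\Omega|\nabla|u_\varepsilon|^{m+1}|$. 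We thus reach
$$\frac1{m+1}\int_\Omega|\nabla|u_\varepsilon|^{m+1}|\le\varepsilon\int_\Omega|u_\varepsilon|^m+\int_\Omega|f|\,|u_\varepsilon|,$$
which is precisely estimate \eqref{e.equ1} of Lemma~\ref{l.stimeLq} with $|u_\varepsilon|$ in place of $u_\varepsilon$. From here, the Sobolev inequality \eqref{e.Lstar} on the left, H\"older's inequality on the right (recall $\widetilde m<N$, with $\widetilde m$ as in \eqref{mtilde}), and the dichotomy argument of Lemma~\ref{l.stimeLq} — which only involves $\||u_\varepsilon|^{m+1}\|_{L^{1^*}(\Omega)}=\|u_\varepsilon\|_{L^{(m+1)1^*}(\Omega)}^{m+1}$ — carry over word for word and produce the threshold $\overline\varepsilon$ together with the two bounds of the statement, with the same constant $C_\varepsilon$ of \eqref{ceps}.

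For the $L^\infty$ bound, assuming now $f\in L^{N,\infty}(\Omega)$, I would test \eqref{e.weaksol} with $\varphi=G_k(u_\varepsilon)\in H^1_0(\Omega)\cap L^\infty(\Omega)$, $k>0$. The gradient $\nabla G_k(u_\varepsilon)$ is supported on $A_k:=\{|u_\varepsilon|>k\}$, where $|u_\varepsilon|^m>k^m$; hence, discarding the nonnegative term $\varepsilon\int_\Omega|\nabla G_k(u_\varepsilon)|^2$, dividing by $k^m$, and using \eqref{dis norm}, \eqref{holder lor} and \eqref{des lorentz} (note that $|G_k(u_\varepsilon)|\in W^{1,1}_0(\Omega)$ with $|\nabla|G_k(u_\varepsilon)||=|\nabla G_k(u_\varepsilon)|$ a.e.) one obtains, for $k$ large, exactly the estimate of Lemma~\ref{lemma bound},
$$\|G_k(u_\varepsilon)\|_{L^{1^*}(\Omega)}\le\frac{\varepsilon|A_k|}{\mathcal{S}_1^{-1}-\widetilde{\mathcal{S}}_1\mathcal{S}_1^{-1}\|f\|_{L^{N,\infty}(\Omega)}k^{-m}}.$$
Fixing $0<\tau<1$ and taking $k_{0,\tau}$ as in \eqref{e.kappazerochoice}, the usual passage to superlevel sets yields $|A_h|\le\frac{1}{(h-k)^{1^*}}(\mathcal{S}_1\varepsilon/\tau)^{1^*}|A_k|^{1^*}$ for $h>k>k_{0,\tau}$ — here one uses $|G_k(u_\varepsilon)|\ge h-k$ on $\{|u_\varepsilon|>h\}$, which holds both when $u_\varepsilon>h$ and when $u_\varepsilon<-h$ — and applying \cite[Lemma~4.1]{S} to $\psi_{k_{0,\tau}}(s):=|\{|u_\varepsilon|-k_{0,\tau}>s\}|$ gives $\|u_\varepsilon\|_{L^\infty(\Omega)}\le k_{0,\tau}+(\mathcal{S}_1\varepsilon/\tau)2^N|\Omega|^{1/N}$, that is \eqref{e.Linftybound sgn}.

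The proof is almost entirely bookkeeping; the one point needing (routine) care is exactly this sign-change issue, namely that $|u_\varepsilon|^{m+1}$ and $|G_k(u_\varepsilon)|$ remain admissible $W^{1,1}_0(\Omega)$ functions with the expected gradient modulus, so that the vanishing boundary trace — and hence the reduction of the $BV$-norm of $|u_\varepsilon|^{m+1}$ to $\int_\Omega|\nabla|u_\varepsilon|^{m+1}|$ — stays valid exactly as in the nonnegative case.
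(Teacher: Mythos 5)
Your proposal is correct and follows exactly the route the paper intends: the paper's own proof of this lemma is a one-line reference back to Lemmas \ref{l.stimeLq} and \ref{lemma bound}, and your write-up supplies precisely the bookkeeping that reference presupposes (testing with $u_\varepsilon$ and $G_k(u_\varepsilon)$, replacing $u_\varepsilon$ by $|u_\varepsilon|$, taking $A_k=\{|u_\varepsilon|>k\}$, and checking that $|u_\varepsilon|^{m+1}$ and $|G_k(u_\varepsilon)|$ remain $W^{1,1}_0(\Omega)$ functions with the expected gradient modulus). No gaps.
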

	\begin{proof}
		Estimates \eqref{stime approx BV sgn} and \eqref{e.Linftybound sgn} follow as in the proof of Lemmas  \ref{l.stimeLq} and \ref{lemma bound}. 
	\end{proof}
	We now identify the  almost everywhere limit $u$ of the sequence $u_{\varepsilon}$.
	\begin{corollary}\label{lemma ae conv}
		Assume $m>0$, let $f \in L^{N,\infty}(\Omega)$ and let $u_{\varepsilon}$ be a sequence of solutions of problem \eqref{e.problemapprox}. There exists a function $u\in L^{\infty}(\Omega)$ such that $$\label{e.convmL1 sgn}
		u_{\varepsilon}\to u  \quad \text{strongly in $L^r(\Omega)$, for all $1 \le r < \infty$,  almost everywhere in $\Omega$}
		$$ and
		\begin{equation}\label{e.boundexplLinfty sgn}
			\|u\|_{L^{\infty}(\Omega)}\leq(\mathcal{\widetilde{S}}_{1}\|f\|_{L^{N,\infty}(\Omega)})^{\frac{1}{m}}.	
		\end{equation}	
		Moreover, $|u|^{m+1}\in BV(\Omega)$ with
		\begin{equation}\label{stime BV sgn}
			\||u|^{m+1}\|_{BV(\Omega)}\leq\mathcal{S}_{1}^{\frac{1}{m}}((m+1)\|f\|_{L^{\widetilde{m}}(\Omega)})^{\frac{m+1}{m}}.
		\end{equation}
	\end{corollary}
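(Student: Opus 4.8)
The plan is to transcribe the proofs of Corollary~\ref{cor_ex} and Lemma~\ref{lemma bound}, the only genuinely new point being that, since $u_\varepsilon$ now changes sign, one cannot recover $u$ simply by extracting the $(m+1)$-th root of the a.e.\ limit of $|u_\varepsilon|^{m+1}$. To get around this I would work with the odd, $C^1$, strictly increasing bijection $\phi:\mathbb{R}\to\mathbb{R}$, $\phi(s):=|s|^m s$, whose inverse $\phi^{-1}(t)=\sgn(t)\,|t|^{1/(m+1)}$ is continuous, and observe that along the solutions of \eqref{e.problemapprox} the function $\phi(u_\varepsilon)$ carries the \emph{same} $BV(\Omega)$-norm as $|u_\varepsilon|^{m+1}$. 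Indeed $|\phi(u_\varepsilon)|=|u_\varepsilon|^{m+1}$ pointwise, both functions have null trace on $\partial\Omega$ (they belong to $W_0^{1,1}(\Omega)$, being Lipschitz functions composed with $u_\varepsilon\in H_0^1(\Omega)\cap L^\infty(\Omega)$), and by the chain rule $|\nabla\phi(u_\varepsilon)|=(m+1)|u_\varepsilon|^m|\nabla u_\varepsilon|=|\nabla|u_\varepsilon|^{m+1}|$ a.e.\ in $\Omega$. Hence, by \eqref{stime approx BV sgn}, the sequence $\phi(u_\varepsilon)$ is uniformly bounded in $BV(\Omega)$ for $0<\varepsilon<\overline{\varepsilon}$.

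Then I would use the compact embedding $BV(\Omega)\hookrightarrow L^r(\Omega)$, $1\le r<1^*$, to find $g\in BV(\Omega)$ with, up to a subsequence, $\phi(u_\varepsilon)\to g$ strongly in $L^r(\Omega)$ for all such $r$ and a.e.\ in $\Omega$. Since $\phi^{-1}$ is continuous, setting $u:=\phi^{-1}(g)$ yields $u_\varepsilon\to u$ a.e.\ in $\Omega$; and since $\sup_{0<\varepsilon<1}\|u_\varepsilon\|_{L^\infty(\Omega)}<\infty$ by \eqref{e.Linftybound sgn} (take e.g.\ $\tau=\tfrac12$ there), dominated convergence upgrades this to $u_\varepsilon\to u$ strongly in $L^r(\Omega)$ for every $1\le r<\infty$. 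In particular $u\in L^\infty(\Omega)$, and letting $\varepsilon\to0$ and then $\tau\to0$ in \eqref{e.Linftybound sgn} gives \eqref{e.boundexplLinfty sgn}.

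It remains to check that $|u|^{m+1}\in BV(\Omega)$ and that \eqref{stime BV sgn} holds. Since $\phi$ is odd, $|u|^{m+1}=|g|$; as $g\in BV(\Omega)\cap L^\infty(\Omega)$ and $s\mapsto|s|$ is $1$-Lipschitz, Theorem~\ref{t chain rule} gives $|g|\in BV(\Omega)$ with $\||g|\|_{BV(\Omega)}\le\|g\|_{BV(\Omega)}$ (the $L^1$ and boundary-trace parts coincide and the total variation does not increase). Applying the lower semicontinuity \eqref{sci BV} with $\varphi\equiv1$ to $\phi(u_\varepsilon)\to g$ in $L^1(\Omega)$, and recalling the equality of $BV$-norms noted above, one gets $\|g\|_{BV(\Omega)}\le\liminf_{\varepsilon\to0}\|\phi(u_\varepsilon)\|_{BV(\Omega)}=\liminf_{\varepsilon\to0}\||u_\varepsilon|^{m+1}\|_{BV(\Omega)}$; then \eqref{stime approx BV sgn}, together with $C_\varepsilon\to(\mathcal{S}_1(m+1)\|f\|_{L^{\widetilde{m}}(\Omega)})^{1/m}$ and $\varepsilon|\Omega|^{1-\frac{m}{(m+1)1^*}}C_\varepsilon^m\to0$, produces exactly the right-hand side of \eqref{stime BV sgn}. (Here $\|f\|_{L^{\widetilde{m}}(\Omega)}$ is finite because $\widetilde{m}<N$ and $L^{N,\infty}(\Omega)\subset L^{\widetilde{m}}(\Omega)$.)

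I do not foresee a real obstacle: all the quantitative work is already contained in Lemma~\ref{l.lemmaapprox sgn} and in the $BV$-toolbox of Section~\ref{s.prelim}. The only point that requires a little thought is the identification of $u$ from a sequence bounded merely in $BV$ through the power $|u_\varepsilon|^{m+1}$; this is precisely why passing to the limit on $\phi(u_\varepsilon)=|u_\varepsilon|^m u_\varepsilon$ --- rather than on $|u_\varepsilon|^{m+1}$, as in the nonnegative case --- is the natural move, and once $u$ is in hand the rest is a routine rewriting of Corollary~\ref{cor_ex} and Lemma~\ref{lemma bound}.
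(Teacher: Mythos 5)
Your proof is correct, but it takes a genuinely different route from the paper's. The paper does not pass to the limit on a single globally $BV$-bounded quantity: it tests the approximate equation with $T_a^{\infty}(u_\varepsilon)-a$ and $T_{-\infty}^{-a}(u_\varepsilon)+a$ to get, for each fixed $a>0$, a uniform $BV(\Omega)$ bound on the truncations $T_a^{\infty}(u_\varepsilon)$ and $T_{-\infty}^{-a}(u_\varepsilon)$, extracts limits $v_a$, $v_{-a}$ by a diagonal argument over a sequence of levels $a\to0$, and then reassembles $u$ by hand on the sets $E$ (where some $v_a>a$), $F$ (where some $v_{-a}<-a$) and their complement, where a contradiction argument shows $u_\varepsilon\to0$. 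Your observation that the odd power $\phi(u_\varepsilon)=|u_\varepsilon|^m u_\varepsilon$ has exactly the same $BV(\Omega)$ norm as $|u_\varepsilon|^{m+1}$ (same modulus, same vanishing trace, same pointwise gradient modulus $(m+1)|u_\varepsilon|^m|\nabla u_\varepsilon|$) lets you import the bound \eqref{stime approx BV sgn} directly, so a single compactness extraction plus continuity of $\phi^{-1}$ identifies $u$ in one step, exactly as in the nonnegative case; this is shorter and avoids the diagonal argument entirely. The trade-off is minor: the paper's method yields the $BV$ bounds on the truncations $T_a^{\infty}(u)$, $T_{-\infty}^{-a}(u)$ as a by-product (these are used later to conclude $u\in TBV(\Omega)$), whereas in your setup one recovers them afterwards by applying the chain rule (Theorem \ref{t chain rule}) to the Lipschitz maps $T_a^{\infty}\circ\phi^{-1}$ and $T_{-\infty}^{-a}\circ\phi^{-1}$ acting on $g=\phi(u)\in BV(\Omega)$ — worth stating explicitly, since the subsequent lemmas use $u\in TBV(\Omega)\cap L^\infty(\Omega)$. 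Also note that $\phi$ is only locally Lipschitz; your appeal to the uniform $L^\infty$ bound \eqref{e.Linftybound sgn} covers this, but it is the reason the chain-rule identities you use are legitimate.
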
 
	\begin{proof}
		We first establish almost everywhere convergence of $u_{\varepsilon}$. Let $a>0$ and take $\varphi= T_{a}^{\infty}(u_{\varepsilon})-a$ as a test function in \eqref{e.weaksol}. We get
		$$
		\int_{\{u_{\varepsilon}\geq a\}}|u_{\varepsilon}|^{m}\frac{|\nabla u_{\varepsilon}|^2}{|\nabla u_{\varepsilon}|_{\varepsilon}}\leq\int_{\Omega}f(T_{a}^{\infty}(u_{\varepsilon})-a).
		$$
		H\"older's inequality and Lemma \ref{l.lemmaapprox sgn} imply that the right-hand of the previous is bounded by a constant $C>0$. To handle  the left-hand, we use \eqref{dis norm} and the fact that the integral is on $\{u_{\varepsilon}\geq a\}$. Therefore one has
		$$
		\int_{\{u_{\varepsilon}\geq a\}}|\nabla u_{\varepsilon}|\leq\varepsilon|\Omega|+\frac{C}{a^{m}}.
		$$ 
		Consequently, for each $a>0$ the sequence $|\nabla T_{a}^{\infty}(u_{\varepsilon})|$ is uniformly bounded in   $L^1(\Omega)$ with respect to $\varepsilon$. On the other hand, Lemma \ref{l.lemmaapprox sgn} implies that $T_{a}^{\infty}(u_{\varepsilon})$ is uniformly bounded in $L^{\infty}(\Omega)$ (and so in $L^{1}(\Omega)$). Thus $T_{a}^{\infty}(u_{\varepsilon})$ is bounded in $BV(\Omega)$. The compactness of the embedding $BV(\Omega)\hookrightarrow L^{1}(\Omega)$ implies that there exists $v_{a}\in BV(\Omega)$ such that, up to a subsequence,
		\begin{equation}\label{equ zero}
			T_{a}^{\infty}(u_{\varepsilon})\to v_{a}\text{ in }L^{1}(\Omega)\quad\text{ and }\quad T_{a}^{\infty}(u_{\varepsilon})\to v_{a}\text{ a.e in }\Omega\text{ for all }a>0.
		\end{equation}
		So far we have considered the ``positive part'' of $u_{\varepsilon}$. To consider the negative one, we take as test function $\varphi= T_{-\infty}^{-a}(u_{\varepsilon})+a$ as test. Similarly, we get
		$$
		\int_{\{u_{\varepsilon}\leq -a\}}|u_{\varepsilon}|^{m}\frac{|\nabla u_{\varepsilon}|^2}{|\nabla u_{\varepsilon}|_{\varepsilon}}\leq\int_{\Omega}f|T_{-\infty}^{-a}(u_{\varepsilon})+a|.
		$$
		and thus, as before
		$$
		\int_{\{u_{\varepsilon}\leq -a\}}|\nabla u_{\varepsilon}|\leq\varepsilon|\Omega|+\frac{C}{a^{m}}.
		$$ 
		We conclude that the sequence $T_{-\infty}^{-a}(u_{\varepsilon})$ is bounded in $BV(\Omega)$ and thus that there exists $v_{-a}\in BV(\Omega)$ such that, up to a subsequence,
		\begin{equation}\label{equ uno}
			T_{-\infty}^{-a}(u_{\varepsilon})\to v_{-a}\text{ in }L^{1}(\Omega)\quad\text{ and }\quad T_{-\infty}^{-a}(u_{\varepsilon})\to v_{-a}\text{ a.e in }\Omega\text{ for all }a>0.
		\end{equation}
		We may use a diagonal argument to obtain a set $\widetilde{\Omega}\subset\Omega$ with $|\Omega\setminus\widetilde{\Omega}|=0$ and a sequence $\varepsilon_{j}\to0$  such that
		$$
		T_{a}^{\infty}(u_{\varepsilon_j}(x))\to v_{a}(x)\text{ for all }x\in\widetilde{\Omega}\text{ and }a>0,
		$$
		and
		$$
		T^{-a}_{-\infty}(u_{\varepsilon_j}(x))\to v_{-a}(x)\text{ for all }x\in\widetilde{\Omega}\text{ and }a>0.
		$$
		Now we define the following sets
		$$
		E=\{x\in\widetilde{\Omega}:\text{ there exists }a>0\text{ such that }v_{a}(x)>a\},
		$$
		and
		$$
		F=\{x\in\widetilde\Omega:\text{ there exists }a>0\text{ such that }v_{-a}(x)<-a\}.
		$$
		It is clear that $E\cap F=\emptyset$. Furthermore observe that if $v_{a_{0}}(x)>a_0$ for some $a_0>0$, then the uniqueness of the limit implies that $v_{a}(x)=v_{a_0}(x)$ for all $0<a\leq a_0$. On the other hand, the sequence $v_{a}$ is non-decreasing in $a$. Consequently, the function
		$$
		\overline{u}(x):=\lim_{a\to0}v_{a}(x), 
		$$
		is well defined for all $x\in\widetilde\Omega$.

		We claim that
		$
		u_{\varepsilon}\to \overline{u}\text{ pointwise in }E.
		$
		Indeed, for all $x\in E$, there exist $a_0>0$ and $\varepsilon_{x}>0$ such that $T_{a_0}^{\infty}(u_{\varepsilon}(x))>a_0$ for all $0<\varepsilon<\varepsilon_x$. Consequently, $T_{a_0}^{\infty}(u_{\varepsilon}(x))=u_{\varepsilon}(x)$ for all $0<\varepsilon<\varepsilon_{x}$. The claim then follows from \eqref{equ zero} and from the fact that $v_{a_0}(x)=\overline{u}(x)$. We now argue similarly for the set $F$. First of all, observe that if $v_{-a_{0}}(x)<-a_0$ for some $a_0>0$, then the uniqueness of the limit implies that $v_{-a}(x)=v_{-a_0}(x)$ for all $0<a\leq a_0$. On the other hand, the sequence $v_{-a}$ is non-decreasing in $a$. Consequently, the function
		$$
		\underline{u}(x):=\lim_{a\to0}v_{-a}(x),
		$$
		is well defined for all $x\in\widetilde\Omega$.

		We claim that
		$
		u_{\varepsilon}\to \underline{u}\text{ pointwise in }F.
		$
		Indeed, for all $x\in F$, there exist $a_0>0$ and $\varepsilon_{x}>0$ such that $T^{-a_0}_{-\infty}(u_{\varepsilon}(x))<-a_0$ for all $0<\varepsilon<\varepsilon_x$. Consequently, $T^{-a_0}_{-\infty}(u_{\varepsilon}(x))=u_{\varepsilon}(x)$ for all $0<\varepsilon<\varepsilon_{x}$. The claim then follows from \eqref{equ uno} and from the fact that $v_{-a_0}(x)=\underline{u}(x)$. We finish by showing that
		$$
		u_{\varepsilon}\to0\text{ a.e in }\widetilde\Omega\setminus (E\cup F).
		$$
		Indeed, assume by contradiction that $u$ does not converge to $0$ a.e in $\widetilde\Omega\setminus (E\cup F)$. Then, there exists a set $V\subset \widetilde\Omega\setminus (E\cup F)$ with positive measure such that $u_{\varepsilon}(x)$ does not converge to zero for all $x\in V$. That is, for each $x\in V$ there exists $\varepsilon_{0,x}$ and a sequence $\varepsilon_j\to0$ such that $|u_{\varepsilon_{j}}(x)|>\varepsilon_{0,x}$ for all $j\in\mathbb{N}$. But this implies that $x\in E\cup F$, which is a contradiction. We have thus shown that $u_{\varepsilon}$ converges a.e in $\Omega$ to the function defined by
		$$
		u:=\begin{cases}
			\overline{u}\text{ in } E,\\
			\underline{u}\text{ in }F,\\
			0\text { in }\Omega\setminus{E\cup F}.	
		\end{cases}	
		$$
		Estimates \eqref{e.boundexplLinfty sgn} and \eqref{stime BV sgn} are consequences of Lemma \ref{l.lemmaapprox sgn}. This proves the result.
	\end{proof}
	Now we show the existence of the limit  vector field $w$ (and so $z$).
	\begin{lemma}\label{l.weaksol sgn}
		Assume $m>0$, let $f \in L^{N,\infty}(\Omega)$ and let $u$ be the function given by Corollary \ref{lemma ae conv}. There exists a vector field $w \in L^\infty(\Omega)^N$ with $\|w\|_{L^\infty(\Omega)^N} \leq 1$,  such that $z:=|u|^m w\in \DM(\Omega)$ satisfies  \eqref{sol 1 segno}. Moreover if $f \not \equiv 0$, then $u \not \equiv 0$.
	\end{lemma}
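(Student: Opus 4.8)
The plan is to mimic, essentially verbatim, the proof of Lemma \ref{l.weaksol}; the only structural difference is that $u_\varepsilon^m$ is now replaced by $|u_\varepsilon|^m$, which is immaterial once one has the strong convergence $|u_\varepsilon|^m \to |u|^m$ in every $L^r(\Omega)$, $1\le r<\infty$. This convergence is furnished by Corollary \ref{lemma ae conv}: indeed $u_\varepsilon\to u$ almost everywhere in $\Omega$, the sequence $u_\varepsilon$ is uniformly bounded in $L^\infty(\Omega)$ by \eqref{e.Linftybound sgn}, and $\Omega$ is bounded, so the dominated convergence theorem applies.

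First I would introduce $w_\varepsilon:=\nabla u_\varepsilon/|\nabla u_\varepsilon|_\varepsilon$; since $\|w_\varepsilon\|_{L^\infty(\Omega)^N}\le1$, up to a subsequence $w_\varepsilon$ converges weakly-$*$ to some $w$ in $L^\infty(\Omega)^N$ with $\|w\|_{L^\infty(\Omega)^N}\le1$, and I would set $z:=|u|^m w$, which lies in $L^\infty(\Omega)^N$ because $u\in L^\infty(\Omega)$. To pass to the limit in the first term of \eqref{e.weaksol} I would use, for $\varphi\in C^1_c(\Omega)$, the splitting
$$|u_\varepsilon|^m w_\varepsilon\cdot\nabla\varphi-z\cdot\nabla\varphi=(|u_\varepsilon|^m-|u|^m)\,w_\varepsilon\cdot\nabla\varphi+|u|^m(w_\varepsilon-w)\cdot\nabla\varphi;$$
the first summand tends to $0$ in $L^1(\Omega)$ because $|u_\varepsilon|^m\to|u|^m$ strongly in $L^1(\Omega)$ and $|w_\varepsilon|\le1$, while the second tends to $0$ since $w_\varepsilon$ converges weakly-$*$ and $|u|^m\nabla\varphi\in L^1(\Omega)^N$. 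Hence $\int_\Omega|u_\varepsilon|^m w_\varepsilon\cdot\nabla\varphi\to\int_\Omega z\cdot\nabla\varphi$.

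Next I would handle the vanishing-viscosity term. Choosing $\varphi=u_\varepsilon$ in \eqref{e.weaksol}, discarding the nonnegative term $\int_\Omega|u_\varepsilon|^m|\nabla u_\varepsilon|^2/|\nabla u_\varepsilon|_\varepsilon$, and using Young's inequality in Lorentz spaces \eqref{holder lor} together with the uniform $L^\infty$ bound \eqref{e.Linftybound sgn} (which makes $u_\varepsilon$ bounded in every Lorentz space over the bounded set $\Omega$), one gets $\varepsilon\int_\Omega|\nabla u_\varepsilon|^2\le\int_\Omega f u_\varepsilon\le C\|f\|_{L^{N,\infty}(\Omega)}$ with $C$ independent of $\varepsilon$; then the Cauchy--Schwarz inequality yields
$$\Bigl|\varepsilon\int_\Omega\nabla u_\varepsilon\cdot\nabla\varphi\Bigr|\le\varepsilon^{\frac12}\Bigl(\varepsilon\int_\Omega|\nabla u_\varepsilon|^2\Bigr)^{\frac12}\Bigl(\int_\Omega|\nabla\varphi|^2\Bigr)^{\frac12}\le C^{\frac12}\,\varepsilon^{\frac12}\,\|f\|_{L^{N,\infty}(\Omega)}^{\frac12}\Bigl(\int_\Omega|\nabla\varphi|^2\Bigr)^{\frac12},$$
which vanishes as $\varepsilon\to0$ for every $\varphi\in C^1_c(\Omega)$. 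Passing to the limit in \eqref{e.weaksol} then gives $\int_\Omega z\cdot\nabla\varphi=\int_\Omega f\varphi$ for all $\varphi\in C^1_c(\Omega)$, i.e. $-\operatorname{div}z=f$ in $\mathcal{D}'(\Omega)$; since $f\in L^{N,\infty}(\Omega)\subset L^1(\Omega)$ this identity holds as measures, whence $z\in\DM(\Omega)$ and \eqref{sol 1 segno} follows. Finally, if $f\not\equiv0$ then $\operatorname{div}z\not\equiv0$, hence $z=|u|^m w\not\equiv0$, which forces $u\not\equiv0$. I do not expect a genuine obstacle here, since the argument is a transcription of Lemma \ref{l.weaksol}; the only point needing a word of care is that the strong $L^1$ convergence of $|u_\varepsilon|^m$ must be drawn from Corollary \ref{lemma ae conv}, which is valid for the sign-changing $u_\varepsilon$, rather than from the monotone-truncation reasoning used in the nonnegative case.
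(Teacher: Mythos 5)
Your proposal is correct and follows essentially the same route as the paper, which simply states that the proof is identical to that of Lemma \ref{l.weaksol}; you have correctly identified the only adaptation needed, namely that the strong $L^1$ convergence of $|u_\varepsilon|^m$ now comes from the a.e.\ convergence and uniform $L^\infty$ bound of Corollary \ref{lemma ae conv} via dominated convergence. No gaps.
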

	\begin{proof} 
		The proof is the same of Lemma \ref{l.weaksol}.
	\end{proof}
	The following lemmas highlight the meaning of the vector field $z$ and show that $u$ has no jump part.
	\begin{lemma}\label{l.totvar sgn}
		Assume $m>0$ and let $f \in L^{N,\infty}(\Omega)$. Let $u$ be the function given by Corollary \ref{lemma ae conv} and $z$ be the vector field defined in Lemma \ref{l.weaksol sgn}. We have 
		\begin{equation}\label{e.totvar +}
			\left(z,DT_{a}^{\infty}(u)\right)\geq \frac{1}{m+1}|DT_{a}^{\infty}(u)^{m+1}| \quad \text{as measures, for all $a>0$,}	
		\end{equation}
		and
		\begin{equation}\label{e.totvar -}
			\left(z,DT^{-a}_{-\infty}(u)\right)\geq \frac{1}{m+1}|D|T^{-a}_{-\infty}(u)|^{m+1}| \quad \text{as measures, for all $a>0$.}	
		\end{equation}
	\end{lemma}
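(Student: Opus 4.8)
The plan is to mimic the argument of Lemma \ref{l.totvar} essentially verbatim for the positive-truncation inequality \eqref{e.totvar +}, and then to obtain \eqref{e.totvar -} by exploiting the natural symmetry of the problem under $u \mapsto -u$. For \eqref{e.totvar +}: I would fix $0 \le \varphi \in C^1_c(\Omega)$ and test \eqref{e.weaksol} with $T_a^\infty(u_\varepsilon)\varphi \in H^1_0(\Omega)$ (this is an admissible test function since $u_\varepsilon \in H^1_0(\Omega) \cap L^\infty(\Omega)$). Expanding the gradient and discarding the nonnegative term $\varepsilon \int_\Omega |\nabla u_\varepsilon|^2 (T_a^\infty)'(u_\varepsilon)\varphi$ gives, exactly as in \eqref{e.proof2l.totvar},
$$\int_{\Omega}|u_{\varepsilon}|^{m}w_{\varepsilon}\cdot\nabla T_{a}^{\infty}(u_{\varepsilon})\,\varphi+\int_{\Omega}|u_{\varepsilon}|^{m}w_{\varepsilon}\cdot\nabla \varphi\, T_{a}^{\infty}(u_{\varepsilon})+\varepsilon\int_{\Omega}\nabla u_{\varepsilon}\cdot\nabla \varphi\, T_{a}^{\infty}(u_{\varepsilon})\leq\int_{\Omega}f T_{a}^{\infty}(u_{\varepsilon})\varphi.$$
The three limits are handled as before: the $\varepsilon$-term vanishes by the Cauchy–Schwarz estimate of \eqref{e.limzero} together with the uniform bound \eqref{e.Linftybound sgn}; the second integral converges to $\int_\Omega z \cdot \nabla\varphi\, T_a^\infty(u)$ by splitting $|u_\varepsilon|^m T_a^\infty(u_\varepsilon) w_\varepsilon - z \varphi' T_a^\infty(u)$ into a strongly $L^1$-convergent piece (using $|u_\varepsilon|^m T_a^\infty(u_\varepsilon) \to |u|^m T_a^\infty(u)$ in $L^1$, which follows from a.e.\ convergence plus uniform $L^\infty$ bounds and dominated convergence) times the bounded $w_\varepsilon$, and a piece handled by $w_\varepsilon \rightharpoonup w$ $*$-weakly; and the first integral is bounded below using \eqref{dis norm} to produce $\int_\Omega |u_\varepsilon|^m |\nabla u_\varepsilon| (T_a^\infty)'(u_\varepsilon)\varphi$ up to a vanishing $\varepsilon$-term.

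The only genuinely new point relative to Lemma \ref{l.totvar} is that $u$ now changes sign, so the coefficient is $|u_\varepsilon|^m$ rather than $u_\varepsilon^m$; but on $\{u_\varepsilon > a\}$ (where $(T_a^\infty)'(u_\varepsilon)=1$) one has $|u_\varepsilon|^m = u_\varepsilon^m = T_a^\infty(u_\varepsilon)^m$, so the chain-rule identity
$$\int_{\Omega}|u_{\varepsilon}|^{m}|\nabla u_{\varepsilon}|(T_{a}^{\infty})'(u_{\varepsilon})\varphi=\frac{1}{m+1}\int_{\Omega}|\nabla (T_{a}^{\infty}(u_{\varepsilon}))^{m+1}|\varphi$$
still holds verbatim. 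Combining these with the weak lower semicontinuity \eqref{sci BV0} applied to $(T_a^\infty(u_\varepsilon))^{m+1} \to (T_a^\infty(u))^{m+1}$ in $L^1$ (note $T_a^\infty(u_\varepsilon) \to T_a^\infty(u)$ a.e.\ and boundedly, and the limit lies in $BV$ by Corollary \ref{lemma ae conv}), one arrives at
$$\frac{1}{m+1}\int_{\Omega}|D( T_{a}^{\infty}(u)^{m+1})|\varphi+\int_{\Omega}z\cdot\nabla\varphi\, T_{a}^{\infty}(u)\leq\int_{\Omega}fT_{a}^{\infty}(u)\varphi,$$
and then \eqref{sol 1 segno} together with the definition \eqref{pairing} of the pairing converts the last two terms into $\int_\Omega (z, DT_a^\infty(u))\varphi$, yielding \eqref{e.totvar +} by arbitrariness of $\varphi \ge 0$.

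For \eqref{e.totvar -} I would observe that $\tilde u_\varepsilon := -u_\varepsilon$ solves the approximate problem \eqref{e.problemapprox} with datum $-f \in L^{N,\infty}(\Omega)$ and associated field $\tilde w_\varepsilon = -w_\varepsilon$, and that $T_{-\infty}^{-a}(u_\varepsilon) = -T_a^\infty(-u_\varepsilon) = -T_a^\infty(\tilde u_\varepsilon)$, while $|T_{-\infty}^{-a}(u)| = T_a^\infty(\tilde u)$ and $z = |u|^m w = |\tilde u|^m \tilde w \cdot(-1)$; running the computation above with $T_{-\infty}^{-a}(u_\varepsilon)\varphi = -T_a^\infty(\tilde u_\varepsilon)\varphi$ as test function (so that on $\{u_\varepsilon < -a\}$ again $|u_\varepsilon|^m = |T_{-\infty}^{-a}(u_\varepsilon)|^m$) produces, after the same passage to the limit, $\left(z, DT_{-\infty}^{-a}(u)\right)\geq \frac{1}{m+1}|D|T_{-\infty}^{-a}(u)|^{m+1}|$ as measures for all $a>0$. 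I do not expect a serious obstacle here; the one point deserving care is making sure the a.e.\ convergence and uniform $L^\infty$ bound furnished by Corollary \ref{lemma ae conv} are strong enough to pass to the limit separately on $T_a^\infty(u_\varepsilon)$ and on $T_{-\infty}^{-a}(u_\varepsilon)$ (which it is, since the diagonal subsequence there is constructed precisely so that both positive and negative truncations converge a.e.\ along it).
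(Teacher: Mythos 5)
Your proposal is correct and follows essentially the same route as the paper: \eqref{e.totvar +} is obtained exactly as in Lemma \ref{l.totvar} (testing with $T_a^\infty(u_\varepsilon)\varphi$, using \eqref{dis norm}, lower semicontinuity and the pairing identity via \eqref{sol 1 segno}), and \eqref{e.totvar -} is obtained by testing with $T_{-\infty}^{-a}(u_\varepsilon)\varphi$, which is precisely the paper's choice — your $u\mapsto -u$ symmetry framing is only a repackaging of that computation. The points you flag as needing care (the coefficient $|u_\varepsilon|^m$ coinciding with the truncation on the relevant level set, and the a.e.\ convergence of both truncations along the diagonal subsequence of Corollary \ref{lemma ae conv}) are exactly the ones the paper relies on implicitly.
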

	\begin{proof} 
		The proof of \eqref{e.totvar +} is the same of \eqref{e.totvar}. Let us show \eqref{e.totvar -}. We take $\varphi\in C_{c}^{1}(\Omega)$ with $\varphi\geq0$ and we choose $T_{-\infty}^{-a}(u_{\varepsilon})\varphi$ as a test function in \eqref{e.weaksol}. We get
		$$
		\int_{\Omega} |u_{\varepsilon}|^{m}w_{\epsilon}\cdot\nabla T_{-\infty}^{-a}(u_{\varepsilon})\,\varphi+\int_{\Omega}|u_{\varepsilon}|^{m}w_{\varepsilon}\cdot\nabla \varphi \,T_{-\infty}^{-a}(u_{\varepsilon})+\varepsilon\int_{\Omega}\nabla u_{\varepsilon}\cdot\nabla (T_{-\infty}^{-a}(u_{\varepsilon})\varphi)=\int_{\Omega}f T_{-\infty}^{-a}(u_{\varepsilon})\varphi.
		$$
		Arguing as in the proof of Lemma \ref{l.totvar}, we get
		$$
		\frac{1}{m+1}\int_{\Omega}|D |T_{-\infty}^{-a}(u)|^{m+1}|\varphi+\int_{\Omega}z\cdot\nabla\varphi\, T_{-\infty}^{-a}(u)\leq\int_{\Omega}fT_{-\infty}^{-a}(u)\varphi.
		$$
		Through the definition of pairing, using \eqref{sol 1 segno} (recall \eqref{def pair div l1}), we gain
		$$\frac{1}{m+1}\int_{\Omega}|D |T^{-a}_{-\infty}(u)|^{m+1}| \varphi \le \int_{\Omega}(z, DT^{-a}_{-\infty}(u)) \varphi.$$
		This proves the result.
	\end{proof}
	We want to conclude that equality holds in \eqref{e.totvar +} and \eqref{e.totvar -}. To do that, we first prove that $u$ does not jump.
	\begin{lemma}\label{u reg sgn}
		Assume $m>0$ and let $f \in L^{N,\infty}(\Omega)$. Let $u$ be the function given by Corollary \ref{lemma ae conv}. Then $u \in DTBV(\Omega) \cap L^\infty(\Omega)$.
	\end{lemma}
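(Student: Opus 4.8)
The plan is to reduce the statement to the corresponding fact for the positive and negative parts of $u$ and then to invoke, with only cosmetic modifications, the machinery developed for Lemma \ref{u reg}. By definition, $u\in DTBV(\Omega)$ means precisely $u^+,u^-\in DTBV^+(\Omega)$, and since Corollary \ref{lemma ae conv} already gives $u\in TBV(\Omega)\cap L^\infty(\Omega)$, Lemma \ref{salti in TBV} shows it suffices to prove that $\mathcal{H}^{N-1}(S^*_{u^+})=\mathcal{H}^{N-1}(S^*_{u^-})=0$, i.e. $\mathcal{H}^{N-1}(J_{T^\infty_a(u)})=\mathcal{H}^{N-1}(J_{T^{-a}_{-\infty}(u)})=0$ for almost every $a>0$. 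For fixed $a>0$ the truncations $T^\infty_a(u)$, $T^{-a}_{-\infty}(u)$ and their $(m+1)$-powers are genuine $BV(\Omega)\cap L^\infty(\Omega)$ functions, so the whole pairing and normal-trace apparatus of Section \ref{s.prelim} applies to them directly.

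For the positive part I would run the four-step argument of Lemma \ref{u reg} verbatim, replacing the field $z=u^m w$ there by the field $z=|u|^m w$ of Lemma \ref{l.weaksol sgn} and the inequality \eqref{e.totvar} by \eqref{e.totvar +} of Lemma \ref{l.totvar sgn}. Since $-\operatorname{div}z=f\in L^{N,\infty}(\Omega)$ is absolutely continuous with respect to the Lebesgue measure, its restriction to the $\mathcal{H}^{N-1}$-rectifiable set $J_{T^\infty_a(u)}$ vanishes, so by Lemma \ref{lem sigma} the one-sided normal traces $[z,\nu_{T^\infty_a(u)}]^\pm$ coincide $\mathcal{H}^{N-1}$-a.e. on $J_{T^\infty_a(u)}$. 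Using that $|u|^m=T^\infty_a(u)^m$ on $\{u>a\}$, the inclusion $w\chi_{\{u>a\}}=|u|^{-m}z\chi_{\{u>a\}}\in\DM(\Omega)$ for a.e. $a$ (because $|u|^{-m}\chi_{\{u>a\}}\in BV(\Omega)\cap L^\infty(\Omega)$), Lemma \ref{lem normale}, the chain rule \eqref{chain rule senza j} and \eqref{= misure} exactly as in Step 4 of Lemma \ref{u reg}, one is led to
$$\frac{1}{m+1}\left|\left(T^\infty_a(u)^{m+1}\right)^+-\left(T^\infty_a(u)^{m+1}\right)^-\right|\le \left|T^\infty_a(u)^+-T^\infty_a(u)^-\right|\min\left\{\left(T^\infty_a(u)^m\right)^+,\left(T^\infty_a(u)^m\right)^-\right\}$$
on $J_{T^\infty_a(u)}$, and the strict monotonicity of $s\mapsto s^m$ forces $\mathcal{H}^{N-1}(J_{T^\infty_a(u)})=0$ for a.e. $a>0$, hence $u^+\in DTBV^+(\Omega)$.

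For the negative part the plan is identical, now working with the truncations $T^{-a}_{-\infty}(u)$ (so that $|T^{-a}_{-\infty}(u)|$ plays the role formerly played by $T^\infty_a(u^-)$), using inequality \eqref{e.totvar -} instead of \eqref{e.totvar +}, the identity $|u|^m=|T^{-a}_{-\infty}(u)|^m$ on $\{u<-a\}$, and $w\chi_{\{u<-a\}}=|u|^{-m}z\chi_{\{u<-a\}}\in\DM(\Omega)$ for a.e. $a>0$; the analogues of Lemmas \ref{Tu z parring}, \ref{lem sigma} and \ref{lem normale} for the set $\{u<-a\}$ follow by localization. The same strict-monotonicity argument then gives $\mathcal{H}^{N-1}(J_{T^{-a}_{-\infty}(u)})=0$ for a.e. $a$, so $u^-\in DTBV^+(\Omega)$; combining the two halves with $u\in TBV(\Omega)\cap L^\infty(\Omega)$ yields $u\in DTBV(\Omega)\cap L^\infty(\Omega)$. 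The main obstacle is not analytic but bookkeeping: because the global function $u$ only belongs to $TBV(\Omega)$, every pairing identity must be applied to truncations, and one must carefully track which power of $|u|$ coincides with a power of which truncation on a given superlevel or sublevel set — this is exactly where the sign of $u$ enters. Once it is checked that $z$ restricted to $\{u>a\}$ equals $T^\infty_a(u)^m w$ and $z$ restricted to $\{u<-a\}$ equals $|T^{-a}_{-\infty}(u)|^m w$, with the relevant characteristic functions in $BV(\Omega)\cap L^\infty(\Omega)$ for a.e. $a$, no new difficulty beyond those already handled in Lemma \ref{u reg} arises.
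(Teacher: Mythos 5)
Your proposal is correct and follows essentially the same route as the paper: the positive part is handled exactly as in Lemma \ref{u reg} (with $z=|u|^m w$ and \eqref{e.totvar +}), and the negative part repeats the same computations on the sublevel sets $\{u<-a\}$ using $w\chi_{\{u<-a\}}=|u|^{-m}z\chi_{\{u<-a\}}\in\DM(\Omega)$, $\operatorname{div}z\in L^{N,\infty}(\Omega)$, Lemma \ref{lem sigma} and \eqref{e.totvar -}, concluding via the strict monotonicity of $s\mapsto s^m$ and Lemma \ref{salti in TBV}.
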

	\begin{proof}
		The proof that $u^+ \in DTBV^+(\Omega)$ is as in Lemma \ref{u reg}. Let us  show that  $u^- \in DTBV^+(\Omega)$. As by Lemma \ref{l.weaksol sgn}  $z\in \DM(\Omega)$,  we first  claim  that $w\chi_{\{u<-a\}}\in \DM(\Omega)$ for a.e $a>0$; this follows by Lemma \ref{campo prodotto} as 
		$$
		w\chi_{\{u<-a\}}=|u|^{-m}z\chi_{\{u<-a\}},
		$$
		and $|u|^{-m}\chi_{\{u<-a\}}\in BV(\Omega)\cap L^{\infty}(\Omega)$, since $u\in TBV(\Omega)\cap L^{\infty}(\Omega)$. 
		In Lemma \ref{l.weaksol sgn} we showed that  equation \eqref{sol 1 segno} holds. This, in turn, implies that $\operatorname{div}z \in L^{N,\infty}(\Omega)$.
		Moreover, by \cite[Proposition 3.69]{AFP} we know that $J_{T^{-a}_{-\infty}(u)}=J_{|T^{-a}_{-\infty}(u)|^{m+1}}$ and $\nu_{T^{-a}_{-\infty}(u)}=\nu_{|T^{-a}_{-\infty}(u)|^m}$ on $J_{T^{-a}_{-\infty}(u)}$ for a.e. $a>0$ since $m>0$.
		\\Thanks to Lemma \ref{lem sigma} and the fact that  $\operatorname{div}z \in L^{N,\infty}(\Omega)$, we may repeat the computations in the proof of Lemma \ref{u reg} and conclude that
		$$0=\mathcal{H}^{N-1}\left(J_{T^{-a}_{-\infty}(u)}\right)=\mathcal{H}^{N-1}\left(S_{T^{-a}_{-\infty}(u)}\right) \quad \text{for a.e. $a>0$.}$$
		This concludes the proof.
	\end{proof}
	Now we show that inequalities  \eqref{e.totvar +} and \eqref{e.totvar -}  obtained in Lemma \ref{l.totvar sgn} are actually  equalities.
	\begin{corollary}\label{c.corototvar sgn}
		Assume $m>0$ and let $f \in L^{N,\infty}(\Omega)$. Let $u$ be the function given by Corollary \ref{lemma ae conv} and $z$ be the vector field defined in Lemma \ref{l.weaksol sgn}. Then   \eqref{sol 2 +} and \eqref{sol 2 -} hold.
	\end{corollary}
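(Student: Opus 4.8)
The plan is to split the statement into the two truncation identities and to treat each by the scheme already used for nonnegative data in Lemma \ref{c.corototvar}. For the ``positive'' identity \eqref{sol 2 +} there is essentially nothing new: by Lemma \ref{u reg sgn} we have $u^{+}\in DTBV^{+}(\Omega)\cap L^{\infty}(\Omega)$, and since $DT^{\infty}_{a}(u)=DT^{\infty}_{a}(u^{+})$ while $z\chi_{\{u>a\}}=T^{\infty}_{a}(u)^{m}\,w\chi_{\{u>a\}}$ (because $|u|=u=T^{\infty}_{a}(u)$ on $\{u>a\}$), the chain
\[
\frac{1}{m+1}\bigl|DT^{\infty}_{a}(u)^{m+1}\bigr|\stackrel{\eqref{e.totvar +}}{\le}\bigl(z,DT^{\infty}_{a}(u)\bigr)\stackrel{\eqref{Tu z parring eq}}{=}\bigl(z\chi_{\{u>a\}},DT^{\infty}_{a}(u)\bigr)\stackrel{\eqref{uscire sx}}{=}T^{\infty}_{a}(u)^{m}\bigl(w\chi_{\{u>a\}},DT^{\infty}_{a}(u)\bigr),
\]
together with \eqref{ec:2} and the chain rule \eqref{chain rule senza j}, closes exactly as in Lemma \ref{c.corototvar}, giving \eqref{sol 2 +} for a.e. $a>0$.

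The ``negative'' identity \eqref{sol 2 -} is reduced to the same situation by passing to $u^{-}$. First I would record the elementary identity $T^{-a}_{-\infty}(u)=-T^{\infty}_{a}(u^{-})$, so that $DT^{-a}_{-\infty}(u)=-DT^{\infty}_{a}(u^{-})$ and $|T^{-a}_{-\infty}(u)|^{m+1}=T^{\infty}_{a}(u^{-})^{m+1}$; in particular \eqref{e.totvar -} already provides the inequality $\bigl(z,DT^{-a}_{-\infty}(u)\bigr)\ge\frac{1}{m+1}\bigl|D|T^{-a}_{-\infty}(u)|^{m+1}\bigr|$. Next, using that $u^{-}\in DTBV^{+}(\Omega)\cap L^{\infty}(\Omega)$ (Lemma \ref{u reg sgn}) and $z\in\DM(\Omega)$ (Lemma \ref{l.weaksol sgn}), I would apply Lemma \ref{Tu z parring} with $u^{-}$ in place of $u$ and $\alpha=1$ to get $z\chi_{\{u<-a\}}=z\chi_{\{u^{-}>a\}}\in\DM(\Omega)$ and $\bigl(z,DT^{\infty}_{a}(u^{-})\bigr)=\bigl(z\chi_{\{u<-a\}},DT^{\infty}_{a}(u^{-})\bigr)$ for a.e. $a>0$. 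Since $|u|=u^{-}=T^{\infty}_{a}(u^{-})$ on $\{u<-a\}$, we have $z\chi_{\{u<-a\}}=T^{\infty}_{a}(u^{-})^{m}\,w\chi_{\{u<-a\}}$, and (recalling that $w\chi_{\{u<-a\}}\in\DM(\Omega)$ was established in Lemma \ref{u reg sgn}) an application of \eqref{uscire sx}, then \eqref{ec:2}, then the chain rule \eqref{chain rule senza j}, yields the absolute-value bound
\[
\bigl|\bigl(z,DT^{-a}_{-\infty}(u)\bigr)\bigr|=T^{\infty}_{a}(u^{-})^{m}\bigl|\bigl(w\chi_{\{u<-a\}},DT^{\infty}_{a}(u^{-})\bigr)\bigr|\le T^{\infty}_{a}(u^{-})^{m}\bigl|DT^{\infty}_{a}(u^{-})\bigr|=\frac{1}{m+1}\bigl|D|T^{-a}_{-\infty}(u)|^{m+1}\bigr|
\]
as measures, for a.e. $a>0$. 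Combining this with the inequality coming from \eqref{e.totvar -} gives the sandwich
\[
\bigl(z,DT^{-a}_{-\infty}(u)\bigr)\ge\frac{1}{m+1}\bigl|D|T^{-a}_{-\infty}(u)|^{m+1}\bigr|\ge\bigl|\bigl(z,DT^{-a}_{-\infty}(u)\bigr)\bigr|\ge\bigl(z,DT^{-a}_{-\infty}(u)\bigr),
\]
which forces equality everywhere and proves \eqref{sol 2 -}.

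I do not expect a genuine obstacle here: the only delicate point is purely bookkeeping, namely that Lemmas \ref{Tu z parring} and \ref{l.DBV} are stated for nonnegative functions in $DTBV^{+}(\Omega)$, resp.\ $DBV(\Omega)$, so the negative truncation must first be rewritten in terms of $u^{-}$ and the resulting sign in $T^{-a}_{-\infty}(u)=-T^{\infty}_{a}(u^{-})$ carried along throughout the computation. Once the membership facts $u^{\pm}\in DTBV^{+}(\Omega)\cap L^{\infty}(\Omega)$ and $w\chi_{\{u<-a\}}\in\DM(\Omega)$ from Lemma \ref{u reg sgn} are in place, both \eqref{sol 2 +} and \eqref{sol 2 -} follow by the argument of Lemma \ref{c.corototvar} essentially verbatim.
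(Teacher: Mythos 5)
Your proposal is correct and follows essentially the same route as the paper: the identity \eqref{sol 2 -} is obtained from the circular chain \eqref{e.totvar -} $\to$ \eqref{Tu z parring eq} $\to$ \eqref{uscire sx} $\to$ \eqref{ec:2} $\to$ \eqref{chain rule senza j}, which forces equality throughout. Your explicit rewriting in terms of $u^{-}$ via $T^{-a}_{-\infty}(u)=-T^{\infty}_{a}(u^{-})$ and the absolute-value sandwich are just a more careful bookkeeping of what the paper does implicitly.
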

	\begin{proof} It is a straightforward application of \cite[Lemma 5.10]{GMP}, but for the sake of completeness, we present the details. The proof of \eqref{sol 2 +} is almost identical to the one of Lemma \ref{c.corototvar}.
		
		\smallskip 
		We focus on  \eqref{sol 2 -}.  
		It holds  
		\begin{align*}
			\frac{1}{m+1}|D|T^{-a}_{-\infty}(u)|^{m+1}| \stackrel{\eqref{e.totvar -}}{\le} & \left(z, DT^{-a}_{-\infty}(u)\right) \stackrel{\eqref{Tu z parring eq}}{=} \left(z \chi_{\{u<-a\}}, DT^{-a}_{-\infty}(u)\right) \\  \stackrel{\eqref{uscire sx}}{=} & |T^{-a}_{-\infty}(u)|^m \left(w \chi_{\{u<-a\}}, DT^{-a}_{-\infty}(u)\right) \\ \stackrel{\eqref{ec:2}}{\leq} & |T^{-a}_{-\infty}(u)|^m   |DT^{-a}_{-\infty}(u)| 
			\\   \stackrel{\eqref{chain rule senza j}}{=}  &   \frac{1}{m+1}|D|T^{-a}_{-\infty}(u)|^{m+1}|, 
		\end{align*} 
		where in  the last equality, we used  that  $u\in DTBV(\Omega)$.
		This proves  \eqref{sol 2 -} and the proof is complete. 
	\end{proof}
	
	\begin{remark} 
		As before, we deduce that \eqref{sol 2 +} and \eqref{sol 2 -} can be equivalently recast as 
		$$
		\left(w \chi_{\{u>a\}}, DT^\infty_a(u)\right)=\left|D T^\infty_a(u)\right|  \quad \text{in $\mathcal{D}'(\Omega)$, for a.e. $a>0$,}
		$$
		and
		$$
		\left(w \chi_{\{u<-a\}}, DT^{-a}_{-\infty}(u)\right)=\left|D |T^{-a}_{-\infty}(u)|\right|  \quad \text{in $\mathcal{D}'(\Omega)$, for a.e. $a>0$,}
		$$
		in Definition \ref{def sol changing}. 
		
	\end{remark}
	
	Now we study the behavior of the solution $u$ on the boundary $\partial \Omega$. We start with the extension of   Lemma \ref{lemma dis al bordo p} to this case. 
	\begin{lemma}\label{lemma dis al bordo con segno}
		Assume $m>0$ and let $f \in L^{N,\infty}(\Omega)$. Let $u$ be the function given by Corollary \ref{lemma ae conv} and $z$ be the vector field defined in Lemma \ref{l.weaksol sgn}. Then, for every $q>0$   \begin{equation}\label{dis bordo +}
			\left|\frac{T^\infty_a(u)^{m(q+1)}}{q+1} - \frac{a^{m(q+1)}}{q+1}\right| \le \left(\frac{a^{mq}}{q} - \frac{T^\infty_a(u)^{mq}}{q}\right) \left[z, \nu\right],\end{equation} \text{for a.e.  $a>0$,\ \text{and $\mathcal{H}^{N-1}$-a.e. on $\partial\Omega \cap \{u>0\}$.}} In particular, $\left[z, \nu\right]\leq 0$ $\mathcal{H}^{N-1}$-a.e. on $\partial\Omega \cap \{u>0\}$, 
		and 
		\begin{equation}\label{dis bordo -}
			\left|\frac{(-T^{-a}_{-\infty}(u))^{m(q+1)}}{q+1} - \frac{a^{m(q+1)}}{q+1}\right| \le \left(\frac{(-T^{-a}_{-\infty}(u))^{mq}}{q} - \frac{a^{mq}}{q}\right) \left[z, \nu\right], 	\end{equation} 
		\text{for a.e.  $a>0$,\ \text{and $\mathcal{H}^{N-1}$-a.e. on $\partial\Omega \cap \{u<0\}$.}} In particular, $\left[z, \nu\right]\geq 0$ $\mathcal{H}^{N-1}$-a.e. on $\partial\Omega \cap \{u<0\}$.
		
	\end{lemma}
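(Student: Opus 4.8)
The plan is to obtain both inequalities \eqref{dis bordo +} and \eqref{dis bordo -} by running the scheme of Lemma \ref{lemma dis al bordo p} twice, once on the positive truncation of $u$ and once on the negative one. For \eqref{dis bordo +} nothing genuinely new occurs: on $\partial\Omega\cap\{u>0\}$ one has $|u|=u^{+}$, so I would take $\left(\tfrac{T^\infty_a(u_\varepsilon)^{mq}}{q}-\tfrac{a^{mq}}{q}\right)\varphi$, with $0\le\varphi\in C^1(\overline\Omega)$, as a test function in \eqref{e.weaksol} and repeat verbatim the limit passage of Lemma \ref{lemma dis al bordo p}, which relies on Corollary \ref{lemma ae conv}, on the $*$-weak convergence $|u_\varepsilon|^m w_\varepsilon\rightharpoonup z$ from Lemma \ref{l.weaksol sgn}, on \eqref{dis norm}, on the lower semicontinuity \eqref{sci BV}, on Remark \ref{ext ft}, and on the identification $\theta(w\chi_{\{u>a\}},DT^\infty_a(u),x)=1$ provided by \eqref{sol 2 +} (Corollary \ref{c.corototvar sgn}). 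This yields \eqref{dis bordo +}, hence $[z,\nu]\le0$ $\mathcal{H}^{N-1}$-a.e. on $\partial\Omega\cap\{u>0\}$.

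For \eqref{dis bordo -} I would mirror the argument with the negative truncation. Writing $G_a(s):=\tfrac{(-T^{-a}_{-\infty}(s))^{mq}}{q}-\tfrac{a^{mq}}{q}$, note that $-T^{-a}_{-\infty}(s)=\max(-s,a)\ge a$, so $G_a$ is Lipschitz, vanishes for $s\ge-a$, and $G_a(0)=0$; hence $G_a(u_\varepsilon)\varphi\in H^1_0(\Omega)\cap L^\infty(\Omega)$ is admissible in \eqref{e.weaksol} for every $0\le\varphi\in C^1(\overline\Omega)$. Plugging it in, expanding $\nabla(G_a(u_\varepsilon)\varphi)$, and using $\nabla T^{-a}_{-\infty}(u_\varepsilon)=\chi_{\{u_\varepsilon<-a\}}\nabla u_\varepsilon$ together with $|u_\varepsilon|=-T^{-a}_{-\infty}(u_\varepsilon)$ on $\{u_\varepsilon<-a\}$, the leading term equals $-m\int_\Omega(-T^{-a}_{-\infty}(u_\varepsilon))^{m(q+1)-1}\chi_{\{u_\varepsilon<-a\}}\tfrac{|\nabla u_\varepsilon|^2}{|\nabla u_\varepsilon|_\varepsilon}\varphi$, which by \eqref{dis norm} is at most $-\tfrac1{q+1}\int_\Omega|\nabla(-T^{-a}_{-\infty}(u_\varepsilon))^{m(q+1)}|\varphi$ plus a vanishing $\varepsilon$-remainder; the genuine viscous contribution $\varepsilon\int_\Omega\nabla u_\varepsilon\cdot\nabla G_a(u_\varepsilon)\,\varphi$ is nonpositive and is discarded, while $\varepsilon\int_\Omega\nabla u_\varepsilon\cdot\nabla\varphi\,G_a(u_\varepsilon)\to0$ as in \eqref{e.limzero}. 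Passing to the $\liminf$ as $\varepsilon\to0$, using the convergences of Corollary \ref{lemma ae conv} (so that $G_a(u_\varepsilon)\to G_a(u)$ and $|u_\varepsilon|^m G_a(u_\varepsilon)\to|u|^m G_a(u)$ in $L^1(\Omega)$), the $*$-weak convergence of $|u_\varepsilon|^m w_\varepsilon$, and \eqref{sci BV} applied to $(-T^{-a}_{-\infty}(u_\varepsilon))^{m(q+1)}-a^{m(q+1)}$ (whose $BV$-traces vanish), I expect to reach, with $P_a(s):=\tfrac{(-T^{-a}_{-\infty}(s))^{m(q+1)}}{q+1}-\tfrac{a^{m(q+1)}}{q+1}$,
\begin{equation*}
\int_\Omega|DP_a(u)|\,\varphi+\int_{\partial\Omega}|P_a(u)|\,\varphi\,d\mathcal{H}^{N-1}\le\int_\Omega z\cdot\nabla\varphi\,G_a(u)-\int_\Omega fG_a(u)\varphi .
\end{equation*}

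Finally I would process the right-hand side exactly as in \eqref{felicita}: Remark \ref{ext ft} and \eqref{uscire prod} give $\int_\Omega fG_a(u)\varphi=\int_\Omega G_a(u)\,z\cdot\nabla\varphi+\int_\Omega\varphi\,(z,DG_a(u))-\int_{\partial\Omega}G_a(u)\varphi[z,\nu]\,d\mathcal{H}^{N-1}$, the first term cancelling $\int_\Omega z\cdot\nabla\varphi\,G_a(u)$; and since $u\in DTBV(\Omega)$ by Lemma \ref{u reg sgn}, so that $w\chi_{\{u<-a\}}\in\DM(\Omega)$ for a.e.\ $a>0$, the negative-part analogues of \eqref{Tu z parring eq}, \eqref{uscire sx} and \eqref{chain rule pairing}, combined with the negative-part version of the identity in Remark \ref{c.corototvarr} (which follows from \eqref{sol 2 -} in Corollary \ref{c.corototvar sgn}), should give $(z,DG_a(u))=-|DP_a(u)|$ as measures, the extra sign being produced by passing from $T^{-a}_{-\infty}(u)$ to $|T^{-a}_{-\infty}(u)|=-T^{-a}_{-\infty}(u)$. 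Substituting, the interior total-variation terms cancel and one is left with $\int_{\partial\Omega}|P_a(u)|\,\varphi\,d\mathcal{H}^{N-1}\le\int_{\partial\Omega}G_a(u)\varphi[z,\nu]\,d\mathcal{H}^{N-1}$, which by arbitrariness of $\varphi\ge0$ is precisely \eqref{dis bordo -}; since its left-hand side is nonnegative while $G_a(u)\ge0$ on $\partial\Omega$ — with $G_a(u)>0$ on $\partial\Omega\cap\{u<0\}$ for $a$ small — we conclude $[z,\nu]\ge0$ $\mathcal{H}^{N-1}$-a.e. on $\partial\Omega\cap\{u<0\}$ (on the complementary part of $\partial\Omega$ both sides vanish and the inequality is trivial).

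The step I expect to be the main obstacle is the sign bookkeeping behind $(z,DG_a(u))=-|DP_a(u)|$: this relation carries the opposite sign with respect to its positive-part counterpart in \eqref{felicita}, and it is precisely that sign which forces the interior total-variation terms to cancel rather than to add. Getting it right requires the a priori knowledge that $u\in DTBV(\Omega)$ — so that $u^-\in DTBV^+(\Omega)$ and all the $DBV$-pairing lemmas of Section \ref{s.prelim} genuinely apply to $T^{-a}_{-\infty}(u)$ and to $w\chi_{\{u<-a\}}$ — and the negative-part refinement of Corollary \ref{c.corototvar sgn} identifying the relevant Radon--Nikod\'ym derivative. Everything else is a routine transcription of the nonnegative case treated in Lemma \ref{lemma dis al bordo p}.
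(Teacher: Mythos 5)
Your proposal is correct and follows essentially the same route as the paper, which simply tests \eqref{e.weaksol} with $\left(\frac{a^{mq}}{q}-\frac{(-T^{-a}_{-\infty}(u_\varepsilon))^{mq}}{q}\right)\varphi$ and repeats the reasoning of Lemma \ref{lemma dis al bordo p}; your test function is the negative of the paper's, which is immaterial since the weak formulation is linear in the test function, and your sign bookkeeping (in particular the identity $(z,DG_a(u))=-|DP_a(u)|$, forced by the negative-part analogue of Remark \ref{c.corototvarr} and the monotone chain rule \eqref{chain rule pairing}) is carried out correctly.
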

	\begin{proof}
		The proof of \eqref{dis bordo +} is the same of \eqref{dis per il bordo}. For \eqref{dis bordo -},	we choose $\left(\frac{a^{mq}}{q} - \frac{(-T^{-a}_{-\infty}(u_\varepsilon))^{mq}}{q}\right)\varphi$ with $0\le\varphi \in C^{1}(\Omega)$ as test function in \eqref{e.weaksol}. The result is then obtained by the same reasoning described in the proof of Lemma \ref{lemma dis al bordo p}.
	\end{proof}
	\begin{lemma}\label{l.boundary con segno}
		Assume $m>0$ and let $f \in L^{N,\infty}(\Omega)$. Let $u$ be the function given by Corollary \ref{lemma ae conv} and $z$ be the vector field defined in Lemma \ref{l.weaksol sgn}. Then, \eqref{sol 3 +} and \eqref{sol 3 -} hold.
	\end{lemma}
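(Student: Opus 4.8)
The plan is to run the argument of Lemma~\ref{l.boundary} twice, once on $\partial\Omega\cap\{u>0\}$ using \eqref{dis bordo +} and once on $\partial\Omega\cap\{u<0\}$ using \eqref{dis bordo -}, both inequalities being provided by Lemma~\ref{lemma dis al bordo con segno}.

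On $\partial\Omega\cap\{u>0\}$, where $[z,\nu]\le 0$ by Lemma~\ref{lemma dis al bordo con segno}, I would proceed exactly as in Lemma~\ref{l.boundary}: rewriting $(a^{mq}-T^\infty_a(u)^{mq})[z,\nu]$ as $(T^\infty_a(u)^m a^{mq}-T^\infty_a(u)^{m(q+1)})T^\infty_a(u)^{-m}[z,\nu]$ and using $T^\infty_a(u)\ge a$ gives $\tfrac{q}{q+1}\le -T^\infty_a(u)^{-m}[z,\nu]$ $\mathcal{H}^{N-1}$-a.e.\ there; letting $a\to 0^+$, so that $(T^\infty_a(u))^\Omega\to(u^+)^\Omega$ by Lemma~\ref{lemma trace pos}, and then $q\to\infty$ yields $((u^+)^\Omega)^m\le -[z,\nu]$. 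For the opposite inequality I would use that $\chi_{\{u>a\}}\in BV(\Omega)$ and $w\chi_{\{u>a\}}\in\DM(\Omega)$ for a.e.\ $a>0$ (as in the proof of Lemma~\ref{u reg}), that on $\{u>a\}$ one has $|u|=u=T^\infty_a(u)$ so $z\chi_{\{u>a\}}=T^\infty_a(u)^m\,w\chi_{\{u>a\}}$, and that $\|w\|_{L^\infty(\Omega)^N}\le 1$; then \eqref{= al bordo mis} together with \eqref{riferz} gives $\big|[z,\nu]\chi_{\{u>a\}}\big|\le T^\infty_a(u)^m$ $\mathcal{H}^{N-1}$-a.e.\ on $\partial\Omega$, and since $\chi_{\{u>a\}}$ has trace $1$ on $\{u>0\}\cap\partial\Omega$ for $a$ small, letting $a\to 0^+$ gives $|[z,\nu]|\le ((u^+)^\Omega)^m$. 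Combining the two bounds yields \eqref{sol 3 +}.

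On $\partial\Omega\cap\{u<0\}$, where now $[z,\nu]\ge 0$, the argument is symmetric with $-T^{-a}_{-\infty}(u)=T^\infty_a(u^-)\ge a$ playing the role of $T^\infty_a(u)$: from \eqref{dis bordo -} one gets $\tfrac{q}{q+1}\le (-T^{-a}_{-\infty}(u))^{-m}[z,\nu]$, hence $((u^-)^\Omega)^m\le [z,\nu]$ after letting $a\to 0^+$ and $q\to\infty$. For the reverse bound I would invoke Lemma~\ref{u reg sgn}, which gives $u^-\in DTBV^+(\Omega)$ and hence $\chi_{\{u<-a\}}\in BV(\Omega)$, $w\chi_{\{u<-a\}}\in\DM(\Omega)$ for a.e.\ $a>0$; since $|u|=u^-=T^\infty_a(u^-)$ on $\{u<-a\}$ we have $z\chi_{\{u<-a\}}=T^\infty_a(u^-)^m\,w\chi_{\{u<-a\}}$, so \eqref{= al bordo mis}, \eqref{riferz} and $\|w\|_{L^\infty(\Omega)^N}\le 1$ give $\big|[z,\nu]\chi_{\{u<-a\}}\big|\le T^\infty_a(u^-)^m$ $\mathcal{H}^{N-1}$-a.e.\ on $\partial\Omega$, and letting $a\to 0^+$ (using that $\chi_{\{u<-a\}}$ has trace $1$ on $\{u<0\}\cap\partial\Omega$ for small $a$) yields $|[z,\nu]|\le ((u^-)^\Omega)^m$. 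Combining proves \eqref{sol 3 -}.

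The only delicate point, as in the nonnegative case, is the trace bookkeeping: that $(T^\infty_a(u^\pm))^\Omega\to (u^\pm)^\Omega$ as $a\to 0^+$ and that $\chi_{\{u>a\}}$ (resp.\ $\chi_{\{u<-a\}}$) has trace $1$ on $\{u>0\}\cap\partial\Omega$ (resp.\ $\{u<0\}\cap\partial\Omega$) for $a$ small. These are exactly the facts supplied by Lemma~\ref{lemma trace pos} and Definition~\ref{defin set bd}; moreover Lemma~\ref{lemma well posed bd} guarantees that $\{u>0\}\cap\partial\Omega$ and $\{u<0\}\cap\partial\Omega$ are $\mathcal{H}^{N-1}$-essentially disjoint, so the two opposite sign conditions on $[z,\nu]$ are compatible. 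With these in hand, the rest is a routine repetition of the computations in Lemma~\ref{l.boundary}.
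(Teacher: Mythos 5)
Your proposal is correct and follows essentially the same route as the paper: the paper likewise proves \eqref{sol 3 +} verbatim as in Lemma \ref{l.boundary} and then establishes \eqref{sol 3 -} by the symmetric computation from \eqref{dis bordo -} (rearranging, using $-T^{-a}_{-\infty}(u)\ge a$ and $[z,\nu]\ge 0$, passing to the limits in $a$ and $q$), with the reverse inequality obtained exactly as you do from $z\chi_{\{u<-a\}}=(-T^{-a}_{-\infty}(u))^m w\chi_{\{u<-a\}}$, \eqref{= al bordo mis} and the trace of $\chi_{\{u<-a\}}$. Your added remarks on the trace bookkeeping via Lemma \ref{lemma trace pos}, Definition \ref{defin set bd} and Lemma \ref{lemma well posed bd} are consistent with what the paper implicitly uses.
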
	
	\begin{proof}
		The proof of \eqref{sol 3 +} is the same of \eqref{sol 3b}. Here  we show \eqref{sol 3 -}.
		\\From \eqref{dis bordo -}, we have that for almost every $a>0$ and for all $q>0$
		\begin{align*}
			\frac{q}{q+1}&\left|(-T^{-a}_{-\infty}(u))^{m(q+1)} - a^{m(q+1)}\right| \le \left((-T^{-a}_{-\infty}(u))^{mq} - a^{mq}\right) \left[z, \nu\right] \\= &\left(\left(-T^{-a}_{-\infty}(u)\right)^{m(q+1)}-\left(-T^{-a}_{-\infty}(u)\right)^m a^{mq} \right)\frac{[z,\nu]}{\left(-T^{-a}_{-\infty}(u)\right)^m} \\ \stackrel{-T^{-a}_{-\infty}(u) \ge a}{\le} &\left(\left(-T^{-a}_{-\infty}(u)\right)^{m(q+1)}-a^{m(q+1)} \right) \frac{[z, \nu]}{\left(-T^{-a}_{-\infty}(u)\right)^m} \quad \text{$\mathcal{H}^{N-1}$-a.e. on $\partial\Omega \cap \{u<0\}$.}
		\end{align*}
		As a consequence, we gain 
		\begin{equation*}
			\frac{q}{q+1} \le \frac{[z,\nu]}{\left(-T^{-a}_{-\infty}(u)\right)^m}  \quad \text{$\mathcal{H}^{N-1}$-a.e. on $\partial\Omega \cap \{u<0\}$.}
		\end{equation*}
		Taking limits as $q$ tends to $\infty$ and $a$ to $0$ in the previous inequality , it follows that 
		\begin{equation}\label{mezza diz -}
			(-u^{\Omega})^m \le [z,\nu] \quad \text{$\mathcal{H}^{N-1}$-a.e. on $\partial \Omega \cap \{u<0\}$.}
		\end{equation}
		On the other hand, similarly to \eqref{equ brack uno}, we know that for almost every $a>0$ 
		$$|[z,\nu]\chi_{\{u<-a\}}|\le \left(-T^{-a}_{-\infty}(u)\right)^m \quad \text{$\mathcal{H}^{N-1}$-a.e. on $\partial\Omega \cap \{u<0\}$.}$$ 
		Letting $a\to 0$ and using the fact that $\chi_{\{u<-a\}}=1$ in $\partial\Omega \cap \{u<0\}$ for $a$ sufficiently small, we get
		$$|[z,\nu]| \le (-u^{\Omega})^m\text{ for all }x\in\partial\Omega \cap \{u<0\}.$$
		Taking this  together with \eqref{mezza diz -}, we conclude that 
		$$
		[z,\nu]=(-u^{\Omega})^{m}\text{ in }\partial \Omega \cap \{u<0\}.
		$$
		This proves the result. \bk
	\end{proof}
	Finally, we prove Theorem \ref{teo f sgn}.
	\begin{proof}[Proof of Theorem \ref{teo f sgn}]
		The proof of Theorem \ref{teo f sgn}  is an immediate  consequence of the previous results.
		\\In Corollary  \ref{lemma ae conv} and Lemma \ref{u reg sgn}, we establish the existence of a function $u \in DTBV(\Omega) \cap L^\infty(\Omega)$. Additionally, in Lemma \ref{l.weaksol sgn}, we demonstrate the existence of a vector field $w \in L^\infty(\Omega)^N$ with $\|w\|_{L^\infty(\Omega)^N} \le 1$ such that $z:=|u|^m w \in \DM(\Omega)$ satisfies \eqref{sol 1 segno}, and in Corollary \ref{c.corototvar sgn}  we proved \eqref{sol 2 +}, \eqref{sol 2 -}.
		\\The boundary condition is satisfied by $u$ in the sense of \eqref{sol 3 +} and \eqref{sol 3 -}, as proven in Lemma \ref{l.boundary con segno}.
		This concludes the proof\end{proof}

	\section{Some explicit  examples and remarks} \label{sec7}
	In this section we  construct some  example of solutions of problem \eqref{prob}. Concerning the first one, we need the following definition. 
	\begin{defin}\label{def calib}
		We say that a bounded convex set $E$ of class $C^{1,1}$ is calibrable if there exists a vector field $\xi\in L^{\infty}(\mathbb{R}^{N})^{N}$ such that $\|\xi\|_{L^{\infty}(\mathbb{R}^{N})^N}\leq1$, $(\xi,D\chi_{E})=|D\chi_{E}|$ as measures and 
		$$
		-\operatorname{div}\xi=\lambda_{E}\chi_{E}\text{ in }\mathcal{D}'(\mathbb{R}^N)
		$$
		for some constant $\lambda_{E}$. In this case, $\lambda_{E}=\frac{Per(E)}{|E|}$ and $[\xi,\nu]=-1\, \mathcal{H}^{N-1}$-a.e in $\partial E$, see \cite{DGOP}.
	\end{defin}
	By \cite[Theorem 9]{acc} a bounded and convex set $E$ is calibrable if and only if 
	$$
	(N-1)\|\mathbf{H}_{E}\|_{L^{\infty}(\partial E)}\leq\lambda_{E}=\frac{Per(E)}{|E|},
	$$
	where $\mathbf{H}_{E}$ denotes the mean curvature of $\partial E$.
	\begin{example}
		Let $\Omega$ be  a calibrable set. We will prove that the function $u=\left(\frac{|\Omega|}{Per(\Omega)}\right)^{\frac1m}$ is a solution of \eqref{prob} in the sense of Definition \ref{def sol}. Indeed, considering the restriction to $\Omega$ of the vector field in Definition \ref{def calib}, that is, $w=\xi|_{\Omega}$, we get
		$$
		-\operatorname{div}\left(\frac{|\Omega|}{Per(\Omega)}w\right)=1\ \ \text{in}\ \Omega.
		$$
		Consequently, the function $u=\left(\frac{|\Omega|}{Per(\Omega)}\right)^{\frac1m}$ solves
		$$
		-\operatorname{div}(u^m w)=1\text{ in }\mathcal{D}'(\Omega)\qquad\text{ and }\quad[u^m w,\nu]=u^m [w,\nu]=-u^m\quad\mathcal{H}^{N-1}-a.e\text{ on }\partial\Omega.
		$$
		Moreover, using \eqref{int per parti} with $z=u^m w$, we get
		\begin{align*}
			\int_{\Omega}(z,DT_{a}^{\infty}(u))&= -\int_{\Omega}T_{a}^{\infty}(u){\rm div} z\bk +\int_{\partial\Omega}T_{a}^{\infty}(u)[z,\nu] \, d\mathcal{H}^{N-1}\\
			&=T_{a}^{\infty}(u)|\Omega|+T_{a}^{\infty}(u)(-u^m)Per(\Omega)\\
			&=T_{a}^{\infty}(u)\left(|\Omega|-u^m Per(\Omega)\right)=0=\frac{1}{m+1}|D (T_{a}^{\infty}(u)^{m+1})|\text{ for all }a>0.
		\end{align*}
		We have thus shown that $u$ is a solution of the torsion  problem related to the nonlinear transparent media  
		$$
		\begin{cases}
			\dis -\operatorname{div}\left(u^m\frac{D u}{|D u|}\right) = 1 & \text{in}\;\Omega,\\
			u=0 & \text{on}\;\partial\Omega,
		\end{cases}
		$$
		in the sense of Definition \ref{def sol}.
	\end{example}
	Our next example deals with smooth  non-constant \bk  radial solutions and illustrates the lack of interplay between the sign of the operator $-\operatorname{div}\left(u^m\frac{Du}{|Du|}\right)$ and the sign of $u$.
	\begin{example} 
		Let $B_{R}$ be the ball of radius $R$ centered at the origin. For $\theta>0$, we define 
		$$
		u(x)=\frac{1}{R^{\theta}}\left(R^{\theta}-|x|^{\theta}\right).
		$$
		We will show that, despite $u$ being positive with $\Delta u\leq0$, the function $-\operatorname{div}\left(u^m \bk \frac{\nabla u}{|\nabla u|}\right)$ changes sign in the interior of $\Omega$. Indeed, a straightforward computation yields 
		$$
		|\nabla u|=\left(\frac{\theta|x|^{\theta-1}}{R^{\theta}}\right),\qquad -\Delta u=\frac{\theta |x|^{\theta-2}}{R^{\theta}}\left(N+\theta-2\right)\qquad-\Delta_1u=\frac{N-1}{|x|}.
		$$
		Consequently, using \eqref{equ comp}
		\begin{align*}
			-\operatorname{div}\left(u^{m}\frac{\nabla u}{|\nabla u|}\right)&=-mu^{m-1}\left(\frac{\theta|x|^{\theta-1}}{R^{\theta}}\right)+u^{m}\frac{(N-1)}{|x|}\\
			&=\frac{u^{m-1}}{R^\theta|x|}(R^{\theta}u(N-1)-m\theta|x|^{\theta})\\
			&=\frac{u^{m-1}}{R^\theta|x|}\left(R^{\theta}(N-1)-|x|^{\theta}(N-1+m\theta)\right).
		\end{align*}
		Thus, we have   that the problem
		$$
		\begin{cases}
			\dis -\operatorname{div}\left(u^{m}\frac{\nabla u}{|\nabla u|}\right) = u^{m-1}f(x) & \text{in}\;B_{R},\\
			u=0 & \text{on}\;\partial B_{R},	
		\end{cases}
		$$
		with
		$$
		f(x)=\frac{R^{\theta}(N-1)-|x|^{\theta}(N-1+m\theta)}{R^\theta|x|},
		$$
		admits a smooth radial solution. In particular, when $m=\theta=R=1$, $u$ solves
		$$
		-\operatorname{div}\left(u\frac{\nabla u}{|\nabla u|}\right)=\frac{N-1}{|x|}-N:=f.
		$$
		We observe that $u$ is positive and  concave   while  $-\operatorname{div}\left(u\frac{\nabla u}{|\nabla u|}\right)\in L^{N,\infty}(B_{1})$, but changes  its sign as it is  negative near the boundary of $B_{1}$. Finally, one can remark that 
		$$
		\widetilde{S}_1\|f\|_{L^{N,\infty}}=\widetilde{S}_1\omega_{N}^{\frac{1}{N}}(N-1) =1,
		$$
		which enhances  the sharpness  of the result in Corollary \ref{lemma ae conv} as $\max_{B_1} u (x) =1$. \bk 
		\bk 
	\end{example}
	\textbf{Data availability:}
	No data was used for the research described in the article.\\
	
	\textbf{Conflict of interest declaration:}
	The authors declare no conflict of interest.\\
	
	\textbf{Acknowledgments:}  F. Balducci, F. Oliva and  F. Petitta are partially supported by the Gruppo Nazionale per l’Analisi Matematica, la Probabilità e le loro Applicazioni (GNAMPA) of the Istituto Nazionale di Alta Matematica (INdAM). M.F. Stapenhorst is partially  supported by FAPESP 2022/15727-2 and 2021/12773-0.


\begin{thebibliography}{}
		\bibitem{acc} F. Alter, V. Caselles and A. Chambolle,  A characterization of convex calibrable sets in $\re^N$,  Math. Ann. 332,  329-366,  (2005)
		\bibitem{AL} A. Alvino, Sulla disuguaglianza di Sobolev in spazi di Lorentz, Boll. Un. Mat. Ital. 14, 3-11, (1977)
		
		\bibitem{ACM} L. Ambrosio, G. Crippa, S. Maniglia, Traces and fine properties of a $ BD $ class of vector fields and applications, Ann. Fac. Sci. Toulouse Math. 14 (4), 527-561, (2005)
		
		\bibitem{AFP} L. Ambrosio, N. Fusco, D. Pallara, Functions of bounded variation and free discontinuity problems, Courier Corporation, (2000)
		
		\bibitem{acm2001} F. Andreu, C. Ballester, V. Caselles, J.M. Maz\'on, The {D}irechlet problem for the total variation flow, J. Funct. Anal. 180 (2), 347-403, (2001)
		
		
		
		\bibitem{AndreuMazonMollCaselles2004} F. Andreu, J.M. Maz\'on, S. Moll, V. Caselles, The minimizing total variation flow with measure initial conditions, Comm. Contemp. Math. 6 (3), 431-494, (2004)
		
		\bibitem{acmNA} F. Andreu, V. Caselles, J.M. Mazón, A strongly degenerate quasilinear elliptic equation, Nonlinear Anal. 61 637-669, (2005) 

\bibitem{acmJEMS} F. Andreu, V. Caselles, J.M. Mazón, The Cauchy problem for a strongly degenerate quasilinear equation, J. Eur. Math. Soc.
(JEMS) 7 361-393, (2005) 
		
		\bibitem{acmMA} F. Andreu, V. Caselles, J. M. Mazón and S. Moll, The Dirichlet problem associated to the relativistic heat equation, Math. Ann. 347(1) 135-199,  (2010) 
		
		\bibitem{ACMMevo} F. Andreu, V. Caselles, J. M. Maz\'on, J.S. Moll, A diffusion equation in transparent media, J. Evolution Equations 7(1), 113-143, (2007)
		
		\bibitem{AndreuCasellesMazon2004} F. Andreu, V. Caselles, J.M. Maz\'on, Parabolic quasilinear equations minimizing linear growth functionals, Progress in mathematics 223, Birkh\"auser Verlag, Basel, (2004)
		
		\bibitem{ads} F. Andreu, A. Dall’Aglio, S. Segura de Le\'on, Bounded solutions to the 1-Laplacian equation with a critical gradient term, Asymptot. Anal. 80 (1-2), 21-43, (2012)
		
		\bibitem{A} G. Anzellotti, Pairings between measures and bounded functions and compensated compactness, Ann. Mat. Pura Appl. 135, 293-318, (1983)
		
		\bibitem{AV} A. Audrito, J. L. V\'azquez, The Fisher-KPP problem with doubly nonlinear diffusion, J. Diff. Eqns. 263 (11), 7647-7708, (2017)
		
		\bibitem{BOP} F. Balducci, F. Oliva, F. Petitta, Finite energy solutions for nonlinear elliptic equations with competing gradient, singular and $L^1$ terms, J. Diff Eqns. 391, 334-369, (2024)
		
		\bibitem{BCRS} M. Bertalmio, V. Caselles, B. Roug\'e, A. Sol\'e, TV based image restoration with local constraints, Special issue in honor of the sixtieth birthday of Stanley Osher, J. Sci. Comput. 19 (1-3), 95-122, (2003)
		
		\bibitem{B} F. E. Browder, Pseudo-monotone operators and nonlinear elliptic boundary value problems on unbounded domains, Proc. Nat. Acad. Sci. USA 74 (7), 2659-2661, (1977)
		
		
		\bibitem{calvo} J. Calvo, J. Campos, V. Caselles, O. Sánchez and J. Soler, 
Flux-saturated porous media equations and applications. EMS Surv. Math. Sci 2 131-218, (2015)
		
		\bibitem{CF} G.-Q. Chen, H. Frid, Divergence-measure fields and hyperbolic conservation laws, Arch. Ration. Mech. Anal. 147, 89-118, (1999)
		
		\bibitem{CT} M. Cicalese, C. Trombetti, Asymptotic behaviour of solutions to $p$‐Laplacian equation, Asymp. Anal. 35 (1), 27-40, (2003)
		
		\bibitem{CDC} G. Crasta, V. De Cicco, Anzellotti's pairing theory and the Gauss–Green theorem, Adv. Math. 343, 935-970, (2019)
		
		\bibitem{dass} A. Dall’Aglio, S. Segura de Le\'on, Bounded solutions to the $1$-Laplacian equation with a total variation term, Ric. Mat. 68 (2), 597-614, (2019)
		
		\bibitem{DGOP} V. De Cicco, D. Giachetti, F. Oliva, F. Petitta, The Dirichlet problem for singular elliptic equations with general nonlinearities, Calculus of Variations and Partial Differential Equations, 58, 1-40, (2019)
		
		\bibitem{Demengel2005} F. Demengel, Some existence results for noncoercive "1-Laplacian" operator, Asymptot. Anal. 43 (4), 287-322, (2005)
		
		\bibitem{GOP} D. Giachetti, F. Oliva, F. Petitta, 1-Laplacian type problems with strongly singular nonlinearities and gradient terms, Commun. Contemp. Math. 24 (10), Paper No. 2150081, 40 pp, (2022)
		
		\bibitem{GMP} L. Giacomelli, S. Moll, F. Petitta, Nonlinear diffusion in transparent media: the resolvent equation, Adv. Calc. Var. 11, 405-432, (2018)
		
		\bibitem{K} B. Kawohl, On a family of torsional creep problems, J. Reine Angew. Math. 410, 1-22, (1990)
		
		\bibitem{ka} B. Kawohl, From $p$-Laplace to mean curvature operator and related questions, In: Progress in Partial Differential Equations: the Metz Surveys, Pitman Res. Notes Math. Ser. 249, 40-56, (1991)
		
		\bibitem{lops} M. Latorre, F. Oliva, F. Petitta, S. Segura de Le\'{o}n, The Dirichlet problem for the 1-Laplacian with a general singular term and $L^1$-data, Nonlinearity 34 (3), 1791-1816, (2021)
		
		\bibitem{LTS} M. Latorre, S. Segura de León, Elliptic equations involving the $1-$Laplacian and a total variation term with $L^{N,\infty}$-data, Atti Accad. Naz. Lincei Rend. Lincei Mat. Appl. 28 (4), 817-859, (2017)
		
		\bibitem{ON} R. O'Neil, Integral transforms and tensor products on Orlicz spaces and $L^{p,q}$ spaces, J. Anal. Math. 21, 1-276, (1968)
		
		\bibitem{MST2009} A. Mercaldo, S. Segura de Le\'{o}n, C. Trombetti, On the solutions to 1-{L}aplacian equation with $L^1$ data, J. Funct. Anal. 256 (8), 2387-2416, (2009)
		
		\bibitem{MST1} A. Mercaldo, S. Segura de Le\'on, C. Trombetti, On the behaviour of the solutions to $p-$Laplacian equations as $p$ goes to $1$, Publ. Mat. 52, 377-411, (2008)
		
		\bibitem{M} Y. Meyer, Oscillating Patterns in Image Processing and Nonlinear Evolution Equations: The Fifteenth Dean Jacqueline B. Lewis Memorial Lectures, American Mathematical Society, Providence, RI, (2001)
		
		\bibitem{O} F. Oliva, Regularizing effect of absorption terms in singular problems, J. Math. Anal. Appl. 472 (1), 1136-1161, (2019)
		
		\bibitem{OPS} F. Oliva, F. Petitta, S. Segura de Le\'on, The role of absorption terms in Dirichlet problems for the prescribed mean curvature equation, Nonlin. Diff. Eqns. and Appl. 31 (4), 53, (2024)
		
		\bibitem{OsSe} S. Osher, J. Sethian, Fronts propagating with curvature-dependent speed: algorithms based on Hamilton-Jacobi formulations, Journal of Computational Physics, 79 (1), 12-49, (1998)
		
		\bibitem{ros} P. Rosenau, Tempered diffusion: A transport process with propagating front and inertial delay, Phys. Rev. A 146, 7371-7374, (1992)
		
		\bibitem{ROF} L.I. Rudin, S. Osher, E. Fatemi, Nonlinear total variation based noise removal algorithms, Physica D 60, 259-268, (1992)
		
		\bibitem{Sapiro} G. Sapiro, Geometric partial differential equations and image analysis, Cambridge University Press, (2001)
		
		\bibitem{S} G. Stampacchia, Le probl\`eme de Dirichlet pour les \'equations elliptiques du second ordre \`a coefficients discontinus, Ann. Inst. Fourier Grenoble 15, 189-258, (1965)
		
		\bibitem{Z} W. P. Ziemer, Weakly Differentiable Functions, Springer, (1989)
		
	\end{thebibliography}
\end{document}